\numberwithin{equation}{section}
\newtheorem{theorem}{Theorem}[section]
\newtheorem{lemma}[theorem]{Lemma}
\newtheorem{proposition}[theorem]{Proposition}
\newtheorem{remark}[theorem]{Remark}
\newcommand{\rob}{\textnormal{\tiny Rob}}
\newcommand{\Oas}{O_{\textnormal{a.s.}}}
\newcommand{\oas}{o_{\textnormal{a.s.}}}
\newcommand{\eps}{\epsilon}
\newcommand{\muhat}{\widehat{\mu}}
\newcommand{\sigmahat}{\widehat{\sigma}}
\newcommand{\Xbar}{\overline{X}}
\newcommand{\E}{\mathbb{E}}
\newcommand{\calF}{\mathcal{F}}
\newcommand{\hatX}{\widehat{X}}
\newcommand{\Xhat}{\widehat{X}}
\newcommand{\hatV}{\widehat{V}}
\renewcommand{\d}{\textnormal{d}}
\renewcommand{\Re}{\mathbb{R}}
\newcommand{\calP}{\mathcal{P}}
\newcommand{\calS}{\mathcal{S}}
\newcommand{\ber}{\textnormal{Ber}}
\newcommand{\xmark}{\ding{55}}%
\newcommand{\Tr}{\textnormal{tr}}
\renewcommand{\b}[1]{\mathbf{#1}}
\newcommand{\bX}{\b{X}}
\newcommand{\bS}{\b{S}}
\newcommand{\bM}{\b{M}}
\newcommand{\bV}{\b{V}}
\newcommand{\bhatX}{\b{\Xhat}}
\newcommand{\bXbar}{\b{\Xbar}}
\newcommand{\mix}{{\textnormal{mix}}}
\DeclareMathOperator{\erf}{\textnormal{erf}}
\newcommand{\apx}{{\textnormal{apx}}}
\newcommand{\wsr}{{\textnormal{\tiny WSR}}}
\newcommand{\prpl}{\textnormal{\tiny WSR}}
\newcommand{\unif}{\textnormal{unif}}
\newcommand{\stitch}{\textnormal{stch}}
\newcommand{\hrms}{\textnormal{\tiny HRMS}}
\newcommand{\mat}{\textnormal{\tiny mat}}
\newcommand{\hoeff}{\textnormal{\tiny Hoeff}}
\newcommand{\obmix}{\textnormal{\tiny Bern}}
\newcommand{\bern}{\textnormal{\tiny Bern}}
\newcommand{\obstitch}{\textnormal{\tiny Bern-Stch}}
\title{Closed-form empirical Bernstein confidence \\ sequences for scalars and matrices}
\author[1]{Ben Chugg}
\author[1]{Aaditya Ramdas}
\affil[1]{Departments of Machine Learning and Statistics, CMU}
\affil[ ]{\texttt{\{benchugg, aramdas\}@cmu.edu}}
\date{December 2025}
\begin{document}

\maketitle

\begin{abstract}
    We derive a new closed-form variance-adaptive confidence sequence (CS) for estimating the average conditional mean of a sequence of bounded random variables. 
    Empirically, it yields the tightest closed-form CS we have found for tracking time-varying means, across sample sizes up to $\approx 10^6$.
    When the observations happen to have the same conditional mean, our CS is asymptotically tighter than the recent closed-form CS of \citet{waudby2024estimating}. It also has other desirable properties: it is centered at the unweighted sample mean and has limiting width (multiplied by $\sqrt{t/\log t}$) independent of the significance level.  We extend our results to provide a CS with the same properties for random matrices with bounded eigenvalues. 
\end{abstract}

\section{Introduction}

Consider a stream of random variables $(X_t)_{t\geq 1}$ lying in $[0,1]$ with average conditional means $\mu_t = t^{-1}\sum_{j\leq t} \E[X_j |X_1,\dots,X_{j-1}]$. We are interested in obtaining nonasymptotic, time-uniform confidence sequences (CSs) for $(\mu_t)_{t \geq 1}$. Given a parameter $\alpha\in(0,1)$ and an initial peeking time $t_0$, a $(1-\alpha)$-CS for $(\mu_t)_{t \geq t_0}$ is a sequence of sets $(C_t)_{t\geq t_0}$ such that 
\begin{equation}
    P( \mu_t\in C_t\text{ simultaneously for all } t\geq t_0)\geq 1-\alpha,
\end{equation}
or, equivalently, $P(\mu_\tau\in C_\tau)\geq 1-\alpha$ for all stopping times $\tau\geq t_0$. The study of confidence sequences goes back at least to \citet{paulson1964sequential} and \citet{darling1967confidence}, who both gave results for  (sub-)Gaussian distributions. CSs are now applied in a wide range of sequential and online settings, and their study has blossomed in recent years.

Confidence sequences for bounded random variables is an important special case. First, many applications naturally yield bounded data. Second, boundedness opens the door to variance-adaptive bounds, which require no distributional knowledge beyond the range itself. This is in contrast to unbounded data, where nonasymptotic CSs require a priori knowledge of an upper bound on some moment- or tail-type quantity of the distribution~\citep{bahadur1956nonexistence}. For sub-Gaussian data one needs the sub-Gaussian variance proxy; for data with finite $p$-th moment one needs a bound on this moment, and so on. In many settings such information is unavailable or unrealistic.

For bounded data meanwhile, one can provide CSs which adapt to the unknown variance over time. Such bounds aim to approach the width of the oracle Bernstein bound (or rather its time-uniform version in our case; see \citep{howard2020time,howard2021time}) which assumes knowledge of the true variance. Such variance-adaptive CSs have come to be known as \emph{empirical} Bernstein CSs.

Empirical Bernstein bounds have enjoyed a tremendous amount of recent attention (e.g., \citep{howard2021time,waudby2024estimating, martinez2024empirical,wang2024sharp,voravcek2025star,whitehouse2025time,chugg2025variational,orabona2023tight}), owing both to their wide applicability and exciting new techniques for constructing them. A longer discussion on related work is held in Appendix~\ref{sec:related-work}, beginning with the original (fixed-time) bounds of \citet{audibert2009exploration} and \citet{maurer2009empirical} which kick-started this research program. 

This article is focused on developing \underline{closed-form} CSs \emph{without assuming a constant conditional mean}. We focus on closed-form bounds because they are straightforward to implement and allow for an asymptotic analysis of their widths. We allow for a non-constant conditional mean because most past work has required a shared mean, yet in many applications the mean drifts over time.

Our main points of comparison will be the empirical Bernstein CS of \citet{waudby2024estimating}, $C_t^\prpl$, which is empirically the tightest closed-form bound in the literature, and 
that of \citet{howard2021time}, $C_t^\hrms$, which has an optimal $\sqrt{\log \log t/t}$ rate of decay.  
We give more detail on these two bounds in Section~\ref{sec:sota}.

\paragraph{Main results.} 
Theorem~\ref{thm:closed-form} presents a new closed-form empirical Bernstein CS, $C_t^\apx$ which has several appealing properties. 
First, unlike $C_t^\wsr$, it tracks a time-varying conditional mean.  That is, we do not require that $\mu_t =\mu_{t-1}$, thus handling observations that are neither identically distributed nor independent.  Second, also unlike the bound of \citet{waudby2024estimating}, it is  centered at the unweighted sample mean $\Xbar_t = t^{-1}\sum_{i\leq t}X_i$.

One motivation for our work is the following observation: When the data are iid with mean $\mu$ and variance $\sigma^2$, the \emph{oracle} Bernstein CS\footnote{The word oracle refers to the fact that one must know $\sigma$ to implement this CS. The  oracle Bernstein CS of \citet{howard2021time} is one based on conjugate mixtures has width $O(\sqrt{\log t/t})$. We discuss iterated logarithm bounds later on.} of \citet{howard2021time} has asymptotic halfwidth $W_t^\bern$ that satisfies
\begin{equation}
W_t^\bern \sim  \sigma \sqrt{\frac{\log(t)}{t}} \quad \text{~ or equivalently, ~}\quad 
\sqrt\frac{t}{\log t}W_t^\bern \xrightarrow{a.s.} \sigma \text{ as } t \to \infty.    
\end{equation}
See Appendix~\ref{sec:oracle-bernstein} for details. 
%We use a $\sim$ notation to succinctly write the above as
Robbins' famous mixture CS for $\sigma$-sub-Gaussian observations also has the same asymptotic halfwidth (Appendix~\ref{sec:robbins-mixture}). Both of these closed-form CSs require knowledge of  $\sigma$. A natural question is thus whether we can design a closed-form fully empirical Bernstein CS which also has limiting width equal to $\sigma$, a property we call \emph{asymptotic sharpness}, following the use of this terminology for empirical Bernstein confidence \emph{intervals}~\citep{martinez2024empirical,wang2024sharp}. 

Our bound resolves this question up to constants. To elaborate, 
Lemma~\ref{lem:precise-width} shows that, when the data are iid as above, then $W_t^\apx$, the halfwidth of $C_t^\apx$, satisfies 
\begin{equation}
\label{eq:apx-width}
W_t^\apx \sim \sqrt{\frac{2\E[\psi_E(|X-\mu|)]\log(t)}{t}},
%\text{~ and ~}
\end{equation}
where $\psi_E(x) = -\log(1-x) -x$. 
% That is, $\lim_{t\to\infty} \sqrt{t/\log(t)} W_t^\apx = \sqrt{2\E[\psi_E(|X- \mu|)}$. 
Since $\psi_E(x) \geq x^2/2$, with $\psi_E(x) / (x^2/2) \to 1$ as $x \downarrow 0$, we see that
$\E[\psi_E(|X- \mu|)] \geq \sigma^2/2,$
but the two sides are almost identical for small $\sigma$ (see Table~\ref{tab:EpsiE}). More precisely, their difference is  $O(|X-\mu|^3)$, as seen by the Taylor expansion of $-\log(1-x)$ around $x=0$. (This is reminiscent of the leading term of the Berry-Esseen bound, and if the distribution is not too skewed, then it is $O(\sigma^3)$, but if the distribution is very skewed, it is $O(\sigma^2)$.) Further, one can show that $\E[\psi_E(|X-\mu|]\leq C_\mu \sigma^2$ for a constant $C_\mu$ depending on $\mu$. See Appendix~\ref{app:psiE-vs-sigma} for more detail. 
% Thus, we write
% \[
% \E[\psi_E(|X- \mu|) = \frac{\sigma^2}{2}(1 + o_\sigma(1)),
% \]
% where the $o_\sigma(1)$ term vanishes as $\sigma \to 0$.
% \begin{equation}
% \label{eq:apx-width-2}
% W_t^\apx \sim (1+o_\sigma(1)) \cdot \sigma \sqrt{\frac{\log(t)}{t}},
% %\text{~ and ~}
% \end{equation}

% Despite having a slightly larger leading constant than $\sigma$, as a function of $t$ the width of $C_t^\apx$ decays at the same $O(\sqrt{\log(t)/t})$ rate as the oracle Bernstein bound and Robbins' mixture. 
It is worth remarking that the above scaling is also independent of $\alpha$. Considering the dependence on both $t$ and $\alpha$, $C_t^\apx$ differs from the empirical Bernstein CS of \citet{waudby2024estimating}, whose halfwidth $W_t^\prpl$ satisfies
\begin{equation}
        W_t^\prpl \sim \frac{\sigma}{2} \sqrt{\frac{\log(2/\alpha) \log t}{2t}} \log\log t, 
\end{equation}
thus implying that $C_t^\apx$ is asymptotically tighter as $t \to \infty$, and their gap increases for smaller $\alpha$. 
% Intriguingly, despite being asymptotically sharp as a fixed-time confidence interval, $C_t^\prpl$ is not asymptotically sharp as a CS. 

%Third, when the data have a constant conditional mean, $C_t^\apx$ is provably asymptotically tighter than $C_t^\prpl$. In particular, Thus, $C_t^\apx$ shrinks at rate $\sqrt{\log(t)/t}$ and $C_t^\prpl$ at rate $\sqrt{\log(t)/t}\cdot \log\log(t)$. Moreover, the width of $C_t^\apx$ scales independently of the significance level $\alpha$ in the limit. 

Empirically, $C_t^\apx$ is tighter than $C_t^\prpl$ at large sample sizes (starting at roughly 500-1000 observations, depending on the distribution), but looser at small sample sizes. It is tighter than $C_t^\hrms$ except at extremely large sample sizes. This is to be expected: As we'll discuss below, $C_t^\hrms$ was engineered to have optimal rates with respect to $t$, thereby sacrificing some of its tightness at moderate sample sizes. 
%They perform remarkably similarly to $C_t^\hrms$ but are much faster to compute (being closed-form). Indeed, computing the CS from time 1 to 10,000 for 10,000 iid Bernoulli observations on a Macbook Pro 15,  $C_t^\apx$ takes $\approx 0.13$ seconds, $C_t^\mix$ takes $\approx 3.08$ seconds, and $C_t^\hrms$ takes $\approx 94.15$ seconds.\footnote{To compute the CSs of \citet{waudby2024estimating} and \citet{howard2021time} we use the \texttt{confseq} python package~\citep{howardconfseq}.} 

As illustrated by~\eqref{eq:apx-width}, the width of $C_t^\apx$ decays at rate $O(\sqrt{\log(t)/t})$. Using a now well-known approach called stitching~\citep{howard2021time}, Theorem~\ref{thm:stitching} provides a variant, $C_t^\stitch$, which decays at the (optimal) iterated logarithm rate of $O(\sqrt{\log\log(t)/t})$ instead. This matches the rate of $C_t^\hrms$. However, as is often the case with stitching, 
the benefit of the second bound is only visible at rather large sample sizes, potentially larger than one may encounter in typical experiments.

In terms of their limiting behavior, we show that 
\begin{equation}
W_t^\stitch \sim 2\sqrt{\frac{\E[\psi_E(|X - \mu|)] \log\log t}{t}} \text{~ while ~} W_t^\hrms \sim \sqrt{\frac{2 \sigma^2 \log\log t}{t}}, 
\end{equation}
where $W_t^\stitch$ is the halfwidth of $C_t^\stitch$ and $W_t^\hrms$ that of $C_t^\hrms$. 
Thus, $C_t^\hrms$ is always tighter in the limit, though $C_t^\stitch$ typically outperforms it at smaller sample sizes. Note that $W_t^\hrms$ matches the law of the iterated logarithm~\citep{darling1967confidence,stout1970hartman} in terms of constants. In other words, for CSs which decay at rate $O(\sqrt{\log\log(t)/t})$, the optimal limiting width (when multiplid by $\sqrt{\log\log(t)/t}$) is $\sqrt{2\sigma^2}$. Thus, the notion of being `asymptotically sharp' differ for CSs which decay at rate $O(\sqrt{\log(t)/t})$ (for which limiting width $\sigma$ is achievable) and those that decay and iterated logarithm rates. 
%We give an overview of the limiting widths of the various bounds discussed in this paper in Appendix~\ref{sec:limiting-widths}. 

Table~\ref{tab:limiting-widths} summarizes the limiting widths of the bounds (for scalars) discussed throughout this paper. The bounds $C_t^\obmix$ and $C_t^\obstitch$ are the oracle Bernstein bounds discussed in Appendix~\ref{sec:oracle-bernstein} and $C_t^\rob$ is Robbins' mixture discussed in Appendix~\ref{sec:robbins-mixture}. The remaining of the bounds are discussed in the main paper. 

\begin{table}[h!]
    \centering
    \small 
    \begin{tabular}{c|c|c|c}
    \emph{Closed-form CS} & \emph{Setting} & $\sigma$-known? & \emph{Asymptotic Width} \\ \hline 
    $C_t^\apx$ (Thm~\ref{thm:closed-form}) & Bounded & \xmark & $\sqrt{\frac{ 2\E[\psi_E(|X - \mu|)]\log t}{t}}$  \\ 
    $C_t^\stitch$ (Thm~\ref{thm:stitching}) & Bounded &\xmark & $\sqrt{\frac{4\E[\psi_E(|X - \mu|)] \log\log t}{t}}$ \\
    $C_t^\prpl$ \citep[Thm 2]{waudby2024estimating}  & Bounded & \xmark & $\sqrt{\frac{\sigma^2\log(2/\alpha)\log t}{8\cdot t}}\log\log t$ \\ 
    $C_t^\prpl$ \citep[Thm 2]{waudby2024estimating}  & Bounded & \xmark & $\sqrt{\frac{\sigma^2\log t}{8\cdot t}}\log\log t$ \\ 
    $C_t^\hrms$ \citep[Thm 1]{howard2021time} & Bounded & \xmark & $\sqrt{\frac{2\sigma^2 \log\log t}{t}}$ \\ 
    $C_t^\hoeff$ \citep[Prop 1]{waudby2024estimating}  & $\sigma$-sub-Gaussian &\checkmark & $\sqrt{\frac{\sigma^2\log(2/\alpha)\log t}{8\cdot t}}\log\log t$ \\
    $C_t^\hoeff$ \citep[Prop 1]{waudby2024estimating}  & $\sigma$-sub-Gaussian &\checkmark & $\sqrt{\frac{\sigma^2\log t}{8\cdot t}}\log\log t$ \\
    $C_t^\rob$ \citep{robbins1970statistical}, App~\ref{sec:robbins-mixture} & $\sigma$-sub-Gaussian & \checkmark &$\sqrt{\frac{\sigma^2\log(t)}{t}}$ \\
    $C_t^{\obmix}$ \citep[Prop 9]{howard2021time} & Bernstein, $c=1$ & \checkmark & $\sqrt{\frac{\sigma^2\log(t)}{t}}$ \\ 
    $C_t^{\obstitch}$ \citep[Thm 1]{howard2021time}& Bernstein, $c=1$ &\checkmark & $\sqrt{\frac{2\sigma^2\log\log t}{t}}$\\
    \end{tabular}
    \caption{Summary of the limiting widths of the bounds discussed in this paper. The data are assumed to be iid with mean $\mu$ and variance $\sigma^2$ in all cases except for $C_t^\rob$ and $C_t^\hoeff$, in which case $\sigma$ is the sub-Gaussian variance proxy. `Bounded' refers to bounded in $[0,1]$. $C_t^\obmix$ and $C_t^\obstitch$ are the two oracle Bernstein bounds of \citet{howard2021time}, discussed in further detail in Appendix~\ref{sec:oracle-bernstein}. 
    $C_t^\hoeff$ is a very mild generalization of \citep[Prop 1]{waudby2024estimating}; see Appendix~\ref{sec:hoeff}. We include $C_t^\prpl$ and $C_t^\hoeff$ twice as the limiting width depends on the choice of parameters; see Appendices~\ref{app:wsr-width} and \ref{sec:hoeff} for longer discussions. 
}
    \label{tab:limiting-widths}
\end{table}

We also extend our results to the matrix case, providing a CS for the maximum eigenvalue of a stream of symmetric $d\times d$ matrices with eigenvalues in $[0,1]$. This CS has similar properties to $C_t^\apx$: It handles time-varying conditional means and scales independently of $\alpha$ asymptotically. We compare our result with the recent matrix empirical Bernstein CS  of \citet{wang2024sharp}. 

Let us briefly describe the technical details behind our methodology. Many recent empirical Bernstein bounds are derived based on a bivariate inequality given by \citet{howard2021time}, itself an extension of an inequality of \citet{fan2015exponential}. Our approach is based on an extension of the latter, which leads to a version the former in which the role of the two variables are ``flipped.'' This small change opens the door to different techniques for optimizing the parameters of the bound and, in particular, allowing us to more easily leverage the method of mixtures---a technique pioneered by \citet{wald1947sequential} and \citet{robbins1970statistical}. 

\paragraph{Contributions.} To summarize the above discussion, we might enumerate our contributions as follows: 
\begin{enumerate}
    \item We present a new empirical Bernstein CS $C_t^\apx$ for bounded random variables which tracks a time-varying mean. Empirically, $C_t^\apx$ is the tightest (up to reasonable sample sizes) such CS that we've found. 
    \item We analyze its limiting width, showing that $C_t^\apx$ is asymptotically tighter than the closed-form CS of \citet{waudby2024estimating}. We also analyze the limiting width of several other popular CSs---see Table~\ref{tab:limiting-widths}---providing an analysis which is thus far mostly absent from the literature but that we believe is important for understanding the behavior of these CSs. 
    \item We extend our results in two ways by (i) providing a version of $C_t^\apx$ which decays at an iterated logarithm rate (theoretically optimal but often empirically less powerful), and (ii) providing a CS for the maximum eigenvalue of symmetric matrices. 
\end{enumerate}

\paragraph{Outline.} 
First we present the bounds of \citet{waudby2024estimating} and \citet{howard2021time}. 
The treatment of our modification of Fan's and Howard's inequalities is then given in Section~\ref{sec:new-nsm}. A new family of bounds based on these modifications is presented in Section~\ref{sec:mom} along with experimental and theoretical results. In particular, Section~\ref{sec:closed-form} provides our closed-form bound $C_t^\apx$ and Section~\ref{sec:asymp-width} analyzes its asymptotic width when assuming that the data are iid. 
Section~\ref{sec:stitching} then provides a CS that decays at rate $O(\sqrt{\log\log(t)/t}$ instead of $O(\sqrt{\log(t)/t}$. 
Section~\ref{sec:matrix} extends these results to the matrix setting.

\subsection{State-of-the-art closed-form CSs}
\label{sec:sota}
The closed-form CS of \citet{waudby2024estimating} reads as follows. Consider a sequence $(X_t)_{t\geq 1}$ of random variables drawn from some unknown distribution $P$ over $[0,1]^\infty$ with conditional mean $\mu$. (Note that they require that $\mu$ does not change with time; our new methods will omit this requirement.) Let $(\lambda_t)$ be a predictable\footnote{That is, $\lambda_t$ is computable given $X_1,\dots,X_{t-1}$, i.e.\ $\lambda_t$ is $\calF_{t-1}=\sigma(X_1,\dots,X_{t-1})$ measurable.} sequence of real values in $(0,1)$ and $(\hatX_t)$ a predictable sequence in $[0,1]$. 
Then,  for all $\alpha\in(0,1)$, $P(\forall t\geq 1: \mu \in C_t^\prpl) \geq 1-\alpha$, where 
\begin{equation}
\label{eq:wsr-prpl}
    C_t^\prpl := \bigg(\frac{\sum_{i\leq t} \lambda_iX_i}{\sum_{i\leq t}\lambda_i} \pm \frac{\log(2/\alpha) + \sum_{i\leq t} (X_i - \hatX_i)^2 \psi_E(\lambda_i)}{\sum_{i\leq t} \lambda_i}\bigg), 
\end{equation}
and $\psi_E(\lambda) = -\log(1-\lambda)-\lambda$ is the CGF of a centered unit-rate negative-exponential random variable~\citep{howard2020time}. 
While the bound holds for any predictable process $(\Xhat_t)$, \citet{waudby2024estimating} opt for $\Xhat_{t} = t^{-1}( 1/2 + \sum_{i< t}X_i)$, $t\geq 1$, as the (smoothed) empirical mean.
We will discuss later how $(\lambda_t)$ should be set. 

% The second CS of \citet{waudby2024estimating} is motivated by the framework of `testing by betting'~\citep{shafer2021testing}. Consider a process $\calK_t(m,\lambda_1^t) = \prod_{i\leq t} ( 1 + \lambda_k(X_k - m))$ for a predictable sequence $(\lambda_t)$ where $\lambda_k \in [-1/(1-m), 1/m]$. This is a martingale for $m=\mu$, implying that $C_t = \{m\in[0,1]: \calK_t(m) < 1/\alpha\}$ is a $(1-\alpha)$-CS for $\mu$. (Again, $\mu$ is constant.)

% The process $(\calK_t(m,\lambda))$ can be seen as the amount of accumulated capital of a gambler who is betting against the mean being $m$. When  $m\neq \mu$, a clever betting strategy (i.e., choice of $\lambda_t$) can generate exponential wealth, whereas when $m=\mu$ no betting strategy can ever accumulate capital $1/\alpha$ with probability more than $1-\alpha$ by Ville's inequality. See \citet{ramdas2023game} for more discussion of game-theoretic techniques for statistical inference. 

% Waudby-Smith and Ramdas form their tightest CS by running two capital processes simultaneously, one optimizing for $m<\mu$ and the other for $m>\mu$. Any convex combination of these two processes remains a martingale, and upper bounds their maximum. Thus, for $\theta\in(0,1)$, they take 
% \begin{equation}
% \label{eq:wsr-betting}
%     C_t^\bet := \left\{m\in[0,1]: \max\left\{\theta \calK^t(m,\lambda_1^t), (1-\theta)\calK_t(m,-\lambda_1^t)\right\}<1/\alpha\right\}.
% \end{equation}
% This CS cannot be solved in closed-form but it provably forms an interval, and thus may be quickly approximated. 

Meanwhile,  the closed-form bound of \citet{howard2021time} allows for a time-varying average conditional mean $\mu_t$. Let $\hatV_t =(\sum_{i\leq t}(X_i - \Xhat_i)^2) \vee 1$, where $(\Xhat_t)$ is once again a predictable sequence taking values in $[0,1]$. For any $\eta>1$, $s>1$, define 
\[H_t := s\log\log(\eta \hatV_t) + \log\left(\frac{2\zeta(s)}{\alpha \log^s(\eta)}\right),\] 
where $\zeta(s)=\sum_{k=1}^\infty 1/k^s$ is the Riemann zeta function. 
Then $P(\forall t\geq 1: \mu_t\in C_t^\hrms) \geq 1-\alpha$, where 
\begin{equation}
\label{eq:hrms}
    C_t^\hrms = \left(\Xhat_t \pm \frac{k_1(\hatV_t H_t)^{1/2} + k_2 H_t}{t}\right),
\end{equation}
and $k_1 = (\eta^{1/4} + \eta^{-1/4}) / \sqrt{2}$, $k_2 = (\sqrt{\eta}+1)/2$. We have omitted a few of the parameters in the most general version the CS of \citet{howard2021time} for clarity,  but have chosen them as suggested to get the best results. See \citet[Theorem 1]{howard2021time} for the most general statement. See also \citet[Equation (24)]{howard2021time} for an optimized empirical Bernstein CS for $\alpha=0.05$, which we will use in some of our experiments below.

\subsection{Notation and conventions}
\label{sec:prelims}

Throughout, we let $(X_t)_{t\geq 1}$ be a sequence of random variables taking values in $[0,1]$. We let $\mu_t = t^{-1}\sum_{j\leq t} \E[X_j |\calF_{j-1}]$ be the average conditional mean at time $t$, where $\calF_j = \sigma(X_1,\dots,X_j)$ is the $\sigma$-algebra generated by the first $j$ observations.  
Our results place no assumptions on $\mu_t$ other than measurability. That is, we require that $\E[X_t|\calF_{t-1}]$ be defined for each $t\geq 1$. 
Of course, this encapsulates the case when the observations are iid or have a constant conditional mean. 
%Let $\calP_\mu$ be the set of distributions over sequences of observations $(X_t)_{t=1}^\infty\subset[0,1]$ with conditional mean $\mu$, i.e., $\E_{t-1}[X_t] \equiv \E[X_t | \calF_{t-1}] = \mu$ for all $t\geq 1$, where $\calF_t = \sigma(X_1,\dots,X_t)$ is the information available at time $t$. Requiring a constant conditional mean is strictly weaker than the iid assumption. Such dependence structure is important in many applications, such as online learning and sequential A/B testing. 

%For $\alpha\in(0,1)$ and a prespecified sample size $n$, a $(1-\alpha)$-CI is a  $\calF_n$-measurable set $C_n$ which satisfies $P(\mu \in C_n)\geq 1-\alpha$. A $(1-\alpha)$-CS is a sequence of sets $(C_t)_{t\geq 1}$ such that $C_t$ is $\calF_{t}$-measurable and $P(\forall t\geq 1: \mu \in C_t)\geq 1-\alpha$. That is, a confidence sequence is the time-uniform equivalent of a confidence interval.  In this work we are focused on CSs. 

For a set of distributions $\calP$, a $\calP$-supermartingale is a stochastic process $(M_t)$ that is adapted to $(\calF_t)$ (i.e., $M_t$ is $\calF_t$-measurable) and $\sup_{P\in\calP}\E_P[M_t|\calF_{t-1}]\equiv \E_\calP[M_t |\calF_{t-1}] \leq M_{t-1}$.  
Nonnegative supermartingales lie at the heart of constructing confidence sequences due to Ville's inequality~\citep{ville1939etude}, which is a time-uniform version of Markov's inequality. 
Ville's inequality states that for a nonnegative $\calP$-supermartingale $(M_t)$, $\sup_{P\in\calP}P(\exists t\geq t_0: M_t\geq 1/\alpha)\leq \alpha\E_\calP[M_{t_0}]$ for any $\alpha\in(0,1)$ and $t_0$. A common strategy for constructing CSs for a parameter $\theta_t$ is thus to find a nonnegative $\calP$-supermartingale $(N_t)$ where (i) $\calP$ is the set of distributions with this parameter, (ii) $N_t$ is a function of $\theta_t$, and (iii) $N_0 = 1$, and then apply Ville's inequality. (In other words, we look for suitable \emph{e-processes}~\citep{ramdas2025hypothesis}). This is also the approach we take here. 

In our case, $\calP$ will be the set of distributions with average conditional mean $\mu_t$. We will, therefore, simply refer to nonnegative $\calP$-supermartingales as nonnegative supermartingales. Further, if we say that $(C_t)$ is a $(1-\alpha)$-CS for $\mu_t$ it should be assumed unless stated otherwise that it starts from time $t=1$.

We will occasionally use asymptotic notation in an almost sure sense. In particular, for two functions $f,g:\mathbb{N}_0 \to \Re$, we write $f = \Oas(g)$ (resp., $f = \oas(g)$) to mean that $f = O(g)$ (resp., $f = o(g)$) $P$-almost surely, where the measure $P$ will be clear from context. We also write $f\sim g$ to mean that $\lim_{t\to\infty} f(t)/g(t) =1$. If $f(t)$ and $g(t)$ are random variables, then the limit should be understood to hold almost surely. 
All logarithms refer to the natural logarithm unless otherwise specified.

\section{A new empirical Bernstein supermartingale}
\label{sec:new-nsm}

The empirical Bernstein bounds of \citet{howard2021time} and \citet{waudby2024estimating} are constructed by first considering a deterministic inequality first given by \citet{fan2015exponential}, commonly referred to as \emph{Fan's inequality}:    
\begin{equation}
\label{eq:fans-inequality}
    \textnormal{Fan's inequality:} \quad \text{For all }\lambda\in [0,1) \text{ and }\xi\geq -1, \; \exp\{\lambda\xi - \psi_E(\lambda)\xi^2\} \leq 1 + \lambda\xi, 
\end{equation}
where $\psi_E(\lambda) = -\log(1-\lambda) - \lambda$. It is tempting to apply Fan's inequality with $\xi = X_t-\E_{t-1}X_t$ and then take expectations, in which case one obtains that $\E_{t-1} \exp\{\lambda (X_t-\mu) - \psi_E(\lambda) (X_t-\mu)^2\} \leq 1$. While the process with increments $\exp\{ \lambda(X_t -\mu) - \psi_E(\lambda)(X_t - \mu)^2\}$ thus defines a nonnegative supermartingale, the resulting CS cannot be solved in closed form for $\mu$. 

To solve this problem, \citet{howard2021time} proved that one can replace $(X_t - \mu)^2$ with $(X_t - \Xhat_t)^2$ after taking expectations, where $\Xhat_t\in[0,1]$ is predictable (i.e., $\calF_{t-1}$-measurable). This gives rise to \emph{Howard's inequality}:
\begin{equation}
    \text{Howard's inequality:} \quad \E_{t-1}\exp\left\{\lambda (X_t -\mu) -\psi_E(\lambda)(X_t - \hatX_{t})^2\right\} \leq 1, 
\end{equation}
where again $\lambda\in[0,1)$. 
The empirical Bernstein bound \citet{howard2021time} and the closed-form bound of \citet{waudby2024estimating} follow from Howard's inequality after optimizing for $\lambda$. \citet{howard2021time} ``stich'' $\lambda$ over geometrically spaced epochs (see Section~\ref{sec:stitching} where we do something similar), while \citet{waudby2024estimating} choose a distinct $\lambda_t$ at every timestep $t$, which has come to be known as the ``method of predictable plug-ins'' and can be traced back to \citet{wald1947sequential}.  

%Both \citet{howard2021time} and \citet{waudby2024estimating} apply Fan's inequality with $\xi = Y_t - \delta_t$. Applying expectations and rearranging eventually give rise to the two results (which deviate in terms of how they choose $\lambda$). The width of their bounds thus rely on $\xi^2 = (X_t - \hatX_t)^2$, giving them their empirical flavor. 

Our approach is motivated by the following question: Can we reverse the roles of $\lambda$ and $\xi$ in Fan's inequality? That is, can we take $\lambda = X_t - \hatX_t$? The empirical term would thus show up in the argument of $\psi_E$ and we would be able to optimize over the parameter $\xi$. This is appealing, since the bound looks sub-Gaussian in terms of $\xi$. 

Of course, we cannot directly apply Fan's inequality with $\lambda = X_t - \hatX_t$, since $\lambda$ must be nonnegative. This motivates the following straightforward modification of Fan's inequality to handle negative values of $\lambda$. 

\begin{lemma}[Modified Fan's Inequality]
\label{lem:extended-Fan}
For all $\lambda\in (-1,1)$ and $\xi\in[-1,1]$,
\begin{equation}
\label{eq:extended-Fan}
  \exp\left\{\lambda\xi  - \psi_E(|\lambda|)\xi^2\right\}\leq 1 + \lambda\xi .  
\end{equation}
\end{lemma}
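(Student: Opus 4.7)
The plan is to reduce the modified inequality to Fan's original inequality \eqref{eq:fans-inequality} by a simple case split followed by a sign-flip substitution. The key symmetry to exploit is that both $\lambda\xi$ and $|\lambda|\xi^2$ are invariant under the simultaneous substitution $(\lambda,\xi)\mapsto(-\lambda,-\xi)$: the first is the product of two things whose signs both flip, while the second involves $|\lambda|$ and $\xi^2$, neither of which is affected.

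First I would handle the case $\lambda\in[0,1)$. Here $|\lambda|=\lambda$, and since $\xi\in[-1,1]\subseteq[-1,\infty)$, the hypotheses of Fan's inequality are satisfied, so \eqref{eq:extended-Fan} is literally \eqref{eq:fans-inequality}.

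Next, for the case $\lambda\in(-1,0)$, I would set $\lambda' = -\lambda \in (0,1)$ and $\xi' = -\xi \in [-1,1]$. Then $\lambda\xi = \lambda'\xi'$ and $\psi_E(|\lambda|)\xi^2 = \psi_E(\lambda')(\xi')^2$, so the left-hand side of \eqref{eq:extended-Fan} equals $\exp\{\lambda'\xi' - \psi_E(\lambda')(\xi')^2\}$ and the right-hand side equals $1+\lambda'\xi'$. Since $\lambda'\in[0,1)$ and $\xi'\geq -1$, Fan's inequality applied to $(\lambda',\xi')$ yields exactly the desired bound.

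There is no genuine obstacle here; the content of the lemma is essentially that $|\lambda|$ is the ``right'' quantity to appear in the $\psi_E$ term when the sign of $\lambda$ is allowed to flip, and this is forced by the two invariances noted above. The only reason we need $\xi\in[-1,1]$ (rather than merely $\xi\geq -1$ as in Fan) is that the sign-flip requires $-\xi\geq -1$ as well, i.e.\ $\xi\leq 1$.
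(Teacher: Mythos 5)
Your proof is correct and follows essentially the same route as the paper: a case split on the sign of $\lambda$, with the negative case reduced to Fan's inequality via the sign-flip substitution $(\lambda,\xi)\mapsto(-\lambda,-\xi)$. Nothing further is needed.
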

\begin{proof}
If $\lambda\geq 0$, this reduces to~\eqref{eq:fans-inequality}. If $\lambda<0$, set $\lambda^\circ = -\lambda \in (0,1)$ and $\xi^\circ = -\xi \geq -1$. Applying~\eqref{eq:fans-inequality} with $\xi^\circ$ and $\lambda^\circ$ gives $\exp\{ \lambda \xi  - \psi_E(|\lambda|) \xi^2\} = \exp\{ \lambda^\circ \xi^\circ - \psi(\lambda^\circ)\xi^2\} \leq 1 + \lambda^\circ\xi^\circ = 1+\lambda\xi$. 
\end{proof}

Just as Howard's inequality can be derived from Fan's inequality, we can derive a modified Howard's inequality from our modified Fan's inequality. 

\begin{lemma}[Modified Howard's inequality]
\label{lem:modified-howard}
Let $X$ be a random variable in $[0,1]$ with law $P$ and let $\Xhat\in[0,1)$ be nonrandom. Then, for any $\xi\in[-1,1]$, 
%Let $(X_t)$ be a stochastic process taking values in $[0,1]$ and let $(\Xhat_t)$
\begin{equation}
    \E_P \exp\left\{ \xi(X - \E_P[X]) - \xi^2\psi_E(|X-\hatX|)\right\} \leq 1. 
\end{equation}
\end{lemma}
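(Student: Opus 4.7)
The plan is to reduce the claim to the modified Fan's inequality (Lemma~\ref{lem:extended-Fan}) by splitting the exponent into a random piece that Fan's inequality handles directly and a deterministic piece that can be pulled out of the expectation. Writing
\[
\xi(X - \E_P[X]) \;=\; \xi(X - \hatX) \;+\; \xi(\hatX - \E_P[X]),
\]
the target exponential factors as
\[
\exp\{\xi(X - \hatX) - \xi^2 \psi_E(|X - \hatX|)\} \cdot \exp\{\xi(\hatX - \E_P[X])\}.
\]
Because $\hatX$ and $\E_P[X]$ are both nonrandom, the second factor is a deterministic constant that can be factored out of $\E_P[\cdot]$.

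Next, I would apply Lemma~\ref{lem:extended-Fan} with the substitution $\lambda \leftarrow X - \hatX$ (and $\xi$ as is). The hypotheses hold because $X \in [0,1]$ and $\hatX \in [0,1)$ force $X - \hatX \in (-1, 1]$, with the sole boundary case $\hatX=0$, $X=1$ handled by the convention $\psi_E(1) = +\infty$, under which the exponent is $-\infty$ and the bound is trivial. This gives the pointwise inequality
\[
\exp\{\xi(X - \hatX) - \xi^2 \psi_E(|X - \hatX|)\} \;\leq\; 1 + \xi(X - \hatX),
\]
and taking expectations under $P$ produces $1 + \xi(\E_P[X] - \hatX)$ on the right-hand side.

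Combining the two pieces and setting $\delta := \xi(\E_P[X] - \hatX)$ yields
\[
\E_P\exp\{\xi(X - \E_P[X]) - \xi^2 \psi_E(|X - \hatX|)\} \;\leq\; (1 + \delta)\, e^{-\delta},
\]
so the proof concludes with the elementary inequality $1 + \delta \leq e^\delta$. The argument is short and essentially algebraic; the only real subtlety I anticipate is the boundary case $|X - \hatX| = 1$, which requires nothing more than a sentence on how $\psi_E(1) = +\infty$ is to be interpreted. Everything else follows from Lemma~\ref{lem:extended-Fan} and the nonrandomness of $\hatX$.
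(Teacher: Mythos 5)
Your proposal is correct and follows essentially the same route as the paper's proof: apply Lemma~\ref{lem:extended-Fan} with $\lambda = X - \hatX$, take expectations, and absorb the deterministic shift $\xi(\hatX - \E_P[X])$ via $1+\delta \leq e^{\delta}$ (the paper writes this as $e^{\delta\xi}(1-\delta\xi)\leq 1$ with $\delta = \hatX - \E_P[X]$, which is the same computation). Your explicit treatment of the boundary case $X-\hatX = 1$ (when $\hatX = 0$) is a small point of extra care that the paper passes over, but it does not change the argument.
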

\begin{proof}
    Let $Y = X - \E[X]$ and $\delta = \hatX - \E[X]$   
    so that $Y - \delta = X - \hatX \in (-1,1)$. 
Applying~\eqref{eq:extended-Fan} with $\lambda = Y - \delta$, we obtain $\exp\{ (Y -\delta)\xi - \psi_E(|X - \hatX|)\xi^2\} \leq 1 + (Y- \delta)\xi$. 
Note that this is a legal choice for $\lambda$ since we've assumed that $\hatX$ is strictly less than 1.  
Taking expectations and noting both that $\delta$ is predictable and that $\E[Y] = 0$, we have 
\begin{align*}
    \E_P\exp\left\{ \xi Y - \psi_E(|X-\hatX|)\xi^2\right\} \leq e^{\delta \xi}(1 - \delta\xi)\leq 1, 
\end{align*}
as claimed. 
\end{proof}

Lemma~\ref{lem:modified-howard} quickly leads to a new empirical Bernstein-style supermartingale. In particular, if $(\Xhat_t)$ is a predictable sequence in $[0,1)$  and $(\xi_t)$ is a predictable sequence in $[-1,1]$, the process defined by 
\begin{equation}
 B_t(\xi_1^t)=\prod_{i\leq t}\exp\left\{ \xi_i (X_i - \E_{i-1}X_i) - \xi_i^2\psi_E(|X_i-\hatX_i|)\right\}.  
\end{equation}
is a nonnegative supermartingale. 
%In particular, if $(X_t)$ is drawn from some $P\in \calP_\mu$ then 
% \begin{equation}
% \label{eq:Bt-nsm}
% B_t(\mu,\xi_1^t) := \prod_{i\leq t}\exp\left\{ \xi_i (X_i - \mu) - \psi_E(|X_i-\hatX_i|)\xi_i^2\right\},
% \end{equation}
% is a nonnegative $P$-supermartingale. 
From here, one could follow the predictable plug-in strategy of \citet{waudby2024estimating}, choosing $\xi_t$ based on $X_1,\dots,X_{t-1}$. Such an approach requires that we take $\mu_t$ to be constant, however. Further, even under this assumption, we find that this strategy leads to worse results in our case. 
%We illustrate this approach in Appendix~\ref{sec:prpl} for those interested in the details. 
Instead, we will obtain our CS by choosing $\xi_t=\xi$, and integrating $B_t(\xi)$ over $\xi$.

\section{An empirical Bernstein CS via the method of mixtures} 
\label{sec:mom}

Following an approach which has come to be known as the ``method of mixtures''~\citep{darling1967confidence,howard2021time}, we form another nonnegative supermartingale from $(B_t(\xi))$ by mixing over the parameters $\xi$. 
Fix $\xi = \xi_t$ and rewrite $B_t(\xi)$ as  $B_t(\mu_t, \xi) = \exp\{ \xi(S_t - t\mu_t) - \xi^2 V_t\}$ 
where $S_t = \sum_{i\leq t} X_i$ and $V_t = \sum_{i\leq t} \psi_{E}(|X_i - \hatX_i|)$. For any data-free distribution over $[-1,1]$ with density $\rho$, the mixture  
\begin{equation}
	M_t(\mu_t) := \int_{-1}^1 B_t(\mu_t;\xi) \rho(\xi) \d\xi, 
\end{equation}
is again a nonnegative supermartingale by Fubini's theorem. We may thus form a $(1-\alpha)$-CS for $\mu_t$ by taking our confidence set at time $t$ to be $\{m\in[0,1]: M_t(m) <1/\alpha\}$, since $M_t(\mu_t) < 1/\alpha$ with probability $1-\alpha$ by Ville's inequality. 

A particularly tight CS comes from taking $\rho$ to be the density of a truncated Gaussian distribution over $[-1,1]$ with mean 0 and variance $\kappa^2$. To state the result, first define 
\begin{equation}
	I(y; v) := \int_{-1}^1 \exp\left\{ y\xi - v\xi^2\right\}\d\xi. 
\end{equation}
The function $I(y;v)$ is convex with a minimum at $y=0$. One should think of it as behaving roughly as $f(y) = y^2 + c_v$ for a constant $c_v>0$ depending on $v$ (and decreasing as $v$ grows). Writing the set $\{m: M_t(m) < 1/\alpha\}$ in terms of $I$ leads to the following CS.

\begin{proposition}
\label{prop:mixture-cs-tn}
    Let $(X_t)$ be a sequence of random variables in $[0,1]$ with average conditional mean $\mu_t$.  Let $(\hatX_t)$ be any predictable sequence in $[0,1)$. Fix $\kappa>0$ and let $Z = \Phi(1/\kappa) - \Phi(-1/\kappa)$. Set $S_t = \sum_{i\leq t}X_i$ and $U_t = (2\kappa^2)^{-1} + \sum_{i\leq t} \psi_E(|X_i - \hatX_i|)$.  Then 
    \begin{equation}
    \label{eq:cs-mixture}
	C_t^\mix := \bigg\{ m: I(S_t - tm; U_t) < \frac{\kappa Z \sqrt{2\pi}}{\alpha}\bigg\},
\end{equation}
constitutes a $(1-\alpha)$-CS for $\mu_t$. Further, if $\mu_t \equiv \mu$ is constant over time, the running intersection $(\cap_{j\leq t} C_j^\mix)_{t\geq 1}$ is also a $(1-\alpha)$-CS for $\mu$. 
\end{proposition}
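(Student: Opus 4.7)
\textbf{Proof plan for Proposition~\ref{prop:mixture-cs-tn}.} The plan is to show that $M_t(\mu_t)$ is a nonnegative supermartingale starting below $1$, compute the integral in closed form (up to the function $I$), and conclude via Ville's inequality.

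First, fix $\xi \in [-1,1]$ and consider $B_t(\mu_t,\xi) = \exp\{\xi(S_t - t\mu_t) - \xi^2 V_t\}$ with $V_t = \sum_{i\le t}\psi_E(|X_i-\hatX_i|)$. Since $t\mu_t = \sum_{i\le t} \E[X_i\mid\calF_{i-1}]$, we can rewrite $B_t(\mu_t,\xi) = \prod_{i\le t}\exp\{\xi(X_i - \E_{i-1}X_i) - \xi^2\psi_E(|X_i-\hatX_i|)\}$, which is a nonnegative supermartingale with $B_0 = 1$ by the modified Howard inequality (Lemma~\ref{lem:modified-howard}) applied conditionally on $\calF_{i-1}$. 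Then define $M_t(\mu_t) = \int_{-1}^1 B_t(\mu_t,\xi)\rho(\xi)\,\d\xi$ with $\rho$ the density of a truncated $N(0,\kappa^2)$ on $[-1,1]$. Tonelli (valid since the integrand is nonnegative) interchanges the conditional expectation and the $\d\xi$ integral, so $\E_{t-1}M_t(\mu_t) \le M_{t-1}(\mu_{t-1})$, and $M_0 = \int_{-1}^1 \rho = 1$.

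Next, I would compute $M_t(\mu_t)$ in closed form. Writing $\rho(\xi) = \frac{1}{\kappa Z\sqrt{2\pi}}\exp\{-\xi^2/(2\kappa^2)\}$ and combining the Gaussian exponent with $-\xi^2 V_t$ yields $-\xi^2 U_t$ with $U_t = V_t + (2\kappa^2)^{-1}$. Therefore
\begin{equation}
M_t(\mu_t) = \frac{1}{\kappa Z\sqrt{2\pi}} \int_{-1}^1 \exp\{\xi(S_t - t\mu_t) - \xi^2 U_t\}\,\d\xi = \frac{I(S_t - t\mu_t;\, U_t)}{\kappa Z\sqrt{2\pi}}.
\end{equation}
Applying Ville's inequality to the nonnegative supermartingale $M_t(\mu_t)$ gives $P(\exists t\ge 1:\, M_t(\mu_t) \ge 1/\alpha) \le \alpha$, which is equivalent to $P(\forall t\ge 1: I(S_t - t\mu_t;\, U_t) < \kappa Z\sqrt{2\pi}/\alpha) \ge 1-\alpha$. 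This is precisely the event $\{\forall t:\, \mu_t \in C_t^\mix\}$, establishing the CS property.

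Finally, for the running intersection claim when $\mu_t\equiv \mu$: on the $(1-\alpha)$-probability event just established, $\mu \in C_j^\mix$ for every $j$, so $\mu \in \bigcap_{j\le t} C_j^\mix$ for every $t$; hence the running intersection is a $(1-\alpha)$-CS as well. The only substantive step is the interchange leading to the supermartingale property and the integral identity defining $I$; both are routine, so I do not anticipate any real obstacle beyond bookkeeping of the normalizing constant $\kappa Z\sqrt{2\pi}$ and verifying $M_0 = 1$.
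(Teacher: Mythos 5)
Your proposal is correct and follows essentially the same route as the paper: mix the supermartingale $B_t(\mu_t,\xi)$ over a truncated Gaussian, absorb the Gaussian exponent into $U_t$ to identify $M_t(\mu_t) = I(S_t - t\mu_t; U_t)/(\kappa Z\sqrt{2\pi})$, and apply Ville's inequality, with the running intersection following from constancy of $\mu$. Your handling of the time-varying mean (noting $\E_{t-1}M_t(\mu_t)\le M_{t-1}(\mu_{t-1})$) is a nice explicit touch that the paper leaves implicit, but it is the same argument.
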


The proof is in Appendix~\ref{proof:mixture-cs-tn}. As in $C_t^\prpl$, we recommend taking $\Xhat_t = t^{-1}(1/2 + \sum_{j<t}X_j)= \muhat_{t-1}$ for all $t\geq 1$.  Notice that since $\hatX_1 = 1/2$, $\hatX_t <1$ for all $t$, ensuring that the requirements of Proposition~\ref{prop:mixture-cs-tn} are met. Further, $\hatX_t$ depends only on $X_1,\dots,X_{t-1}$, and is thus predictable. However, it's worth emphasizing that $\Xhat_t$ might in principle be the result of any algorithm using $X_1,\dots,X_{t-1}$ to predict $X_t$.

\begin{remark}
\label{rem:alternative-dists}
    One can of course consider distributions other than a truncated Gaussian. For instance, if we take $\rho$ to be uniform on $[-1,1]$, then a simple modification of the proof of Proposition~\ref{prop:mixture-cs-tn} shows that 
    \begin{equation}
        C_t^{\unif}:= \bigg\{m: I(S_t - tm; V_t) < \frac{2}{\alpha}\bigg\},
    \end{equation}
    is a $(1-\alpha)$-CS for $\mu$. This CS has the benefit of not needing to specify an additional parameter $\kappa$, unlike $(C_t^\mix)$. However, we find that in practice $(C_t^\mix)$ reliably outperforms $(C_t^\unif)$ (see the experiments in Appendix~\ref{app:experiments}), so we work with the former throughout this section. That said, we emphasize that these two CSs differ only in terms of lower-order constants, so the discussion on asymptotics in Section~\ref{sec:asymp-width} holds for both. Additionally, one can show that $\kappa Z = \kappa(\Phi(1/\kappa) - \Phi(1/-\kappa))\to \sqrt{2/\pi}$ as $\kappa\to\infty$, to $C_t^\mix$ becomes $C_t^\unif$ in the limit. 
\end{remark}

To better understand how $C_t^\mix$ behaves, note that by a change of variables we can write 
\begin{equation}
\label{eq:It-explicit}
	I(y;v) = \frac{\exp\{y^2/(4v)\}}{2\sqrt{v/\pi}}\left(\erf\bigg(\sqrt{v} - \frac{y}{2\sqrt{v}}\bigg) + \erf\bigg(\sqrt{v} + \frac{y}{2\sqrt{v}}\bigg)\right), 
\end{equation}
where
\begin{equation}
    \erf(z) = \frac{2}{\sqrt{\pi}}\int_0^z \exp(-u^2) \d u. 
\end{equation}
Since $I(y;v)$ is minimized at $y=0$, $m^* = S_t/t = \Xbar_t\in C_t^\mix$ as long as $I(0;U_t) < \kappa Z\sqrt{2\pi}/\alpha$. And indeed, as a sanity check we can verify that this final inequality holds for all $t$, implying that $C_t^\mix$ is never empty. To see this, note that $I(y;U_t)$ is decreasing in $U_t$ by~\eqref{eq:It-explicit}. Hence, $I(0;U_t) \leq I(0,1/(2\kappa^2)) = \kappa \sqrt{2\pi}(\erf(\sqrt{1/(2\kappa^2)}) = \kappa Z \sqrt{2\pi} < \kappa Z \sqrt{2\pi}/\alpha$ for $\alpha\in(0,1)$. Here we've used the identity $\Phi(x) = \frac{1}{2} (1 + \erf(\sqrt{x}/2)$ to conclude that $\erf(1/\sqrt{2\kappa^2}) = \Phi(1/\kappa) - \Phi(-1/\kappa)$.

The representation in~\eqref{eq:It-explicit} further implies that $I(y;v)$ is symmetric about $y=0$ and increasing in $|y|$. It follows that $C_t^\mix = (m^* - \widehat{m}, m^* + \widehat{m}) = (\Xbar_t - \widehat{m}, \Xbar_t + \widehat{m})$, 
for some $\widehat{m}>0$. In fact, since $I(y;v)$ is surjective and increasing on $[I(0;v),\infty)$, $\widehat{m}$ solves the equation $I(Z_t - t\widehat{m}; U_t) = \kappa Z \sqrt{2\pi}/\alpha$, and can thus be found by root finding or binary search. Thus, while the interval $C_t^\mix$ cannot be written down explicitly, it may be easily approximated. 

Still, in this work we want to derive closed-form CSs, so let us turn to that task next.

\subsection{Closed-form approximation}
\label{sec:closed-form}

\begin{figure}[t]
    \centering
    \begin{subfigure}{0.32\textwidth}
    \includegraphics[height=3.5cm]{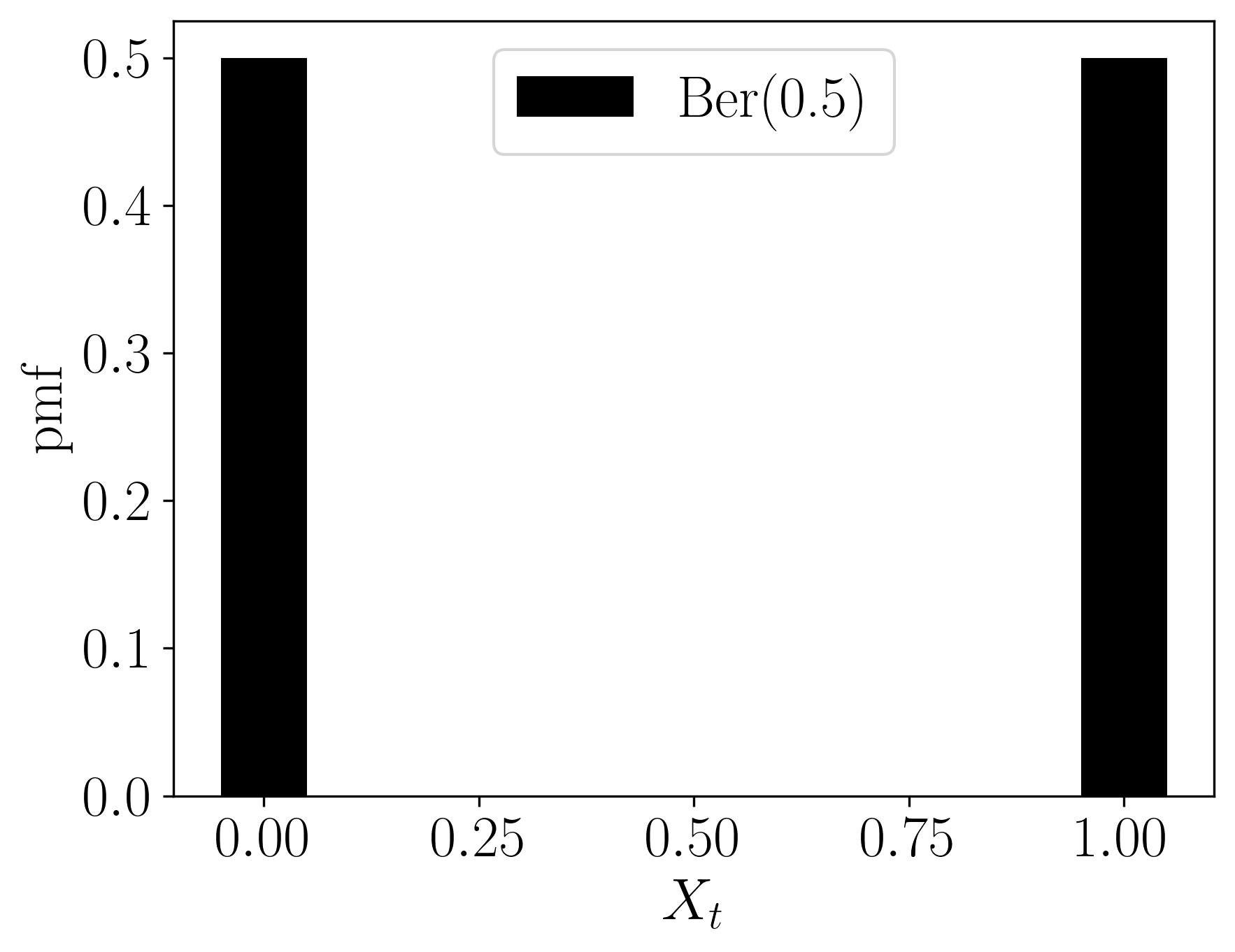}
    \end{subfigure}
    \begin{subfigure}{0.33\textwidth}
    \includegraphics[height=3.5cm]{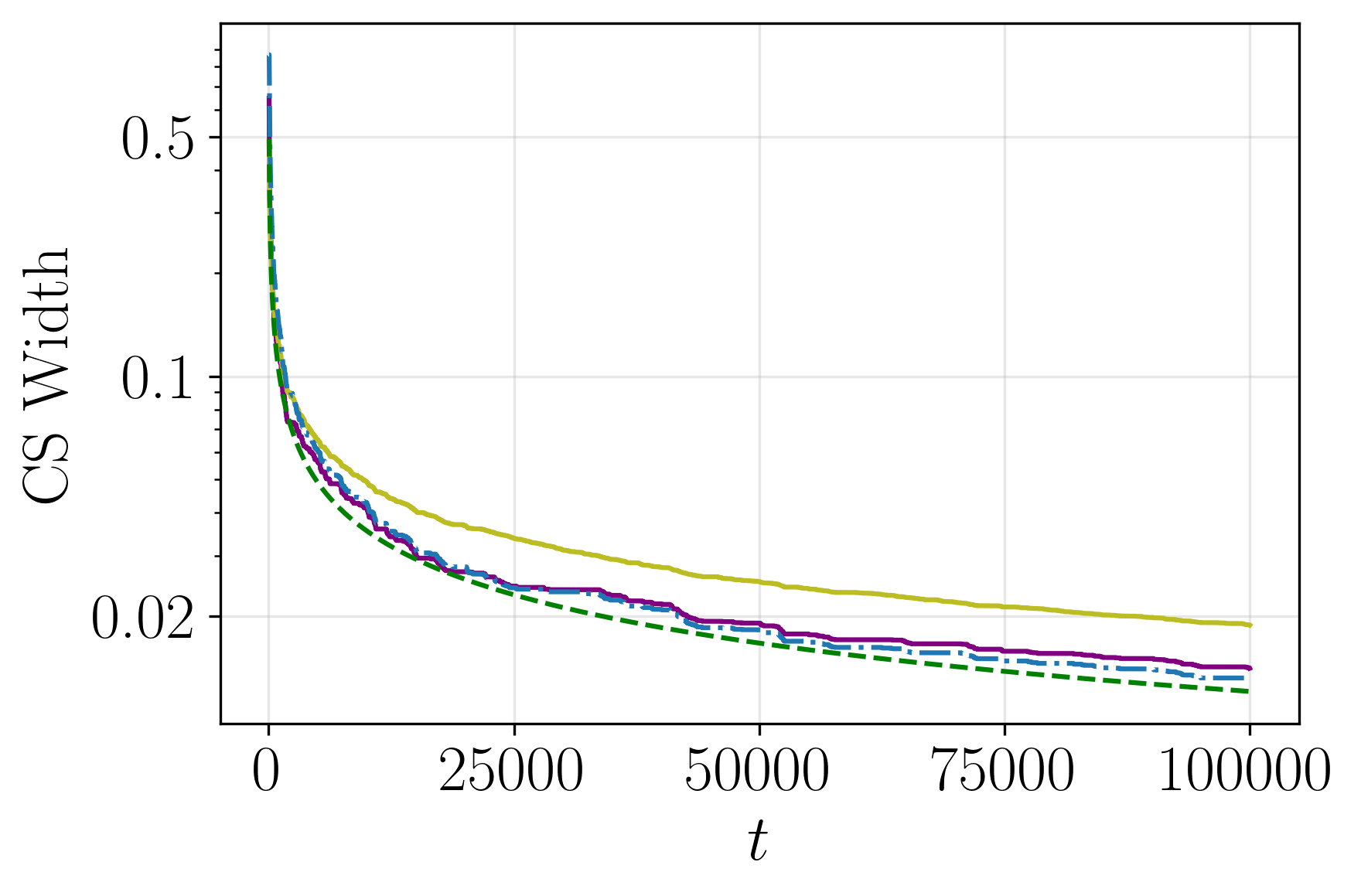}
    \end{subfigure}
    \begin{subfigure}{0.32\textwidth}
    \includegraphics[height=3.5cm]{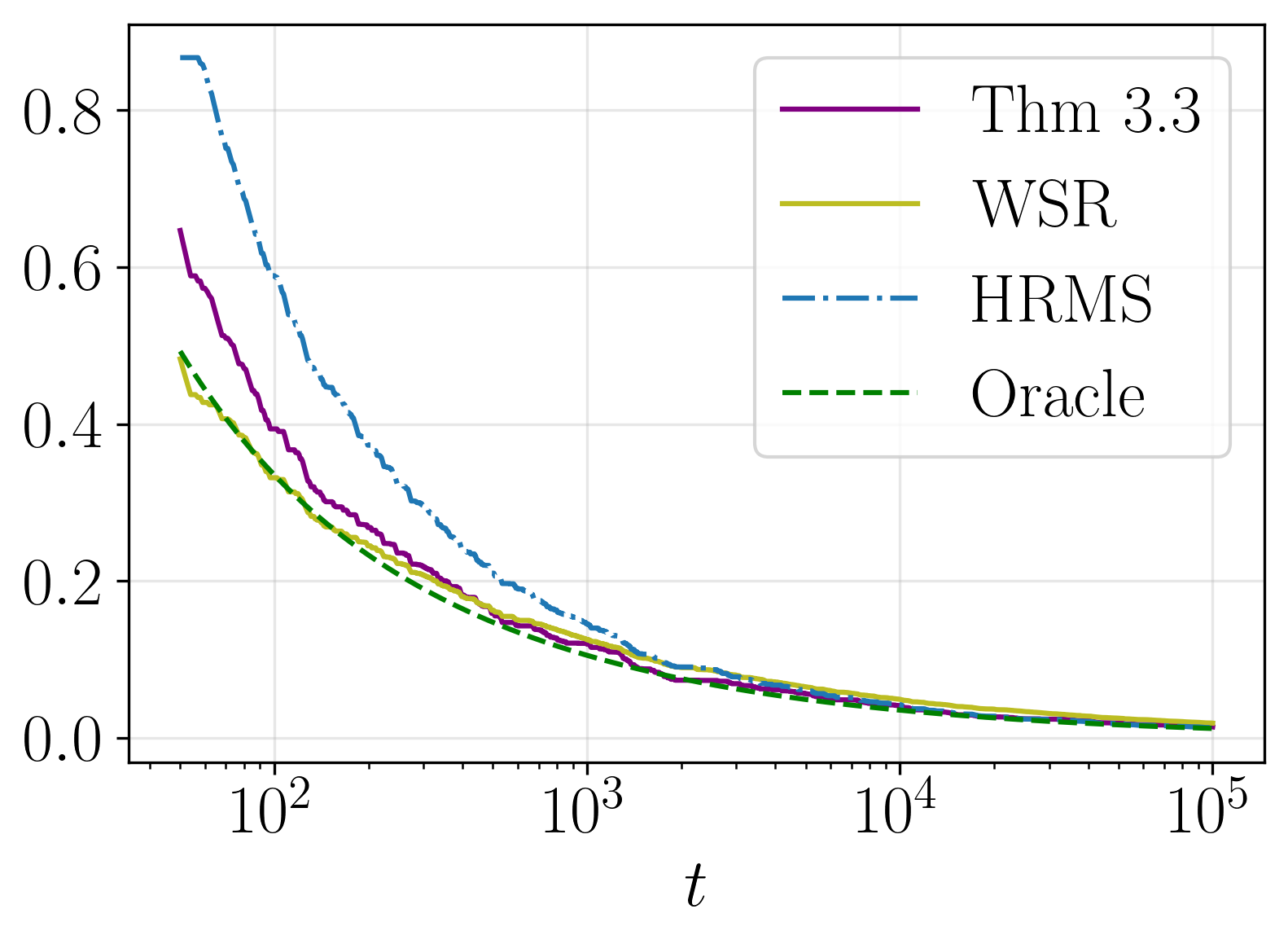}
    \end{subfigure}
    \begin{subfigure}{0.32\textwidth}
    \includegraphics[height=3.5cm]{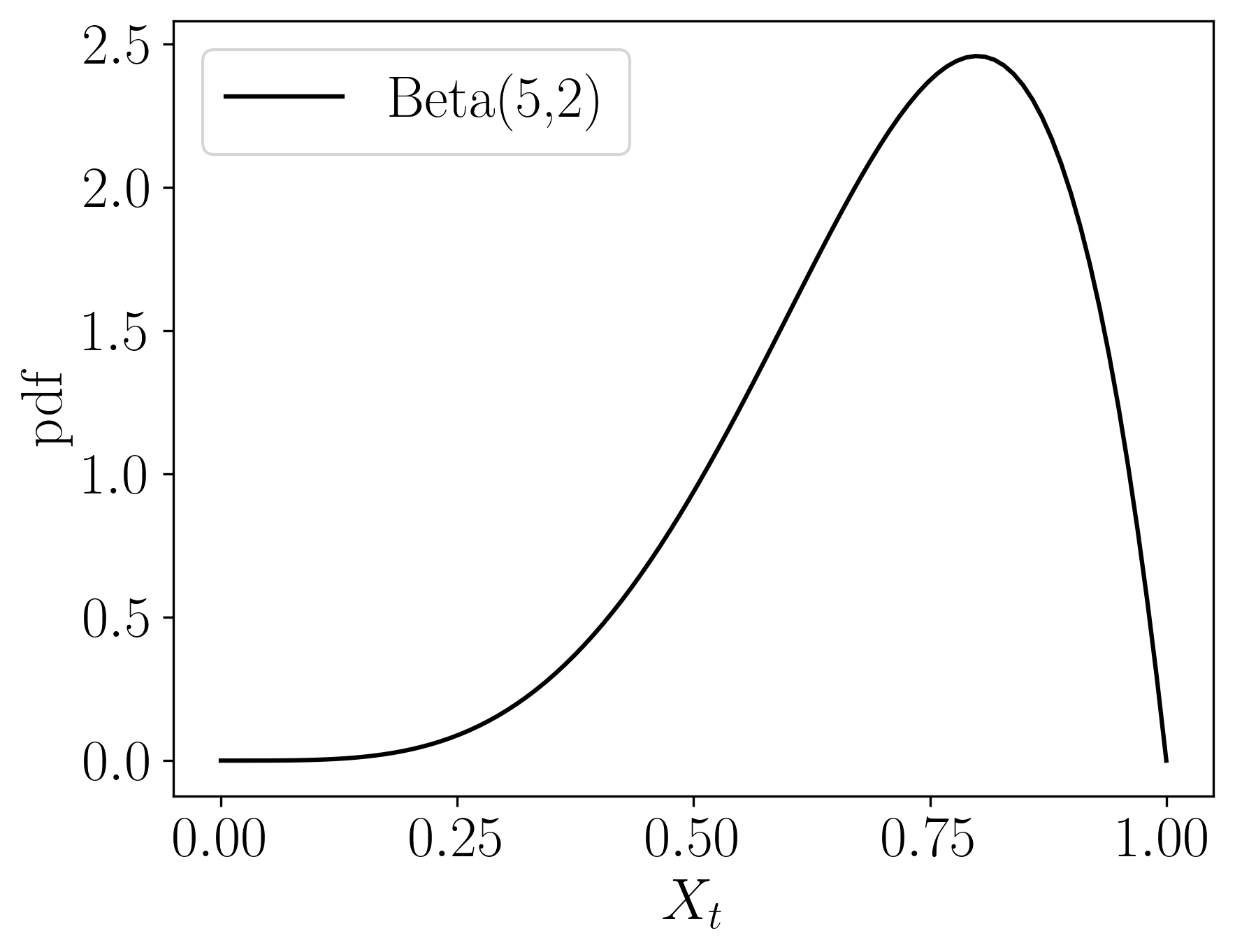}
    \end{subfigure}
    \begin{subfigure}{0.33\textwidth}
    \includegraphics[height=3.5cm]{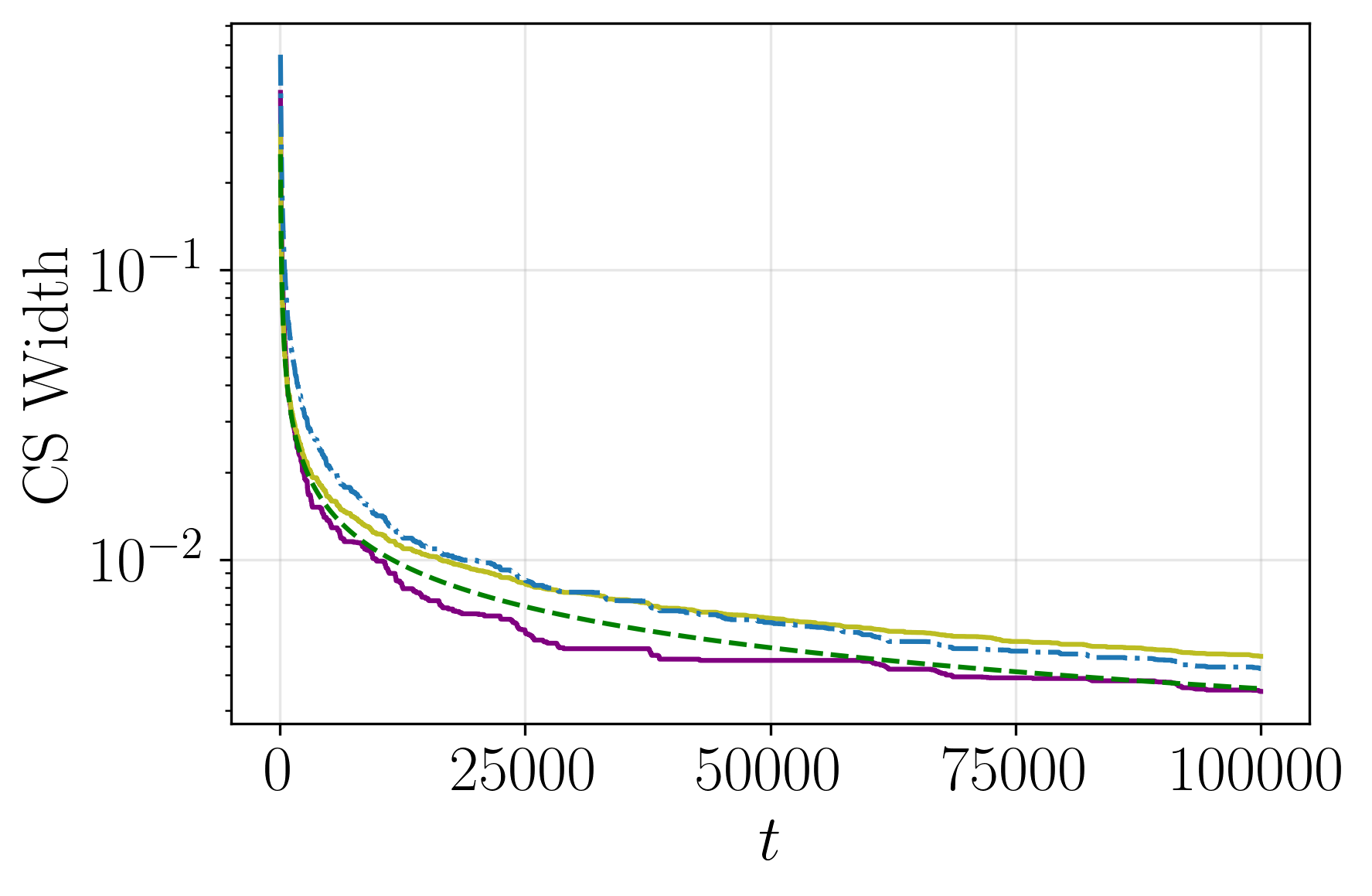}
    \end{subfigure}
    \begin{subfigure}{0.32\textwidth}
    \includegraphics[height=3.5cm]{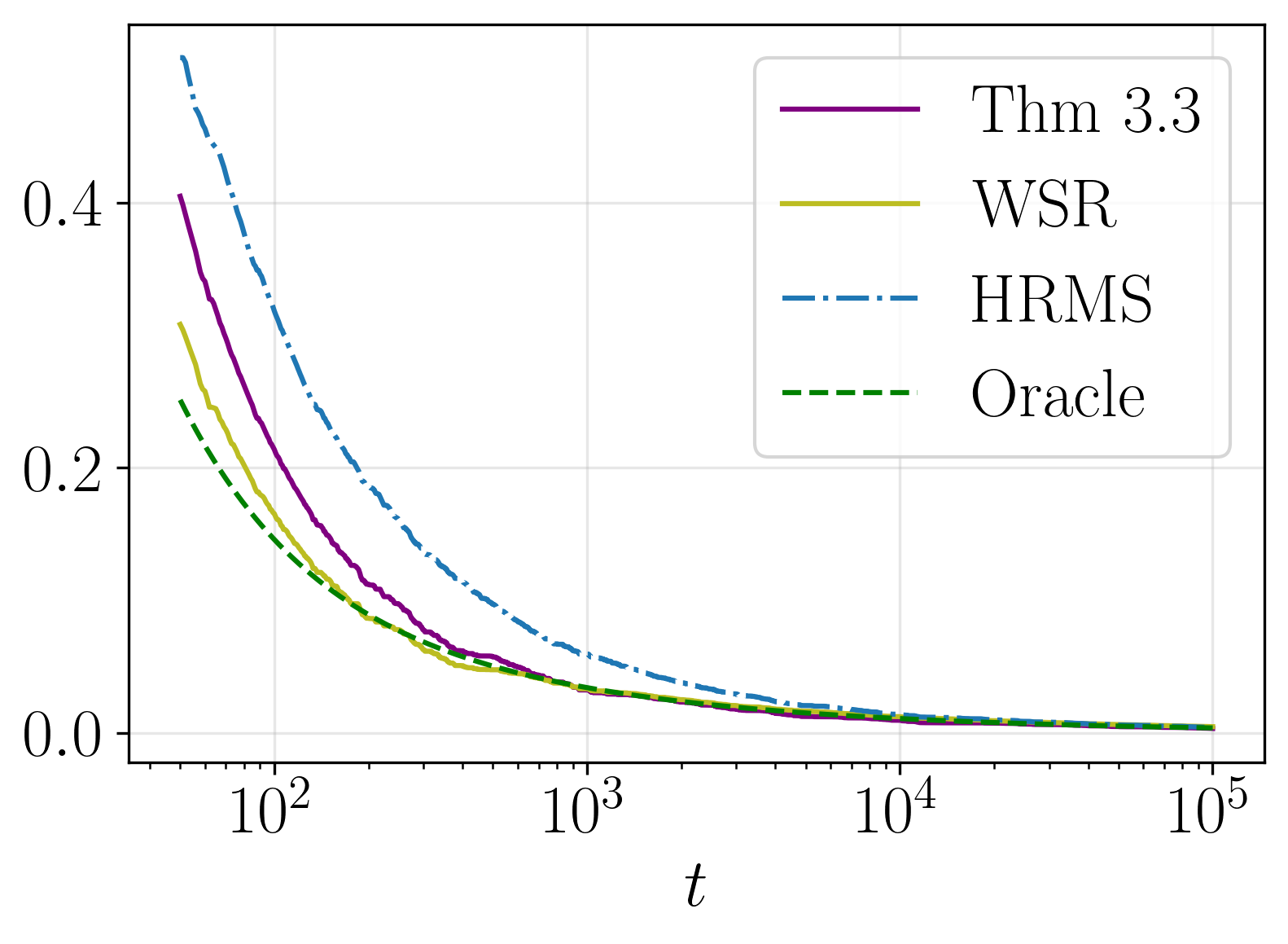}
    \end{subfigure}
    \begin{subfigure}{0.32\textwidth}
    \includegraphics[height=3.5cm]{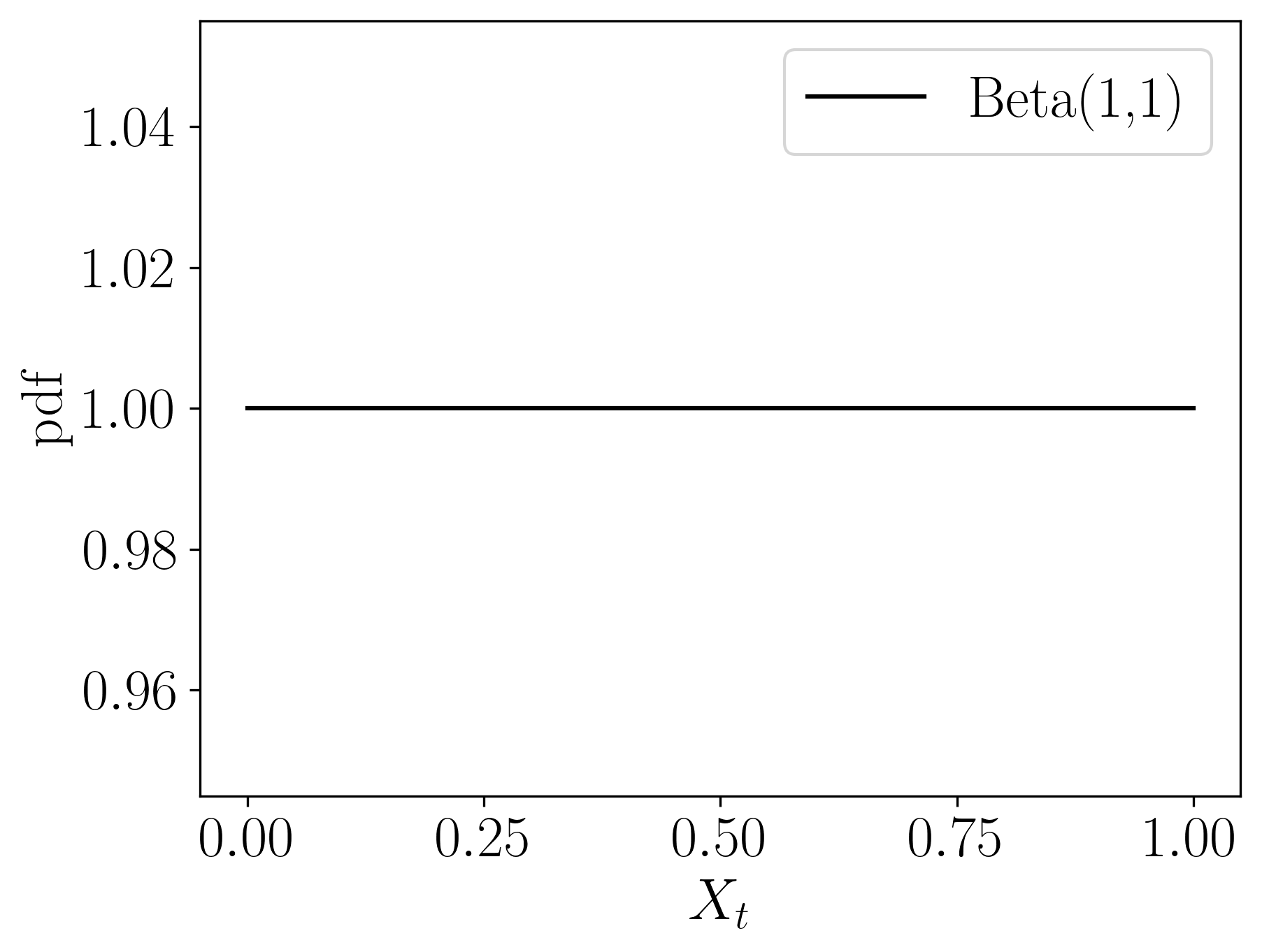}
    \end{subfigure}
    \begin{subfigure}{0.33\textwidth}
    \includegraphics[height=3.5cm]{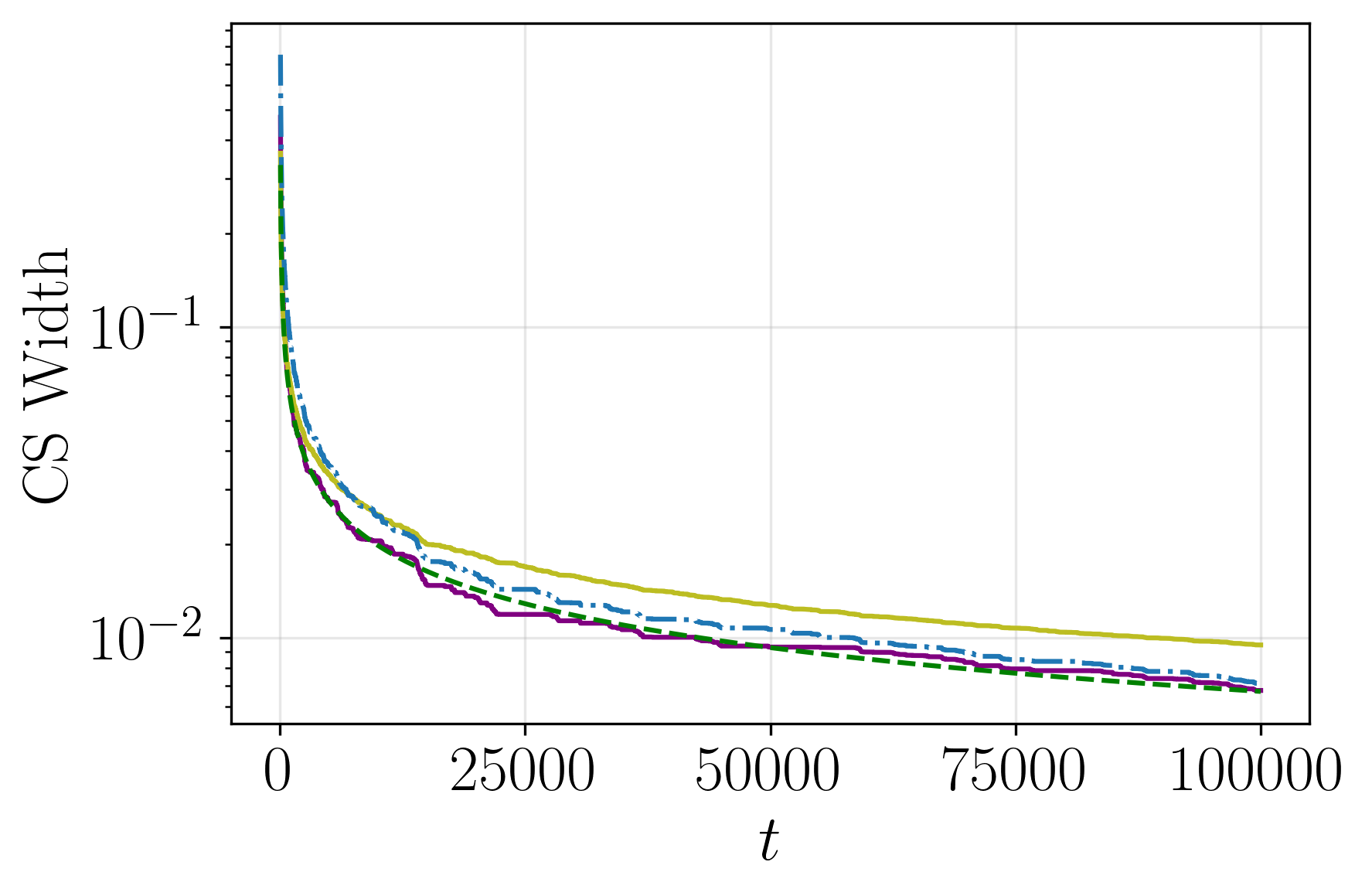}
    \end{subfigure}
    \begin{subfigure}{0.32\textwidth}
    \includegraphics[height=3.5cm]{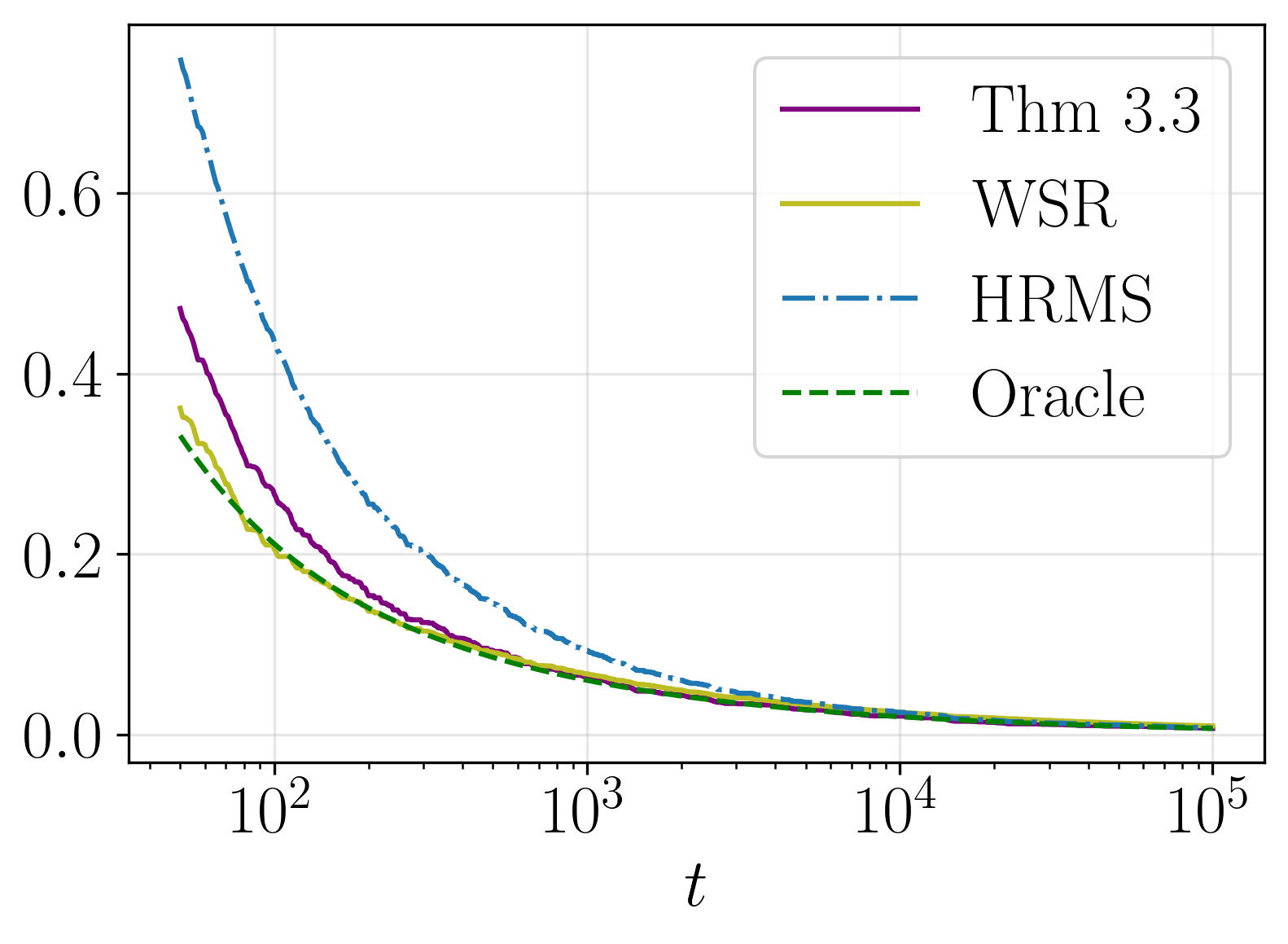}
    \end{subfigure}
    \caption{Comparison of the width of our closed-form  CS $C_t^\apx$ (Theorem~\ref{thm:closed-form}) with that of 
    the closed-form CS of \citet{waudby2024estimating} (WSR) and that of \citet{howard2021time} (HRMS) under the assumption of a constant conditional mean.  
    We fix $\kappa = 0.25$ and $\alpha=0.05$ for all experiments and plot the oracle Bernstein CS of \citet{howard2021time} for comparison. 
    %curve $\sqrt{\log(t)/t}$ for comparison. 
    The middle column plots the width of the CSs with the y-axis on a log-scale. The final column plots the x-axis on log-scale. 
    }
    \label{fig:comparison}
\end{figure}

\begin{figure}
    \centering
    \begin{subfigure}[t]{0.45\linewidth}
        \includegraphics[height=5cm]{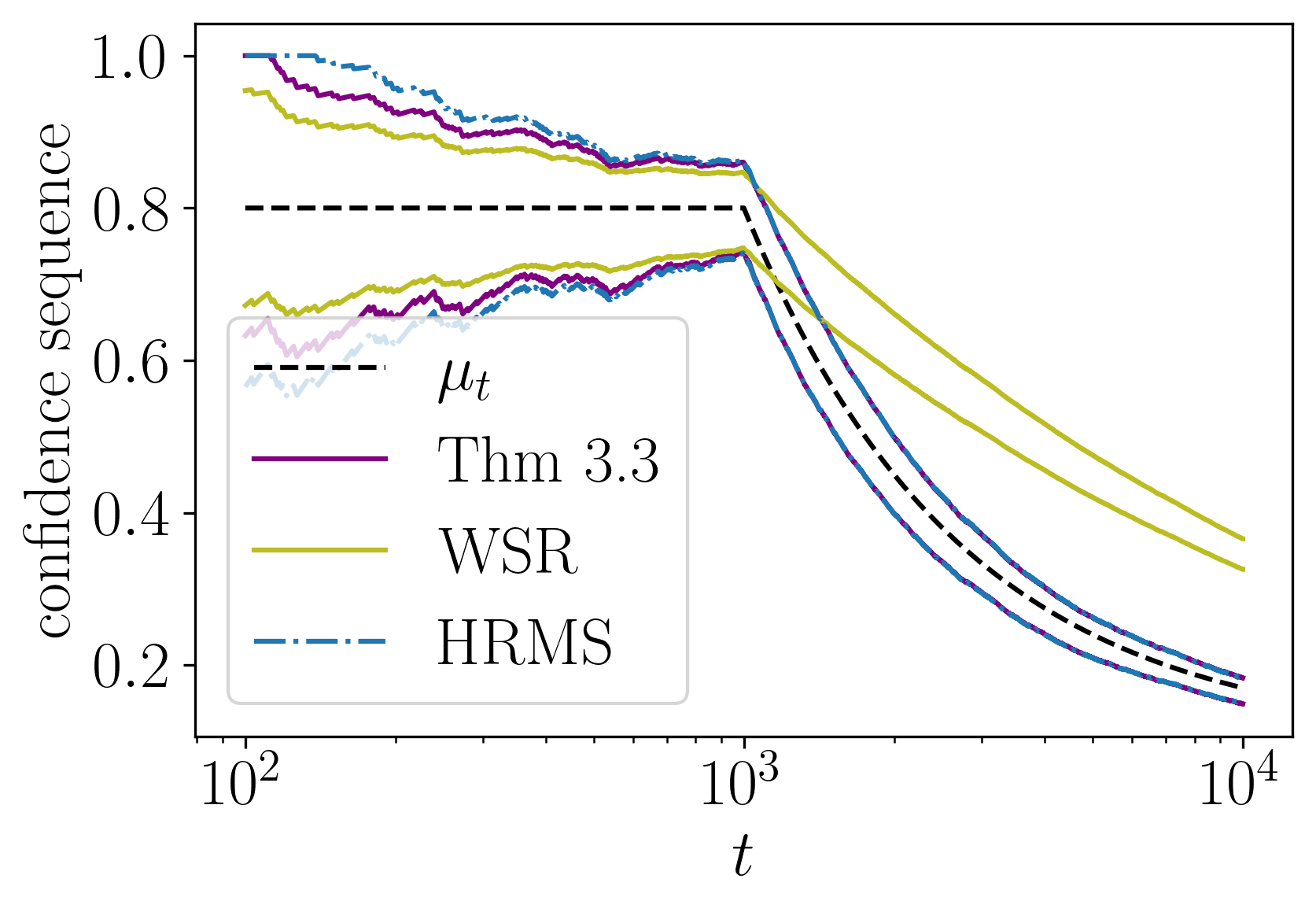} 
    \end{subfigure}
    \begin{subfigure}[t]{0.45\linewidth}
        \includegraphics[height=5cm]{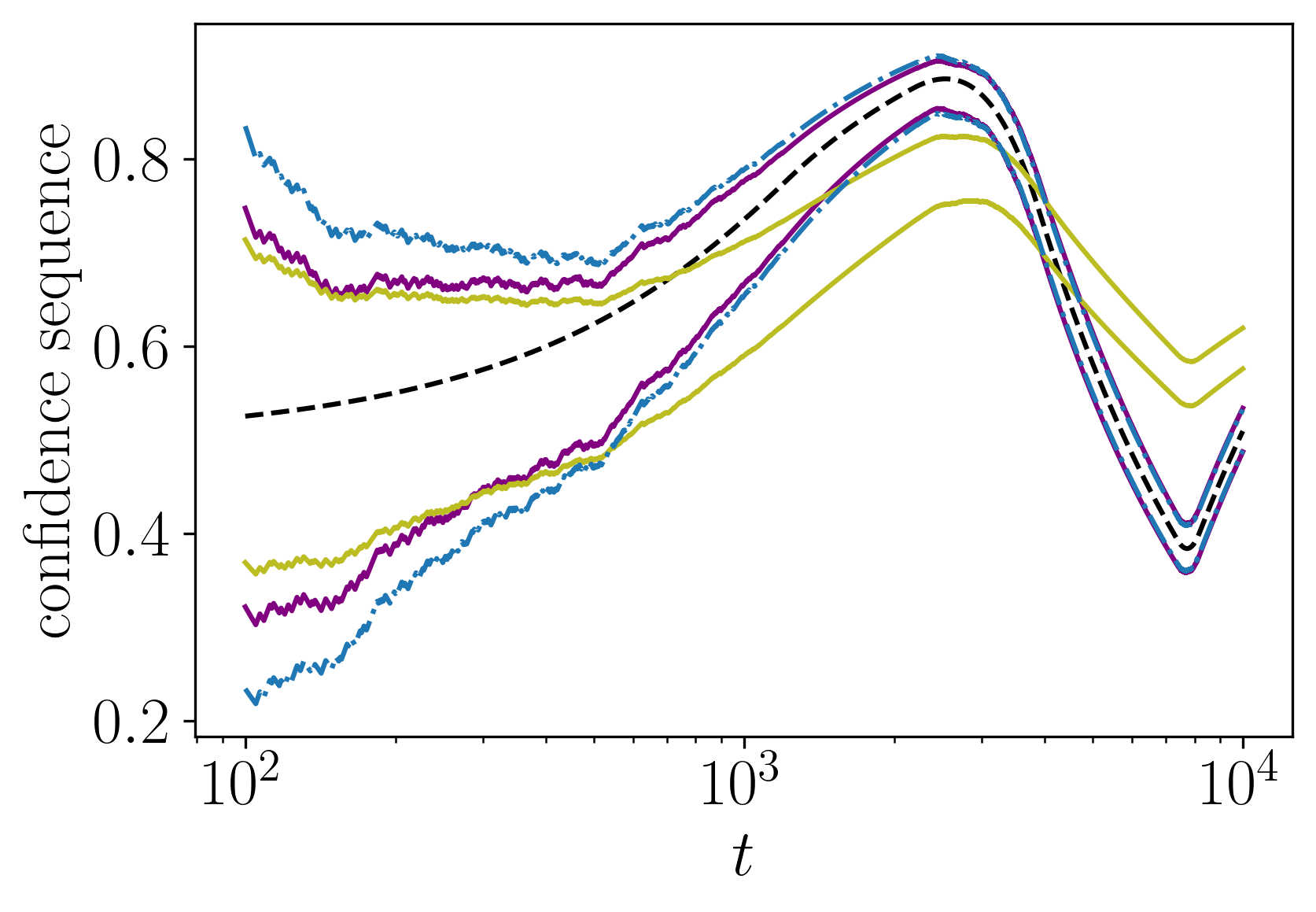}
    \end{subfigure}    
    \caption{The performance $C_t^\apx$ (Theorem~\ref{thm:closed-form}), $C_t^\hrms$ and $C_t^\prpl$ under a time-varying mean (dotted black line). For the left plot we draw the observations iid from $\ber(0.8)$ for $t\leq N/10$, and then for $\ber(0.2)$ thereafter. The second plot generates $p_t$ to target a sinusoidal-like conditional mean, and then draws $X_t \sim \ber(p_t)$. Here $\alpha=0.05$ and we use Theorem~\ref{thm:closed-form} with $\kappa=0.25$.   }
    \label{fig:time-varying}
\end{figure}

To move from Proposition~\ref{prop:mixture-cs-tn} to a closed-form bound, we can write down an explicit CS $(C_t^\apx)$ such that $C_t^\apx\supset C_t^\mix$. This superset only becomes viable after a data-dependent number of samples $t_0$ (i.e., a hitting-time), but $t_0$ is typically quite small in our experience (often less than 100; see Figure~\ref{fig:exact-vs-approx}). Constructing $C_t^\apx$ relies on noticing that the solution $y_t = t(X_t - m)$ to $I(y_t;U_t) = G$ will eventually be less than $U_t$ (this is what defines $t_0$; see Lemma~\ref{lem:yt<Ut} in the appendix). Thus, for $t$ large enough,  the arguments of both erf terms in~\eqref{eq:It-explicit} are nonnegative, allowing us deploy the  lower bound $\erf(x) \geq 1 - \exp(-x^2)$ for $x\geq 0$ and solve for $y_t$ explicitly. The result is Theorem~\ref{thm:closed-form}.  Its proof can be found in Appendix~\ref{proof:mom-closed-form}.

\begin{figure}[t]
	\centering
    \begin{subfigure}[t]{0.45\linewidth}
        \includegraphics[height=5cm]{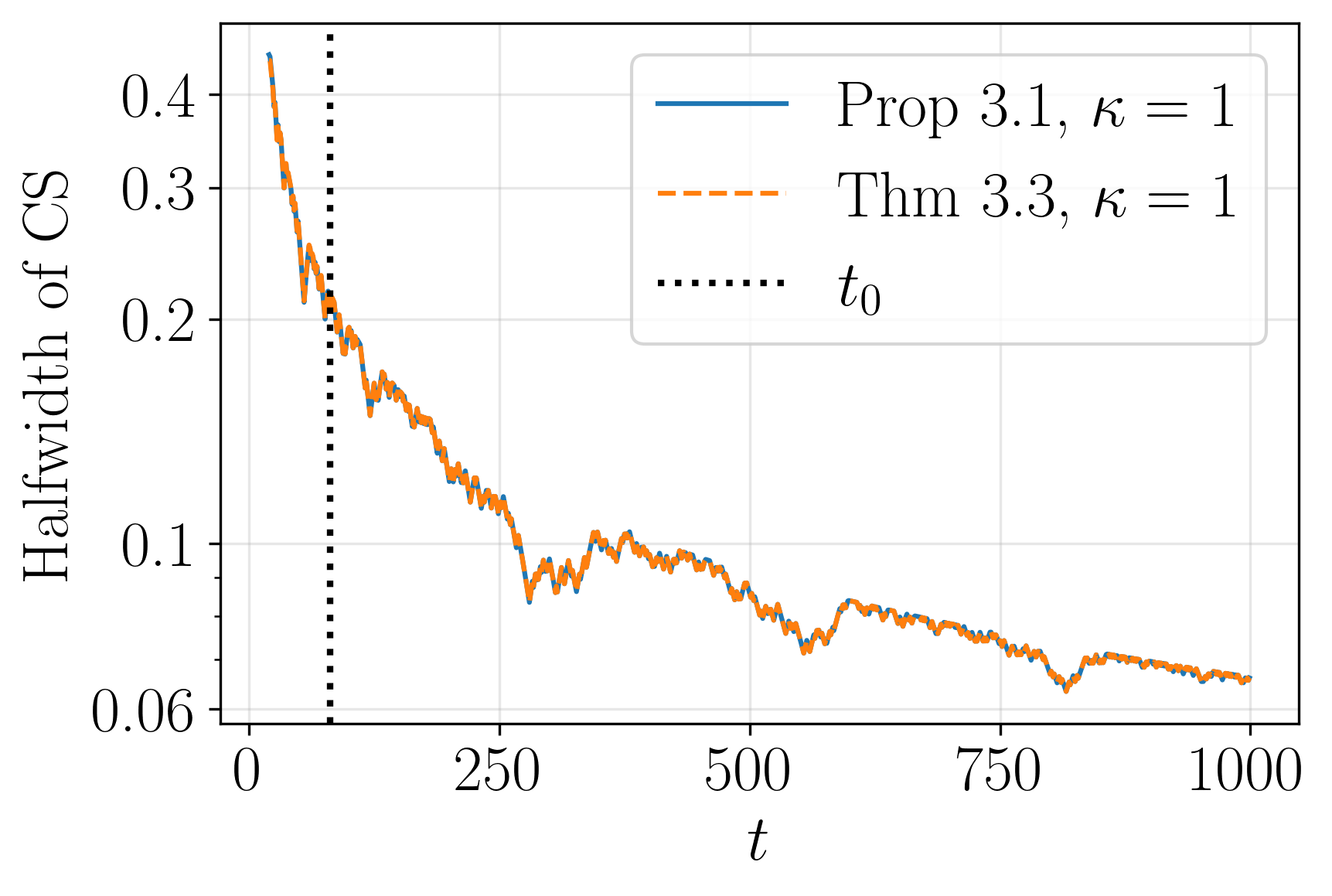}  
        \caption{}
        \label{fig:exact-vs-approx}
    \end{subfigure}
    \begin{subfigure}[t]{0.45\linewidth}
        \includegraphics[height=5cm]{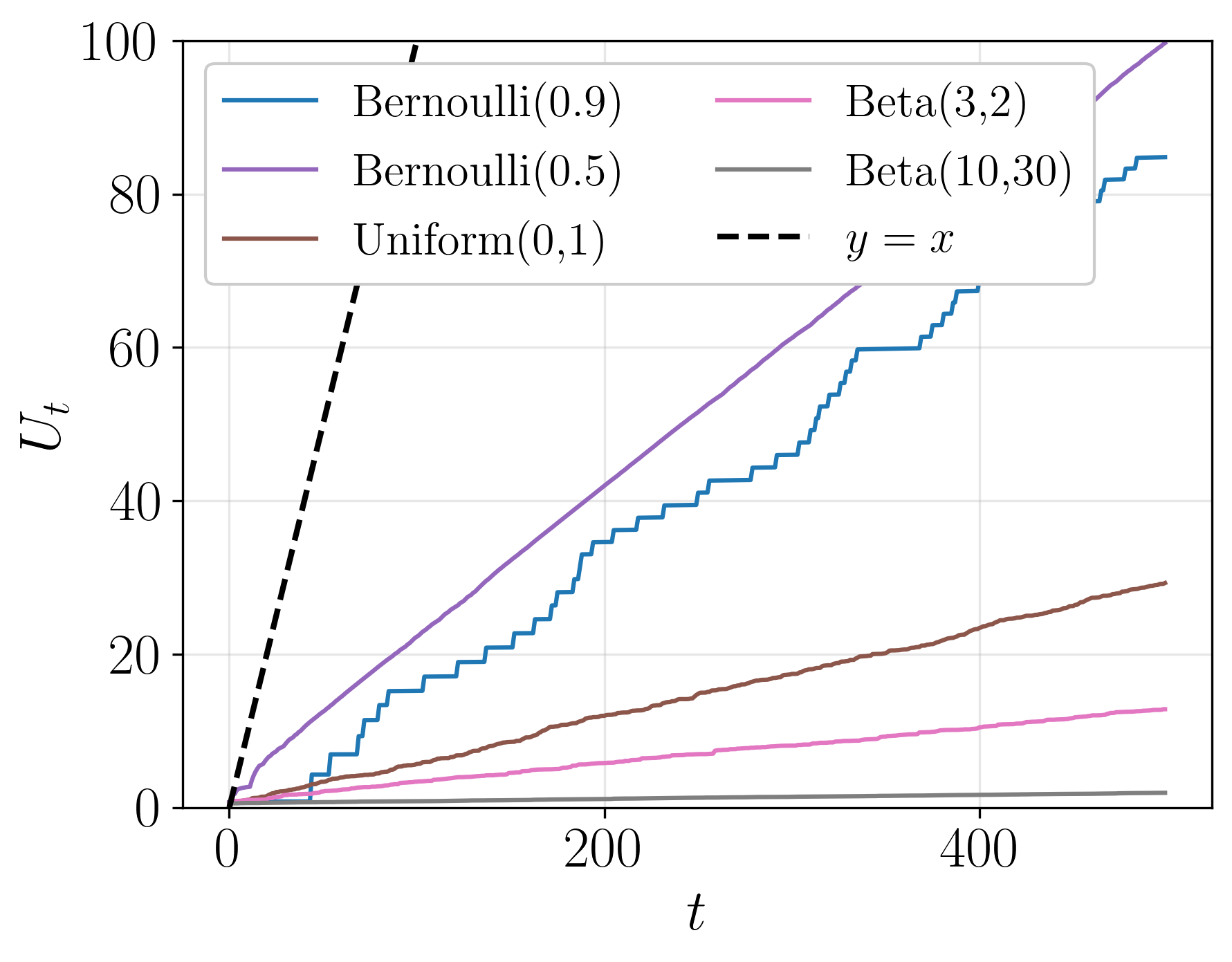}
        \caption{}
        \label{fig:Ut-growth}
    \end{subfigure}    
	\caption{(a) Comparison of the exact CS $C_t^\mix$ defined in Proposition~\ref{prop:mixture-cs-tn} and the relaxation $C_t^\apx$ defined in Theorem~\ref{thm:closed-form}. The differences between the two bounds are negligible. We also plot $t_0$, the time at which $C_t^\apx$ becomes valid (vertical dotted line). Here the observations were drawn independently as $X_t \sim \text{Ber(0.5)}$. (b) The growth of $U_t$ under various distributions. We plot the line $y=x$ for reference, demonstrating that $U_t$ scales as $ct$ for some $c<1$ in each case. 
    } 
\end{figure}

\begin{theorem}
\label{thm:closed-form}
Let $(X_t)_{t\geq 1}$ be a sequence of random variables in $[0,1]$ with average conditional means $(\mu_t)_{t\geq 1}$. Let $(\hatX_t)_{t\geq 1}$ be any predictable sequence in $[0,1)$. Fix $\kappa>0$ and take $Z = \Phi(1/\kappa) - \Phi(-1/\kappa)$. Set $U_t = (2\kappa^2)^{-1} + \sum_{i\leq t} \psi_E(|X_i - \hatX_i|)$ 
and $G_\alpha = \kappa Z \sqrt{2\pi}/\alpha$. 
Let $t_0$ be the minimum time $t$ at which 
    \begin{equation}
    \label{eq:t-condition}
        \sqrt{\frac{\pi}{U_{t}}}\left( \exp\bigg(\frac{U_{t}}{4}\bigg)  -  \frac{1}{2}\right) \geq G_\alpha. 
    \end{equation}
    Then the sequence $(C_t^\apx)_{t=t_0}^\infty$ is a $(1-\alpha)$-CS for $\mu_t$, where $C_t^\apx = (\Xbar_t \pm W_t^\apx)$, 
    \begin{equation}
    \label{eq:cs-mom-closed-form}
    W_t^\apx := \frac{2}{t}\sqrt{U_t\left( \ell_\alpha  + \frac{1}{2}\log(2U_t)\right)} \text{~ and ~} \ell_\alpha := \log\left(\frac{\kappa Z / \alpha}{1 - \exp(-U_t/4)}\right).
    \end{equation}
    Moreover, if $\mu_t\equiv\mu$ is constant over time, then the running intersection $(\cap_{j=t_0}^t C_j^\apx)_{t\geq t_0}$ is also a $(1-\alpha)$-CS for $\mu$. 
\end{theorem}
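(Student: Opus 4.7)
The plan is to show that $C_t^\mix \subseteq C_t^\apx$ for every $t \geq t_0$, which transfers the coverage guarantee of Proposition~\ref{prop:mixture-cs-tn} onto $C_t^\apx$. From~\eqref{eq:It-explicit} the map $y \mapsto I(y; U_t)$ is even, strictly increasing on $[0, \infty)$, and unbounded, so the set $C_t^\mix = \{m : I(S_t - tm;\, U_t) < G_\alpha\}$ (with $G_\alpha := \kappa Z\sqrt{2\pi}/\alpha$) is an open symmetric interval $(\Xbar_t - y_t^*/t,\, \Xbar_t + y_t^*/t)$, where $y_t^*$ is the unique positive root of $I(y; U_t) = G_\alpha$. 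It therefore suffices to exhibit a closed-form quantity $\overline{y}_t$ with $y_t^* \leq \overline{y}_t$ and verify that $\overline{y}_t = tW_t^\apx$.

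The core relaxation is the elementary bound $\erf(x) \geq 1 - \exp(-x^2)$ for $x \geq 0$. Whenever $|y| \leq U_t$, both erf arguments $\sqrt{U_t} \pm y/(2\sqrt{U_t})$ lie in $[\sqrt{U_t}/2,\, 3\sqrt{U_t}/2]$, so each is at least $1 - \exp(-U_t/4)$. Substituting into~\eqref{eq:It-explicit} yields
\[
I(y; U_t) \;\geq\; \sqrt{\pi/U_t}\,\exp\!\left(\frac{y^2}{4U_t}\right)(1 - \exp(-U_t/4)) \;=:\; L(y).
\]
Inverting $L(y) = G_\alpha$ is a purely algebraic step that, using the definitions of $\ell_\alpha$ and $G_\alpha$, returns exactly $y = 2\sqrt{U_t(\ell_\alpha + \tfrac{1}{2}\log(2U_t))} = tW_t^\apx$. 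Because $L$ is strictly increasing in $|y|$ and $I(\cdot; U_t) \geq L$ on $[-U_t, U_t]$, any root $y_t^* \in [0, U_t]$ of $I(y; U_t) = G_\alpha$ must satisfy $y_t^* \leq tW_t^\apx$, which is the desired inclusion.

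The chief obstacle is the self-referential nature of this argument: the relaxation $I \geq L$ is valid only for $|y| \leq U_t$, yet we are trying to use it to control $y_t^*$. The hitting time $t_0$ is designed precisely to break this loop. Monotonicity of $I(\cdot; U_t)$ shows $y_t^* \leq U_t$ iff $I(U_t; U_t) \geq G_\alpha$. Expanding $I(U_t; U_t)$ via~\eqref{eq:It-explicit} (its erf arguments specialize to $\sqrt{U_t}/2$ and $3\sqrt{U_t}/2$) and again applying $\erf(x) \geq 1 - \exp(-x^2)$ produces, after absorbing the tiny $\tfrac{1}{2}\exp(-2U_t)$ remainder, the stated condition~\eqref{eq:t-condition}. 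This is the content of Lemma~\ref{lem:yt<Ut}. For every $t \geq t_0$ the relaxation is therefore legitimate, $y_t^* \leq tW_t^\apx$, and consequently $\mu_t \in C_t^\apx$ inherits the $(1-\alpha)$ coverage from Proposition~\ref{prop:mixture-cs-tn}.

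The running-intersection claim under a constant conditional mean is immediate: the event $\{\mu \in C_t^\apx \text{ for all } t \geq t_0\}$ coincides with $\{\mu \in \bigcap_{j=t_0}^t C_j^\apx \text{ for all } t \geq t_0\}$, so $(\bigcap_{j=t_0}^t C_j^\apx)_{t \geq t_0}$ is again a $(1-\alpha)$-CS for $\mu$.
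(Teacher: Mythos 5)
Your proposal is correct and takes essentially the same route as the paper's own proof: you show $C_t^\mix\subseteq C_t^\apx$ by relaxing~\eqref{eq:It-explicit} via $\erf(x)\geq 1-\exp(-x^2)$ on the region $|y|\leq U_t$, justify that region using the hitting-time condition~\eqref{eq:t-condition} (the content of Lemma~\ref{lem:yt<Ut}), invert the relaxed bound to recover $W_t^\apx$, and inherit coverage from Proposition~\ref{prop:mixture-cs-tn}. Your treatment of the $\tfrac{1}{2}\exp(-2U_t)$ remainder when bounding $I(U_t;U_t)$ is exactly as loose as the paper's own (the term is negligible and can be handled by a marginally sharper bound on $\erf$ or a vanishing adjustment of $t_0$), so this is not a substantive difference.
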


Figure~\ref{fig:comparison} compares the width of  $C_t^\apx$ to $C_t^\prpl$ and $C_t^\hrms$ under a constant conditional mean.  Across all three distributions, the width of $C_t^\apx$ is mostly less than that of $C_t^\prpl$ at larger sample sizes. At smaller sample sizes (roughly $<1000$), its width is reliably larger. 
The width of $C_t^\hrms$ is larger than both $C_t^\apx$ and $C_t^\prpl$ across all three experiments. The gains of its iterated logarithm rates become apparent at much larger sample sizes ($>10^6$). 
The third column plots the sample size on a log scale to emphasize earlier timesteps and the good performance of $C_t^\prpl$. The second column plots the width on a log scale to emphasize the difference between the bounds at larger sample sizes.

Figure~\ref{fig:comparison} fixes $\kappa=0.25$, which we find to be a good default choice. Appendix~\ref{app:experiments} investigates the effect of $\kappa$ empirically, showing that as $\kappa$ grows, the widths of converge. This is in line with Remark~\ref{rem:alternative-dists}, which notes that the truncated Gaussian approaches a uniform distribution on $[-1,1]$ as $\kappa\to\infty$.

Figure~\ref{fig:time-varying} considers the case of a time-varying mean average conditional mean $\mu_t$. The left plot considers an abrupt change, drawing Bernoulli observations with mean 0.8 for the first $\approx 1000$ time steps, and then Bernoulli observations with mean 0.2 thereafter. (Recall that $\mu_t = \sum_{j\leq t} \E_{j-1}[X_j]$ is the average conditional mean until time $t$, not simply the mean at time $t$). The right plot considers a more continuous change of conditional mean. In both cases, we see that while both $C_t^\apx$ and $C_t^\hrms$ track $\mu_t$, $C_t^\prpl$ does not. 
The envelope of $C_t^\apx$ and $C_t^\hrms$ are remarkably close, especially at larger sample sizes. For the early timesteps we see that $C_t^\apx$ remains tighter than $C_t^\hrms$.

Figure~\ref{fig:exact-vs-approx} illustrates the width of the exact CS $C_t^\mix$ versus the relaxation $C_t^\apx$. The relaxation is remarkably tight, even after plotting the widths on a log-scale. Indeed, the differences are invisible to the naked eye. Unless differences on the order of thousandths are important for a particular application, we recommend using $C_t^\apx$ in practice when $t\geq t_0$ as it is both easier and faster to compute than $C_t^\mix$.

\subsection{Asymptotic width}
\label{sec:asymp-width}

One should think of $U_t$ as growing linearly (see Figure~\ref{fig:Ut-growth}) in which case the width of $C_t^\apx$ (and hence $C_t^\mix$) scales as $O(\sqrt{\log(t)/t})$. To get a more precise sense of how the width of $C_t^\apx$ (and hence $C_t^\mix)$ behaves,
we can compare it directly to $\sqrt{\log(t)/t}$ as follows. Throughout this section we assume a constant conditional mean $\mu = \E_{t-1}[X_t]$ for all $t$ and we assume that the observations are iid.  

\begin{lemma}
    \label{lem:precise-width}
    Let $(X_t)$ be drawn iid from $P$ with mean $\mu$. Let $W_t^\apx$ be the halfwidth of $C_t^\apx$ with $\hatX_t = t^{-1}(1/2 + \sum_{i<t}X_i)$. Then 
    \begin{equation}
    \label{eq:apx-asymp-width}
    W_t^\apx \sim A_t^\apx := 
     \sqrt{\frac{2 \E_P[\psi_E(|X - \mu|)]\log(t)}{t}}.%\quad P\text{-a.s.}  
    \end{equation}
    In particular, letting $R_t := W_t^\apx / A_t^\apx$, we have 
    \begin{equation}
    \label{eq:apx-asymp-ratio}
        R_t^2 = \frac{2U_t}{t\cdot \E_P[\psi_E(|X - \mu|)]}\left(\frac{\log(1/\alpha)}{\log(t)} +   \frac{1}{2} + \oas\left(1\right)\right). 
    \end{equation}
\end{lemma}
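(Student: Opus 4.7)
The plan is to reduce the lemma to the single almost-sure limit $U_t/t \to \sigma_E^2 := \E_P[\psi_E(|X - \mu|)]$ and then substitute into the closed-form expression $W_t^\apx = (2/t)\sqrt{U_t(\ell_\alpha + \tfrac12\log(2U_t))}$ from Theorem~\ref{thm:closed-form}. It suffices to treat $\mu\in(0,1)$, since $\sigma_E^2 = 0$ when $\mu\in\{0,1\}$ and the result is vacuous there.

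For the key limit, the SLLN yields $\hatX_i = i^{-1}(1/2 + \sum_{j<i}X_j) \to \mu$ a.s., so on a probability-one event there exist a random $N_0$ and a deterministic $c\in(0,\min(\mu,1-\mu))$ with $\hatX_i\in[c,1-c]$ for every $i\geq N_0$. Both $|X_i-\hatX_i|$ and $|X_i-\mu|$ then lie in $[0,1-c]$, on which $\psi_E$ is $L$-Lipschitz with $L=(1-c)/c$ (since $\psi_E'(x)=x/(1-x)$). Split
\begin{equation*}
\frac{U_t}{t} = \frac{1}{t}\sum_{i\leq t}\psi_E(|X_i-\mu|) + \frac{1}{t}\sum_{i\leq t}\bigl(\psi_E(|X_i-\hatX_i|) - \psi_E(|X_i-\mu|)\bigr) + \oas(1),
\end{equation*}
where the $\oas(1)$ absorbs $(2\kappa^2)^{-1}/t$ and the finite contribution from $i\leq N_0$. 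The first term tends to $\sigma_E^2$ by the SLLN, while the second is bounded, for $t\geq N_0$, by $L\cdot t^{-1}\sum_{i\leq t}|\hatX_i-\mu|$, which vanishes by Cesaro applied to $|\hatX_i-\mu|\to 0$.

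With $U_t/t\to\sigma_E^2$ (and hence $U_t\to\infty$ a.s.) established, expand $\ell_\alpha = \log(\kappa Z/\alpha) - \log(1-e^{-U_t/4}) = \log(1/\alpha) + \Oas(1)$ and $\tfrac12\log(2U_t) = \tfrac12\log t + \Oas(1)$, giving
\begin{equation*}
(W_t^\apx)^2 = \frac{4U_t}{t^2}\left(\tfrac12\log t + \log(1/\alpha) + \Oas(1)\right).
\end{equation*}
Dividing by $(A_t^\apx)^2 = 2\sigma_E^2\log(t)/t$ yields~\eqref{eq:apx-asymp-ratio} directly, and combining with $U_t/t\to\sigma_E^2$ then gives $R_t^2\to 1$, i.e.~\eqref{eq:apx-asymp-width}.

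The only delicate step is the convergence $U_t/t \to \sigma_E^2$: because $\psi_E$ blows up at $1$ there is no global Lipschitz bound valid over all of $[0,1]$, which forces the per-path truncation argument restricting to $i\geq N_0$. Everything else is routine asymptotic bookkeeping once this limit is in hand.
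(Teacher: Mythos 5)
Your proof is correct and follows essentially the same route as the paper: both reduce the lemma to the almost-sure limit $U_t/t \to \E_P[\psi_E(|X-\mu|)]$ by comparing $\psi_E(|X_i-\hatX_i|)$ with $\psi_E(|X_i-\mu|)$ on an event where $\hatX_i$ is eventually bounded away from the endpoints, and then perform the same asymptotic bookkeeping on $\ell_\alpha$ and $\tfrac12\log(2U_t)$ to obtain \eqref{eq:apx-asymp-ratio} and hence \eqref{eq:apx-asymp-width}. Your local-Lipschitz-plus-Ces\`aro control of the difference term is only a minor (and slightly cleaner) variant of the paper's mean-value-theorem step, so the two arguments are substantively identical.
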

The proof is in Appendix~\ref{proof:precise-width}. To get a sense of the size of $A_t^\apx$, one can show that $\sigma^2/2 \leq \E_P[\psi_E(|X-\mu|)] \leq \log(2) - 1/2\approx 0.194$, with Bernoulli(0.5) random variables reaching the upper bound. See Table~\ref{tab:EpsiE} for several examples. In particular, this shows that $\E_P[\psi_E(|X-\mu|)] \leq 1/2$ so $A_t^\apx \leq \sqrt{\log(t)/t}$. See Appendix~\ref{app:psiE-vs-sigma} for a discussion of the worst case ratio between $\E\psi_E(|X-\mu|)$ and $\sigma^2$ and, in particular, a proof that $\E\psi_E(|X-\mu|)\leq C_\mu \sigma^2$ for a constant $C_\mu$ depending on $\mu$.

An intriguing fact about Lemma~\ref{lem:precise-width} is that the asymptotic width of $C_t^\apx$ does not depend on $\alpha$. This is because $\log(1/\alpha)$ is attached to a lower order term in $W_t^\apx$ (it scales with $U_t$ as opposed to $U_t\log(U_t)$) and its influence thus gets washed out asymptotically. This fact is perhaps unsurprising given that we are using the method of mixtures. Indeed, Robbins' original mixture CS for sub-Gaussian observations exhibits the same behavior, which we demonstrate in Appendix~\ref{sec:robbins-mixture}. 

However, we emphasize that in practice the effect of $\alpha$ is not negligible. As shown by~\eqref{eq:apx-asymp-ratio}, the influence of $\log(1/\alpha)$ vanishes with rate $\log(t)$. Thus, even for sample sizes of $10^6$, we have $\log(1/\alpha) / \log(t) \approx \log(1/\alpha)/13.8$. For $\alpha = 0.01$, $\log(1/\alpha) \approx 4.6$, so this term will add $\approx 1/3$ to $R_t^2$ and hence $\approx 1/9$ to the halfwidth $W_t^\apx$ as compared to $A_t^\apx$.

In Appendix~\ref{app:wsr-width} we study the asymptotics of $(C_t^\prpl)$ and show that, if $W_t^\prpl$ is the halfwidth of $C_t^\prpl$ as given in~\eqref{eq:wsr-prpl} and $(X_t)$ are iid with mean $\mu$ drawn from $P$, then 
\begin{equation}
\label{eq:wsr-asymp-width}
        W_t^\prpl \sim A_t^\prpl := \frac{\sigma}{2} \sqrt{\frac{\log(2/\alpha) \log(t)}{2t}} (1 + \log\log(t)). %\quad P\text{-a.s.} 
\end{equation}

Note that unlike $A_t^\apx$, $A_t^\prpl$ does depend on $\alpha$. Further, while $A_t^\apx$ scales exactly as $\sqrt{\log(t)/t}$, $A_t^\prpl$ has an extra factor of $\log\log t$. Therefore, 
\begin{equation*}
    \lim_{t\to\infty} \sqrt{\frac{t}{\log t}} W_t^\apx = \Oas(1) \text{~ whereas ~} \lim_{t\to\infty} \sqrt{\frac{t}{\log t}} W_t^\prpl = \infty\quad P\text{-a.s.}
\end{equation*}
In words, $C_t^\apx$ will decay at a rate proportional to $\sqrt{\log(t)/t}$, whereas $C_t^\prpl$ decays at a strictly asymptotically larger rate. If we correct for the extra $\log\log(t)$ factor and examine the asymptotics, we obtain: 
\begin{equation}
    \sqrt{\frac{t}{\log t}} \frac{W_t^\prpl}{\log\log t} \xrightarrow[t\to\infty]{a.s.} \sqrt{\frac{\sigma^2\log(2/\alpha)}{8}}. %\quad P\text{-almost surely.}
\end{equation}
Unlike $W_t^\apx$, this width depends on $\alpha$. As we demonstrate in Appendix~\ref{app:wsr-width}, this can be avoided if one chooses $\lambda_t$ independently of $\log(1/\alpha)$, in which case one obtains the limiting width $\sqrt{\frac{t}{\log t}} \frac{W_t^\prpl}{\log\log t} \xrightarrow{t\to\infty} \sqrt{\sigma^2/8}$ instead. But in this case the factor $\log(1/\alpha)$ disappears at rate $1/\log\log(t)$, which is extremely slow. For practical purposes therefore, the choice of $\lambda_t$ made by \citet{waudby2024estimating} performs much better. 

%The displays~\eqref{eq:apx-asymp-width} and~\eqref{eq:wsr-asymp-width} can also help us understand why $C_t^\apx$ is tighter for larger sample sizes. For $t\geq 2000$, we have $\log\log(t)\approx 2$ or larger.  And for $\alpha = 0.01$, $\log(2/\alpha) \approx 5.3$. Therefore $W_t^\prpl \approx \sigma \sqrt{\log(t)/t} \cdot \sqrt{5.3/8}(1 + 2) \approx 2.4 \sigma \sqrt{\log(t)/t}$.  Meanwhile, $\sigma^2/2 \leq \E[\psi_E(|X- \mu|)]\leq 1/2 - \sigma^2$, where the upper bound reaches equality for Bernoullis. Thus, for Bernoulli(0.5) for instance, we have $\sigma^2=1/4$ so $W_t^\apx \approx 0.71 \sqrt{\log(t)/t}$ and $W_t^\prpl \approx 1.2\sqrt{\log(t)/t}$. 

While we have compared the width of $C_t^\apx$ to $\sqrt{\log(t)/t}$, the 
the law of the iterated logarithm dictates that the optimal rate at which a CS can shrink towards 0 width is $\Theta(\sqrt{\log\log(t)/t})$~\citep{darling1967confidence,stout1970hartman}. 
To obtain such rates, we can turn to a method known as ``stitching''~\citep{howard2021time}. 
We note, however, that this is mostly of theoretical interest. Iterated logarithm bounds, while being asymptotically tighter than the $\sqrt{\log t}$ bounds, tend to be looser at practically achievable sample sizes, only displaying their tightness at much larger sample sizes.  

\begin{table}[t]
    \centering
    \begin{tabular}{c|ccccc}
    & Ber(0.5) & Ber(0.1) & Unif(0,1) & Beta(5,2) & Beta(10,30) \\
    \hline 
         $\sigma^2/2$& 0.125 & 0.045 &  $\approx 0.042$  & $\approx 0.013$ & $\approx 0.002$  \\
          $\E[\psi_E(|X - \mu|)] $ & $\approx 0.194$ & $\approx 0.144$ & $\approx 0.057$ & $\approx 0.016$ & $\approx 0.002$
    \end{tabular}
    \caption{Comparison of $\E[\psi_E(|X-\mu|)$ with its lower bound of $\sigma^2/2$ for several distributions. As the variance decreases, $\E[\psi_E(|X-\mu|)]$ approaches $\sigma^2/2$. }
    \label{tab:EpsiE}
\end{table}

\section{Iterated Logarithm Rates}
\label{sec:stitching}

As usual, let $(X_t)_{t\geq 1}$ lie in $[0,1]$ and let $(\Xhat_t)\subseteq[0,1)$ be predictable. 
For a fixed $\xi\in[-1,1]$ recall that Section~\ref{sec:new-nsm} showed that $B_t(\mu_t,\xi) = \prod_{i\leq t}\exp\{ \xi(X_t - \mu_t) - \xi^2 \psi_E(|X_t-\Xhat_t|)\}$ is a nonnegative supermartingale. Applying Ville's inequality to both $(B_t(\mu,\xi))$ and $(B_t(\mu;-\xi))$ and taking a union bound we obtain that with probability $1-\alpha$, for all $t\geq 1$,  
\begin{equation}
\label{eq:fixed-xi-bound}
	|\Xbar_t - \mu| \leq \frac{|\xi| V_t}{t} + \frac{\log(2/\alpha)}{t|\xi|}, 
\end{equation}
where, as usual, $V_t = \sum_{i\leq t} \psi_E(|X_i - \hatX_i|)$. To obtain a confidence sequence which scales as $O(\sqrt{\log\log(t)/t})$ instead of $O(\sqrt{\log(t)/t})$, we apply~\eqref{eq:fixed-xi-bound} in geometrically spaced epochs $[\eta^j,\eta^{j+1})$, $j=0,1,\dots$, choosing a different parameter $\xi$ in each epoch. This technique, known as stitching, was popularized by \citet{howard2021time} and has been frequently used in recent works to achieve optimal rates (e.g., \citep{whitehouse2025time,waudby2024estimating,chugg2023unified,chugg2025time,orabona2023tight}).   

Stitching is particularly smooth in our case, owing to the sub-Gaussian style behavior of $\xi$ in the bound.  As we've described, other empirical Bernstein bounds have the free parameters $\xi$ inside $\psi_E$, and it's challenging to stitch over $\psi_E$ directly. Indeed, for this reason \citet{howard2021time} only provide stitching for sub-gamma distributions. One can always relax $\psi_E$ to a sub-gamma tail bound by noticing that $\psi_{E,1}(\xi) \leq \frac{\xi^2}{2(1-\xi)}$  but this will result in a looser CS. 

The following result relies on three parameters. The first is the scalar $\eta>1$ which, as above, controls the geometry of the epochs. The second is a parameter $L_0$ which controls the minimum time at which the bound starts. The third is a ``stitching function'' $h:\Re_{\geq 0}\to \Re_{\geq 0}$, which controls how much mass is placed on each epoch in the union bound. This function must be increasing, strictly positive, and obey $\sum_{j\geq 0} h^{-1}(j)\leq 1$. 
A reasonable choice, and one often used, is $h(j) = \zeta(s)(j+1)^s$ where $\zeta$ is the Riemann-zeta function and $s>1$. This is the choice that was made when presenting $C_t^\hrms$  in~\eqref{eq:hrms}. 

\begin{theorem}
	\label{thm:stitching} 
	Let $h:\Re_{\geq 0}\to\Re_{\geq 0}$ be increasing and satisfy $\sum_{j=0}^\infty 1/h(j) \leq 1$. Fix $\eta>1$. 
	Let $(\hatX_t)$ be a predictable sequence in $[0,1)$ and set $\alpha\in(0,1)$ and $L_0\geq 1$. 
	Then, with probability $1-\alpha$, simultaneously for all $t$ such that $V_t \geq L_0$ and $h(\log_\eta(V_t/L_0))\leq \frac{\alpha}{2}\exp(V_t)$, 
    \begin{equation}
        |\Xbar_t - \mu_t| \leq \frac{\sqrt{\eta}+1}{t} \sqrt{V_t \left(\log\left(\frac{2}{\alpha}\right) + \log h\left(\log_\eta\left(\frac{V_t}{L_0}\right)\right)\right)}.
    \end{equation}
\end{theorem}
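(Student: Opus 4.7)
I will follow the standard stitching template. Equation~\eqref{eq:fixed-xi-bound} already gives, for any \emph{fixed} $\xi\in[-1,1]$, the two-sided bound $|\Xbar_t-\mu_t|\leq |\xi|V_t/t + \log(2/\alpha)/(t|\xi|)$ uniformly in $t$ with probability $1-\alpha$ (via Ville's inequality on $B_t(\mu_t,\pm\xi)$). I will invoke this repeatedly with a geometrically indexed schedule of $\xi$-values, one per epoch of $V_t$. Partition the possible values of $V_t$ into epochs $\mathcal{E}_j:=[L_0\eta^j, L_0\eta^{j+1})$, $j\geq 0$. For each $j$, set $\alpha_j:=\alpha/h(j)$ (so $\sum_{j\geq 0}\alpha_j\leq \alpha$ by the summability hypothesis on $h$), and choose
\begin{equation*}
\xi_j := \sqrt{\log(2/\alpha_j)/(L_0\eta^{j+1})}.
\end{equation*}
Apply~\eqref{eq:fixed-xi-bound} at level $\alpha_j$ for each $j$ with $\xi_j\leq 1$ and union bound: the resulting event holds with probability $\geq 1-\alpha$ and implies the fixed-$\xi$ bound for every such $j$, simultaneously for all $t$.

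Now consider any $t$ with $V_t\geq L_0$ and $h(\log_\eta(V_t/L_0))\leq(\alpha/2)\exp(V_t)$, and let $j$ be the (unique) epoch with $V_t\in\mathcal{E}_j$. Since $j\leq \log_\eta(V_t/L_0)$ and $h$ is increasing, $h(j)\leq h(\log_\eta(V_t/L_0))\leq(\alpha/2)\exp(V_t)\leq(\alpha/2)\exp(L_0\eta^{j+1})$, which rearranges to $\xi_j\leq 1$. Thus the fixed-$\xi$ bound is valid for this $j$ on our event. Substituting $\xi_j$ and factoring,
\begin{equation*}
\xi_j V_t + \log(2/\alpha_j)/\xi_j = \sqrt{V_t\log(2/\alpha_j)}\bigl(\sqrt{V_t/(L_0\eta^{j+1})} + \sqrt{L_0\eta^{j+1}/V_t}\bigr) \leq (1+\sqrt{\eta})\sqrt{V_t\log(2/\alpha_j)},
\end{equation*}
since $V_t/(L_0\eta^{j+1})\leq 1$ and $L_0\eta^{j+1}/V_t\leq \eta$. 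Finally, $\log(2/\alpha_j)=\log(2/\alpha)+\log h(j)\leq \log(2/\alpha)+\log h(\log_\eta(V_t/L_0))$ because $h$ is increasing, and dividing by $t$ yields the stated inequality.

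\textbf{Main obstacle.} The construction is a direct stitching argument, so there is little technical difficulty. The one point requiring care is the choice of $\xi_j$: anchoring at the \emph{upper} endpoint $L_0\eta^{j+1}$ of the epoch (rather than at the lower endpoint or a midpoint) together with the symmetric factorization above is what delivers the constant $\sqrt{\eta}+1$; the more obvious choices yield $2\sqrt{\eta}$, which is strictly worse for $\eta>1$. Everything else---summability of $\alpha_j$, extraction of the $\log h$ term, and translation of $\xi_j\leq 1$ into the stated hypothesis on $t$---is bookkeeping.
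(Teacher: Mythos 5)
Your proof is correct and follows essentially the same route as the paper: the same epoching of $V_t$ into $[L_0\eta^j,L_0\eta^{j+1})$, the same choice $\xi_j=\sqrt{\log(2/\alpha_j)/(L_0\eta^{j+1})}$ with $\alpha_j=\alpha/h(j)$, the same union bound, and the same use of the hypothesis on $t$ to guarantee $\xi_j\leq 1$ (the paper simply truncates $\xi_j$ at $1$ and then checks the truncation is inactive). The only cosmetic difference is that you factor the two terms jointly rather than bounding $\xi_j V_t$ and $\xi_j^{-1}\log(2/\alpha_j)$ separately, which yields the identical constant $\sqrt{\eta}+1$.
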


The proof is in Appendix~\ref{proof:stitching}. Let us note that the constraint on $t$ is not very demanding. Indeed, $h(j)$ is typically a polynomial in $j$, hence $h(\log_\eta(V_t/L_0))\leq \frac{\alpha}{2}\exp(V_t)$ is achieved quickly. 

To analyze the width of this bound, let us take $L_0=1$, and $h(j) = \zeta(s)(j+1)^s$. Then $\log(h(\log_\eta(V_t)) = \log(\zeta(s)) + s\log(\log_\eta(V_t) + 1) = \log(\zeta(s) /\log^r(\eta)) + \log\log(\eta V_t))$ so  we have that with probability $1-\alpha$, simultaneously for all $t$ large enough, $|\Xbar_t - \mu_t|<W_t^\stitch$ where 
\begin{equation}
    W_t^\stitch := \frac{\sqrt{\eta}+1}{t}\sqrt{V_t \left(\log\left(\frac{2\zeta(s)}{\alpha\log^s(\eta)}\right) + s\log\log(\eta V_t)\right)}.
\end{equation}
To see explicitly that this bound has iterated logarithm rates, let us compare it against $\sqrt{\log\log(t)/t}$. 
Suppose that we are in the fixed mean setting, so $\mu_t = \mu$. 
First note that $V_t \sim tu$ $P$-almost surely (by calculations in the previous section), where $u = \E_P[\psi_E(|X- \mu|)]$. Therefore,  $\log\log(\eta V_t) \sim \log\log(\eta tu) = \log(\log(t) (1 + \log(\eta u)/\log(t))) - \log\log(2) \sim \log\log(t)$, and 
\begin{align*}
    \sqrt{\frac{t}{\log\log t}} W_t^\stitch &= (\sqrt{\eta}+1) \sqrt{\frac{V_t \log(2\zeta(s) / \alpha\log^s(\eta)) + rV_t \log\log(\eta V_t)}{t\log\log(t)}} \\ 
    &\sim (\sqrt{\eta}+1) \sqrt{\frac{s tu \log\log(t)}{t\log\log (t)}} 
    = (\sqrt{\eta s}+\sqrt{s})\sqrt{u}. 
\end{align*}
Letting $\eta$ and $s$ tend towards 1, we see that $W_t^\stitch \sim 2\sqrt{u \log\log(t)/t}$. Of course, in practice, small values of $\eta$ and $s$ inflate the term $2\zeta(s)/(\alpha\log^s(\eta))$ which has a noticeable effect on the bound. We suggest using $s=1.4$ and $\eta=2$ in experiments. These are also the parameters suggested by \citet{howard2021time}. 

\begin{figure}[t]
	\centering
    \begin{subfigure}[t]{0.45\linewidth}
        \includegraphics[height=5cm]{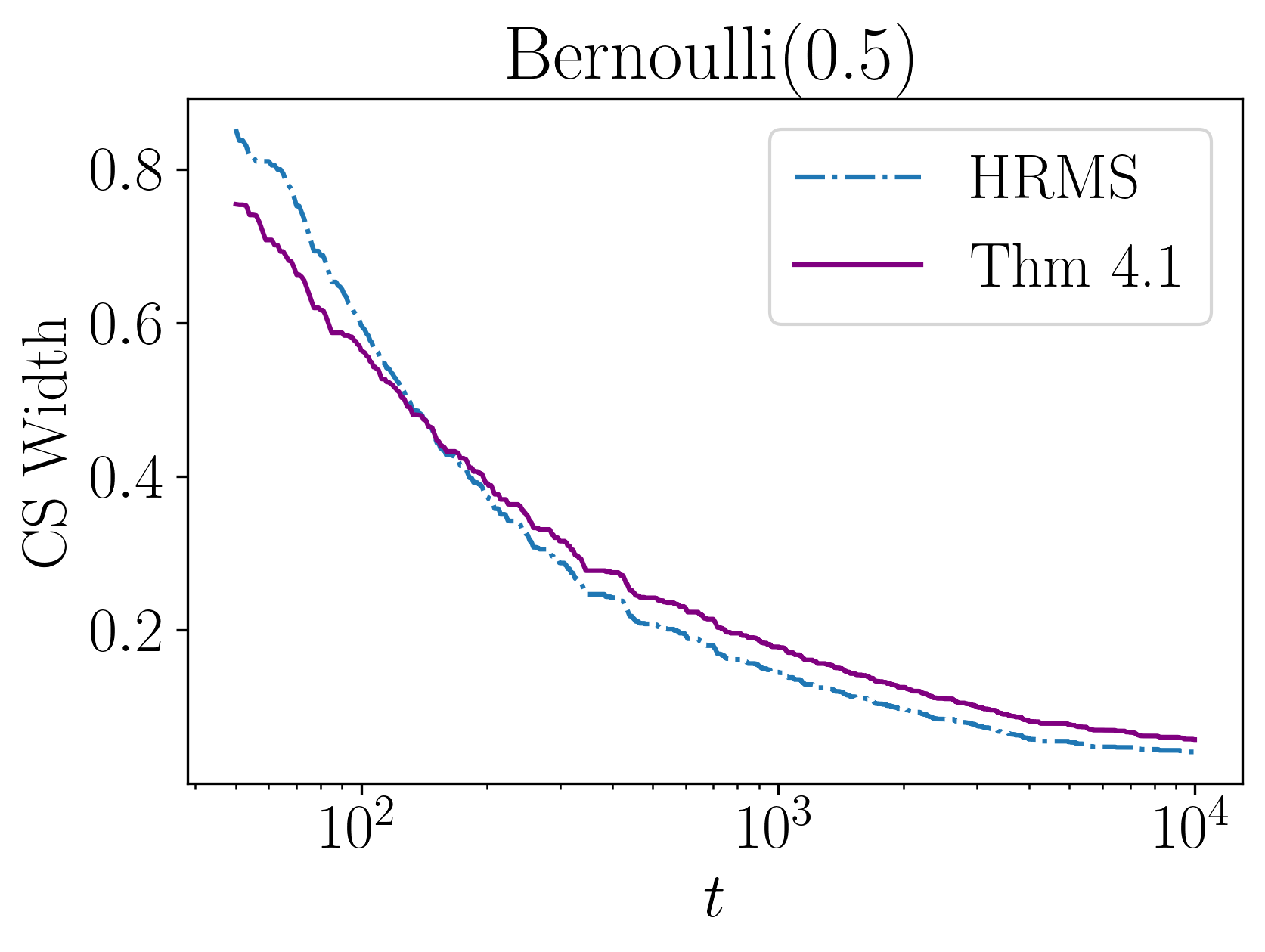}  
    \end{subfigure}
    \begin{subfigure}[t]{0.45\linewidth}
        \includegraphics[height=5cm]{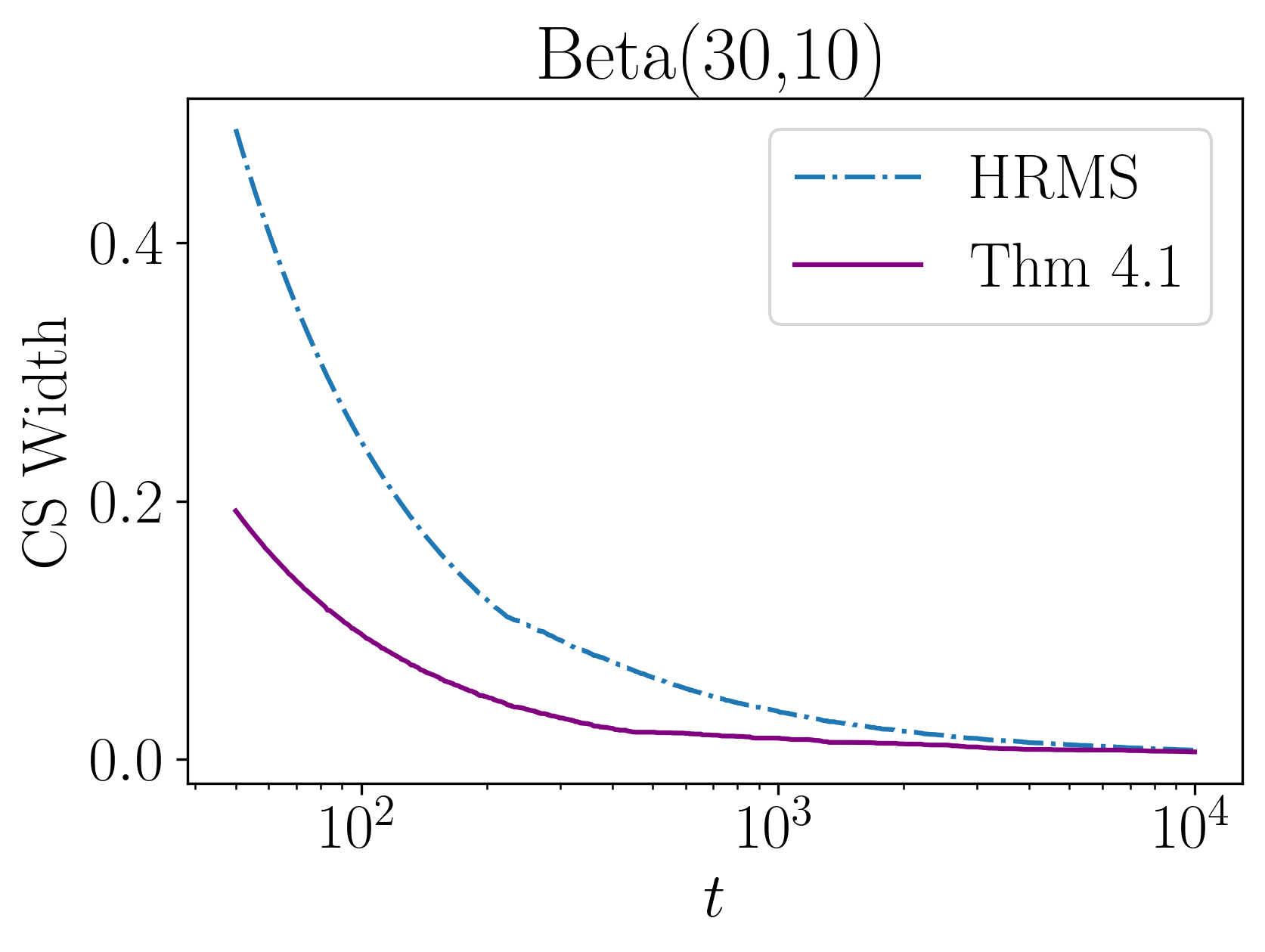}
    \end{subfigure}    
	\caption{
    Comparison of Theorem~\ref{thm:stitching} and $C_t^\hrms$, the two CSs which have iterated logarithm rates. Theorem~\ref{thm:stitching} is implemented with $h(j) = \zeta(s)(j+1)^s$ and we use $\eta=2$, $s=1.4$ for both bounds. While $C_t^\stitch$ is tighter than $C_t^\hrms$ at smaller sample sizes, $C_t^\hrms$ is tighter in the limit as discussed in the text. 
    } 
    \label{fig:stitching}
\end{figure}

Like $W_t^\apx$, the width of our stitched CS scales independently of $\alpha$ asymptotically. Meanwhile, 
the empirical Bernstein CS of \citet{howard2021time} satisfies $W_t^\hrms \sim \sqrt{2\sigma^2 \log\log(t)/t}$. Recalling that $u\geq \sigma^2/2$, this has smaller constants than our asymptotic width, though the two expressions approach each other as the variance of the distribution decreases. 

Figure~\ref{fig:stitching} plots $C_t^\stitch$ against $C_t^\hrms$. As expected from the preceding paragraph, $C_t^\hrms$ is tighter at large sample sizes. At small to moderate sample sizes, however, $C_t^\stitch$ is often tighter, sometimes significantly so. To explain this phenomenon, it helps to recall that our bound looks sub-Gaussian in terms of the free parameter $\xi$. To elaborate, if $(X_t)$ is $\sigma$-sub-Gaussian, then with probability $1-\alpha$, $|\Xbar_t - \mu_t|\leq \xi V'_t/t + \log(2/\alpha)/(t\xi)$ where $V'_t = t\sigma^2$ for all $\xi>0$. This is comparable to~\eqref{eq:fixed-xi-bound} modulo a missing factor of 2 on the first term and the fact that $\xi$ can range over all positive reals. (This final fact makes no difference as soon as the sample size is large enough, as the optimal $\xi$ for $t$ observations is a decreasing function of $t$.) 

%That $\xi$ is restricted to $(0,1]$ in~\eqref{eq:fixed-xi-bound} does not effect the argument much however, 
%not affect the bound, since the parameters are chosen to be small in the stitching argument.  

Thus, we are effectively stitching over a sub-Gaussian tail, while \citet{howard2021time} are stitching over a sub-gamma tail, which involves an extra linear term. This linear term vanishes at rate $O(\log\log(t)/t)$ from the bound, however, as opposed to $O(\sqrt{\log\log(t)/t})$,  thus only has a noticeable effect at smaller sample sizes. In other words, at larger sample sizes the bound is primarily affected by the size of the constants on the leading term, which are slightly larger in our case. 

We reiterate that while Theorem~\ref{thm:stitching} is theoretically satisfying, it is empirically looser than Theorem \ref{thm:closed-form} at most realistic sample sizes.  In practice we recommend using the latter.

\section{Extension to matrix empirical Bernstein bounds}
\label{sec:matrix}

Here we extend the techniques developed above to give a novel empirical Bernstein CS for the maximum eigenvalue of matrices. Let $\calS_d$ denote the set of $d\times d$ symmetric matrices with real entries. We consider a stream $(\bX_t)$ of $\calS_d$-valued random matrices with eigenvalues in $[0,1]$. Let $\bM_t = t^{-1}\sum_{i\leq t} \E_{i-1}[\bM_i]$ be the average conditional mean at time $t$ and $\bX_t = t^{-1}\sum_{i\leq t} \bX_i$ be the running average of the matrices at time $t$. We denote the maximum and eigenvalues of a matrix $\bX\in\calS_d$ by $\gamma_{\max}(\bX)$ and $\gamma_{\min}(\bX)$ respectively. 

Following \citet{tropp2012user,tropp2015introduction}, we are interested in providing bounds on $\gamma_{\max}(\bXbar_t - \bM_t)$. Unfortunately, naively following the strategy developed in Section~\ref{sec:new-nsm} runs into trouble almost immediately. In particular, while a matrix version of the modified Fan's inequality in Lemma~\ref{lem:extended-Fan} exists, a modified Howard's inequality does not (nor does a matrix version of the original Howard's inequality). Inspecting the proof of Lemma~\ref{lem:modified-howard} reveals that it uses the commutativity of real numbers; a property that cannot, of course, be lifted to the matrix setting. 

Instead, we rely on a result first proved by \citet{howard2020time} which gives conditions under which three sequences of matrices forms a supermartingale. We leverage this result and some elementary properties of eigenvalues to prove the following lemma. The proof is in Section~\ref{sec:proof-matrix-nsm}. 

\begin{lemma}
\label{lem:matrix-nsm}
   Let $(\bX_t)$ be a stream of random matrices in $\calS_d$ with eigenvalues in $[0,1]$. Let $(\b{\Xhat}_t)$ be a predictable sequence of matrices in $\calS_d$ with eigenvalues in $[0,1)$. Let $(\xi_t)$ be a predictable sequence in $[-1,1]$. Then $(L_t)_{t\geq 0}$ is a nonnegative supermartingale where  
   \begin{equation}
       L_t := \Tr\exp\left\{\sum_{i=1}^t \xi_i(\bX_i - \E_{i-1}[\bX_i]) - \sum_{i=1}^t \xi_i^2\psi_E(|\bX_i - \b{\Xhat}_i|)\right\},
   \end{equation}
   and $L_0 = d$. 
   Moreover, for any density $\rho$ over $[-1,1]$, $(N_t)_{t\geq 0}$ is upper bounded by a supermartingale with initial value $d$, where 
   \begin{equation}
       N_t := \int \exp\left\{\xi \gamma_{\max}(\bS_t - t\bM_t) - \xi^2 \gamma_{\max}(\bV_t) \right\}\rho(\xi)\d\xi,
   \end{equation}
   where 
   \begin{equation}
       \bS_t := \sum_{i\leq t}\bX_i, \quad \bM_t := \frac{1}{t}\sum_{i\leq t}\E_{i-1}[\bX_i], \quad \bV_t = \sum_{i\leq t} \psi_E(|\bX_i - \bhatX_i|).
   \end{equation}
\end{lemma}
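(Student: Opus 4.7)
The proof splits into two claims. For the first---that $(L_t)$ is a nonnegative supermartingale with $L_0 = d$---the plan is to invoke the three-sequence matrix supermartingale result of \citet{howard2020time}, after first establishing a matrix version of Lemma~\ref{lem:extended-Fan}. Concretely, for any $\b{Z}\in\calS_d$ with eigenvalues in $[-1,1]$ and any $\lambda\in(-1,1)$, applying the scalar modified Fan inequality eigenvalue-by-eigenvalue yields the operator inequality
\begin{equation*}
\exp\{\lambda\b{Z} - \psi_E(|\lambda|)\b{Z}^2\} \preceq I + \lambda\b{Z}.
\end{equation*}
Specializing to $\b{Z} = \bX_t - \bhatX_t$ (whose eigenvalues lie in $(-1,1)$ by hypothesis) and $\lambda = \xi_t$ supplies the driving operator inequality. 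The \citet{howard2020time} apparatus then absorbs the \emph{non-predictable} variance proxy $\xi_t^2\psi_E(|\bX_t - \bhatX_t|)$ by executing its supermartingale reduction at the level of the trace exponential via Lieb's concavity theorem. Nonnegativity is immediate and $L_0 = \Tr(I_d) = d$.

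For the second claim, the plan is to pointwise dominate $N_t$ by an integral of trace-exponential supermartingales and then apply Fubini. Two elementary eigenvalue facts are needed. First, for any symmetric $\b{A}$, positive semidefinite $\b{B}$, and $\xi\in\Re$, testing $\xi\b{A} - \xi^2\b{B}$ against the unit eigenvector of $\b{A}$ realizing $\gamma_{\max}(\b{A})$ (when $\xi\geq 0$) or $\gamma_{\min}(\b{A})$ (when $\xi<0$) shows
\begin{equation*}
\xi\gamma_{\max}(\b{A}) - \xi^2\gamma_{\max}(\b{B}) \leq \gamma_{\max}(\xi\b{A} - \xi^2\b{B}).
\end{equation*}
Second, $\exp(\gamma_{\max}(\b{C})) \leq \Tr\exp(\b{C})$ for any $\b{C}\in\calS_d$. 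Combining these with $\b{A} = \bS_t - t\bM_t$ and $\b{B} = \bV_t$ yields
\begin{equation*}
\exp\{\xi\gamma_{\max}(\bS_t - t\bM_t) - \xi^2\gamma_{\max}(\bV_t)\} \leq \Tr\exp\{\xi(\bS_t - t\bM_t) - \xi^2\bV_t\} =: \widetilde L_t(\xi).
\end{equation*}
Choosing $\xi_i\equiv\xi$ in the definition of $L_t$ identifies $\widetilde L_t(\xi)$ with $L_t$, which is a supermartingale with initial value $d$ by the first claim. Fubini then gives $N_t \leq N_t' := \int_{-1}^1 \widetilde L_t(\xi)\rho(\xi)\d\xi$, and $(N_t')$ is itself a supermartingale with $N_0' = d\int_{-1}^1\rho(\xi)\d\xi = d$.

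The main obstacle is part~1. The scalar proof of Lemma~\ref{lem:modified-howard} folds the offset $\delta = \hatX - \E X$ out of the exponential through the commutative identity $e^{\delta\xi}(1-\delta\xi) \leq 1$, which has no direct matrix analogue (and is explicitly flagged in the paragraph preceding the lemma). The \citet{howard2020time} framework sidesteps the need for a matrix modified Howard inequality by leveraging Lieb's concavity at the trace level, so the bulk of the work is in fitting our three adapted matrix sequences---signed increments, conditional mean offsets, and non-predictable variance proxies---into the form required by their theorem and checking the operator inequality that feeds into it.
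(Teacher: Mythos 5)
Your treatment of the second claim matches the paper's proof essentially step for step (test $\xi(\bS_t-t\bM_t)-\xi^2\bV_t$ against a top unit eigenvector, use $\exp(\gamma_{\max}(\cdot))\leq\Tr\exp(\cdot)$, fix $\xi_i\equiv\xi$ to identify the dominating process with $L_t$, then apply Fubini), and your high-level plan for the first claim---a matrix transfer of the modified Fan inequality fed into the three-sequence supermartingale lemma of \citet{howard2020time} (as generalized by \citet{wang2024sharp})---is also the paper's route. However, the concrete operator inequality you write down is the wrong specialization. Applying the scalar inequality eigenvalue-by-eigenvalue in the $\xi$-slot, with $\b{Z}=\bX_t-\bhatX_t$ and $\lambda=\xi_t$, yields
\begin{equation*}
\exp\left\{\xi_t(\bX_t-\bhatX_t)-\psi_E(|\xi_t|)(\bX_t-\bhatX_t)^2\right\}\preceq \b{I}_d+\xi_t(\bX_t-\bhatX_t),
\end{equation*}
whose variance proxy is $\psi_E(|\xi_t|)(\bX_t-\bhatX_t)^2$. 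That is the Wang--Ramdas-style term; pushing it through the \citet{howard2020time} machinery produces a supermartingale with that exponent, not the stated $L_t$, which requires $\xi_t^2\psi_E(|\bX_t-\bhatX_t|)$ (scalar squared outside, matrix inside $\psi_E$---this ``flip'' is precisely the point of the paper's construction). The correct step is to fix $\xi\in[-1,1]$ and apply Tropp's transfer rule in the \emph{other} slot, i.e.\ to the scalar map $\lambda\mapsto\exp\{\xi\lambda-\xi^2\psi_E(|\lambda|)\}\leq 1+\xi\lambda$ on $(-1,1)$, evaluated at the matrix $\bX_t-\bhatX_t$ (eigenvalues in $(-1,1)$). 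Note also that your specialization would force $\xi_t\in(-1,1)$ strictly, whereas the lemma permits $\xi_t\in[-1,1]$.

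Beyond this, the reduction to Howard's lemma is asserted rather than carried out: after the (corrected) operator inequality one must take conditional expectations, upgrade $\b{I}_d+\xi_t(\E_{t-1}[\bX_t]-\bhatX_t)$ to $\exp\{\xi_t(\E_{t-1}[\bX_t]-\bhatX_t)\}$ (transfer rule again, since $1+x\leq e^x$), and exhibit the three sequences, e.g.\ $\b{A}_t=\xi_t(\bX_t-\E_{t-1}[\bX_t])$ (martingale differences), $\b{B}_t=\xi_t(\bhatX_t-\E_{t-1}[\bX_t])+\xi_t^2\psi_E(|\bX_t-\bhatX_t|)$ (adapted), and the predictable $\b{C}_t=\xi_t(\E_{t-1}[\bX_t]-\bhatX_t)$, so that $\b{A}_t-\b{B}_t-\b{C}_t$ equals the $t$-th increment of the exponent of $L_t$. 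This bookkeeping is routine once the driving inequality has the right form, but as written your inequality proves a different supermartingale than the one claimed.
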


As in Section~\ref{sec:mom} one can take $\rho$ to be a truncated Gaussian and write down the equivalent of Proposition~\ref{prop:mixture-cs-tn} (or of Remark~\ref{rem:alternative-dists}). Such a bound is useless in the matrix case however, as an implicit bound requires searching over the space of symmetric matrices. Instead, let us turn immediately to the closed-form approximation. The proof of Theorem~\ref{thm:eb-matrix} is in Appendix~\ref{proof:eb-matrix}. 

\begin{theorem}
\label{thm:eb-matrix}
Let $(\bX_t)$ be a sequence of random matrices in $\calS_d$ with eigenvalues in $[0,1]$ with average conditional mean $\bM_t$. Let $(\bhatX_t)$ be any predictable sequence of matrices in $\calS_d$ with eigenvalues in $[0,1)$. Fix $\kappa>0$ and take $Z = \Phi(1/\kappa) - \Phi(-1/\kappa)$. Set $U_t = (2\kappa^2)^{-1} + \gamma_{\max}(\bV_t)$.  
and $G_\alpha = \kappa Z \sqrt{2\pi}/\alpha$. 
Let $t_0$ be the minimum time $t$ at which 
    \begin{equation}
        \sqrt{\frac{\pi}{U_{t}}}\left( \exp\bigg(\frac{bU_{t}}{4}\bigg)  -  \frac{1}{2}\right) \geq G_\alpha. 
    \end{equation}
    Then, with probability $1-\alpha$,  simultaneously for all $t\geq t_0$, 
    \begin{equation}
    \left|\gamma_{\max}(\bXbar_t - \bM_t)\right| \leq W_t^\mat, \quad W_t^\mat := 
    \frac{2}{t}\sqrt{U_t\left( \ell_{\alpha,d}  + \frac{1}{2}\log(2U_t)\right)},
    \end{equation}
    where 
    \begin{equation}
        \ell_{\alpha,d} := \log\left(\frac{d\kappa Z / \alpha}{1 - \exp(-U_t/4)}\right).
    \end{equation}
\end{theorem}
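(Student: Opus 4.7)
The plan is to adapt the proof of Theorem~\ref{thm:closed-form} to the matrix setting, with Lemma~\ref{lem:matrix-nsm} taking the role of the scalar supermartingale. The mixing distribution is the same truncated Gaussian on $[-1,1]$ with variance $\kappa^2$ used in Proposition~\ref{prop:mixture-cs-tn}, and the closed-form approximation reduces to exactly the algebraic manipulations of the scalar case, with one modification: an extra factor of $d$ carried through from Ville's threshold.

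First I would instantiate Lemma~\ref{lem:matrix-nsm} with the density $\rho(\xi) = (\kappa Z \sqrt{2\pi})^{-1}\exp(-\xi^2/(2\kappa^2))$ supported on $[-1,1]$, where $Z = \Phi(1/\kappa) - \Phi(-1/\kappa)$. Completing the square in $\xi$ and recalling the definition of $I(y;v)$ from Section~\ref{sec:mom}, one finds
\begin{equation*}
N_t = \frac{1}{\kappa Z \sqrt{2\pi}}\, I\bigl(\gamma_{\max}(\bS_t - t\bM_t);\, U_t\bigr),
\end{equation*}
with $U_t = (2\kappa^2)^{-1} + \gamma_{\max}(\bV_t)$, exactly as in the theorem statement. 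Lemma~\ref{lem:matrix-nsm} says $N_t$ is dominated by a nonnegative supermartingale with initial value $d$, so applying Ville's inequality to that dominating supermartingale gives $P(\exists t: N_t \geq d/\alpha) \leq \alpha$; equivalently, with probability $1-\alpha$ simultaneously for all $t$,
\begin{equation*}
I\bigl(\gamma_{\max}(\bS_t - t\bM_t);\, U_t\bigr) < d\, G_\alpha.
\end{equation*}

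Next I would invert this implicit bound by replaying the closed-form step from Theorem~\ref{thm:closed-form}. Writing $y_t := \gamma_{\max}(\bS_t - t\bM_t)$, the threshold condition defining $t_0$ guarantees that, for $t \geq t_0$, the solution $y$ to $I(y;U_t) = dG_\alpha$ satisfies $|y|\leq U_t$, so both $\erf$ arguments in the explicit representation~\eqref{eq:It-explicit} are nonnegative. Applying $\erf(x) \geq 1 - \exp(-x^2)$ to each and solving the resulting scalar quadratic inequality for $|y_t|$ yields
\begin{equation*}
|y_t| \leq 2\sqrt{U_t\bigl(\ell_{\alpha,d} + \tfrac{1}{2}\log(2U_t)\bigr)}.
\end{equation*}
The factor $d$ travels cleanly through this algebra, appearing only inside the logarithm and producing $\ell_{\alpha,d}$ in place of the scalar's $\ell_\alpha$.

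Finally, because $I(y;v)$ is an even function of $y$ (visible from~\eqref{eq:It-explicit}), the preceding display is in fact a two-sided bound on $y_t$. Dividing by $t$ and using positive homogeneity $\gamma_{\max}(cA) = c\,\gamma_{\max}(A)$ for $c>0$ yields the claimed bound on $|\gamma_{\max}(\bXbar_t - \bM_t)|$. The only genuinely new ingredient relative to the scalar proof is Lemma~\ref{lem:matrix-nsm} itself, which bypasses the noncommutativity obstacle flagged in the discussion preceding its statement; once that lemma is in hand, the main thing to verify in the present theorem is that the extra factor $d$ from Ville's inequality enters only inside the logarithm of the closed-form approximation, giving $\ell_{\alpha,d}$ rather than inflating the leading $\sqrt{U_t \log U_t}$ term.
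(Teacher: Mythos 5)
Your proposal is correct and follows essentially the same route as the paper's own proof: mix the matrix supermartingale of Lemma~\ref{lem:matrix-nsm} against the truncated Gaussian to get $N_t = (\kappa Z\sqrt{2\pi})^{-1} I(\gamma_{\max}(\bS_t - t\bM_t); U_t)$, apply Ville's inequality at threshold $d/\alpha$ (since the dominating supermartingale starts at $d$), and then invert $I$ exactly as in Theorem~\ref{thm:closed-form} via Lemma~\ref{lem:yt<Ut} and $\erf(x)\geq 1-\exp(-x^2)$, with the dimension factor $d$ entering only inside the logarithm as $\ell_{\alpha,d}$. Your closing observations about evenness of $I$ in $y$ and positive homogeneity of $\gamma_{\max}$ match the paper's use of monotonicity of $I$ in $|y|$, so nothing essential is missing.
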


Like we did in the scalar case, we can analyze the asymptotic width of this bound. 
Let us suppose that the matrices are iid with mean $\bM$, so that $\bM_t  = \bM$ for all $t$. 

\begin{proposition}
\label{prop:mat-asymp-width}
    Let $(\bX_t)$ be a sequence of random matrices in $\calS_d$ drawn iid from $P$ with eigenvalues in $[0,1]$ and mean $\bM$. Let $\bhatX_t = t^{-1} (\b{I}_d/2 + \sum_{j<t} \bX_j)$. Then 
     \begin{equation}
    \label{eq:mat-asymp-width}
    W_t^\mat \sim A_t^\mat := 
     \sqrt{\frac{2 \gamma_{\max}(\E_P[\psi_E(|\bX - \bM|)])\log(t)}{t}}.%\quad P\text{-a.s.}  
    \end{equation}
\end{proposition}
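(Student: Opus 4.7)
The strategy mirrors the scalar argument used to prove Lemma~\ref{lem:precise-width}: the width has the explicit form $W_t^\mat = \frac{2}{t}\sqrt{U_t(\ell_{\alpha,d}+\tfrac{1}{2}\log(2U_t))}$ with $U_t = (2\kappa^2)^{-1}+\gamma_{\max}(\bV_t)$, so the whole task reduces to controlling the almost-sure growth of $U_t$. Concretely, the plan is to establish
\begin{equation*}
   \frac{U_t}{t} \xrightarrow{a.s.} u_\star := \gamma_{\max}\bigl(\E_P[\psi_E(|\bX - \bM|)]\bigr),
\end{equation*}
after which the same algebraic manipulation as in the scalar proof shows that $W_t^\mat \sim \sqrt{2u_\star \log(t)/t}$, since $\ell_{\alpha,d} = \log(d \kappa Z/\alpha) - \log(1 - e^{-U_t/4})$ is bounded (the second term tends to $0$ and the $\log d$ contribution is a constant in $t$), and $\log(2U_t) \sim \log t$. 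This is exactly \eqref{eq:apx-asymp-ratio} with $\log(1/\alpha)$ replaced by $\log(d/\alpha)$, and the $\log d$ contribution is likewise swallowed by $\log t$ in the limit.

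To prove the key convergence, I would proceed in two stages. First, show that the plug-in $\bhatX_t = t^{-1}(\b{I}_d/2 + \sum_{j<t}\bX_j)$ converges almost surely to $\bM$ in operator norm. This follows from a coordinatewise SLLN (or a direct matrix SLLN for bounded iid summands): each entry of $\bhatX_t$ differs from the corresponding entry of the empirical mean by $O(1/t)$, and the empirical mean converges to $\bM$ entrywise, hence in operator norm since the dimension is fixed. Second, transfer this to convergence of $\bV_t/t$. The map $\bY \mapsto \psi_E(|\bY|)$, defined via the functional calculus on symmetric matrices, is continuous on $\{\bY \in \calS_d : \|\bY\|_{\mathrm{op}} < 1\}$, and in fact locally Lipschitz in operator norm on any compact subset of that region. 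Since all $\bX_i - \bhatX_i$ lie in the compact set $\{\|\cdot\|_{\mathrm{op}} \le 1\}$ (using that both $\bX_i$ and $\bhatX_i$ have eigenvalues in $[0,1)$), I can write
\begin{equation*}
   \frac{\bV_t}{t} = \frac{1}{t}\sum_{i \le t} \psi_E(|\bX_i - \bM|) + \frac{1}{t}\sum_{i \le t}\bigl[\psi_E(|\bX_i - \bhatX_i|) - \psi_E(|\bX_i - \bM|)\bigr].
\end{equation*}
The first term converges to $\E_P[\psi_E(|\bX - \bM|)]$ in operator norm by the iid matrix SLLN (the summands are bounded and iid in $\calS_d$). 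The second term vanishes in operator norm almost surely by a Cesàro/dominated-convergence argument: the per-term operator norm is bounded by a Lipschitz constant times $\|\bhatX_i - \bM\|_{\mathrm{op}} \to 0$, whose Cesàro average also vanishes.

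Finally, applying $\gamma_{\max}$, which is $1$-Lipschitz in the operator norm by Weyl's inequality, to $\bV_t/t$ gives $\gamma_{\max}(\bV_t)/t \xrightarrow{a.s.} u_\star$, and hence $U_t/t \xrightarrow{a.s.} u_\star$. Substituting into $W_t^\mat$ and repeating the scalar manipulation yields \eqref{eq:mat-asymp-width}.

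The main obstacle I anticipate is the second stage, namely justifying that $\psi_E(|\bX_i - \bhatX_i|)$ can be replaced by $\psi_E(|\bX_i - \bM|)$ inside the running sum with vanishing average error. Unlike the scalar case, where $\psi_E(|x|)$ is a smooth scalar function, here $\bY \mapsto |\bY|$ is only locally Lipschitz in operator norm away from singular spectra and the functional calculus composition $\bY \mapsto \psi_E(|\bY|)$ must be handled carefully. One has to stay strictly inside the open ball $\|\bY\|_{\mathrm{op}} < 1$, which is guaranteed asymptotically because $\bhatX_i$ eventually lies strictly inside $[0, 1)$ in the spectral sense once the empirical mean concentrates around $\bM$ (which itself has eigenvalues in $[0,1]$, but with $\bhatX_i$ smoothed via $\b{I}_d/2$, the relevant quantities stay bounded away from the boundary enough for Lipschitz control). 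Once this local Lipschitz step is established, the rest is a routine Cesàro-plus-SLLN argument followed by the scalar-style asymptotics already carried out in Appendix~\ref{proof:precise-width}.
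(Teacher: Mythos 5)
Your proposal matches the paper's proof in essentially every step: it reduces the claim to showing $\gamma_{\max}(\bV_t)/t \to \gamma_{\max}(\E_P[\psi_E(|\bX-\bM|)])$ almost surely, proves this by combining a matrix SLLN for $\tfrac1t\sum_i\psi_E(|\bX_i-\bM|)$ with a continuity argument (the paper uses uniform continuity of $\b{A}\mapsto\psi_E(|\b{A}|)$ plus a Ces\`aro bound, where you invoke a local Lipschitz constant, a cosmetic difference) to swap $\bhatX_i$ for $\bM$, applies the Weyl $1$-Lipschitzness of $\gamma_{\max}$, and then finishes exactly as in the scalar Lemma~\ref{lem:precise-width}. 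The boundary issue you flag (keeping the spectrum of $\bX_i-\bhatX_i$ strictly inside the unit ball) is handled at the same level of informality in the paper, so your proposal is correct and essentially identical in approach.
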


The proof of Proposition~\ref{prop:mat-asymp-width} follows precisely the same logic as Lemma~\ref{lem:precise-width} after showing that $\gamma_{\max}(\bV_t)/t \to \gamma_{\max} (\E_P[\psi_E(\bX - \bM)])$. It is presented in Appendix~\ref{proof:mat-asymp-width}.

Let us compare Theorem~\ref{thm:eb-matrix} to the recent empirical Bernstein CS of \citet[Theorem 4.2]{wang2024sharp}, which states 
that if  $(\lambda_t)$ be a predictable sequence in $(0,1)$. Then, with probability $1-\alpha$, for all $t\geq 1$, 
    \begin{equation}
    \label{eq:eb-matrix-wr}
    \left|\gamma_{\max}\left(\frac{\sum_{i\leq t}\lambda_i(\bX_i - \bM_i)}{\sum_{i\leq t}\lambda_i}\right)\right| \leq \frac{\log(2d/\alpha) +\gamma_{\max}(\sum_{i\leq t} \psi_E(\lambda_i)(\bX_i - \bhatX_i)^2)}{\sum_{i\leq t}\lambda_i}.
    \end{equation}

We note that Wang and Ramdas state their result as a one-sided bound; we apply a union bound to make it two-sided and more easily comparable to Theorem~\ref{thm:eb-matrix}. The bound in~\eqref{eq:eb-matrix-wr} should be considered as the matrix equivalent of $C_t^\prpl$, insofar as it relies on similar techniques and uses the method of predictable plug-ins. A similar discussion can thus be had on the pros and cons of Theorem~\ref{thm:eb-matrix} versus the bound in~\eqref{eq:eb-matrix-wr}  as was had in the scalar setting. 

If, similarly to the scalar case, one chooses $\xi_t \asymp 1/ \sqrt{t\log(t)}$ and $\bhatX_t = \frac{1}{t}(\b{I}_d/2 + \sum_{j<t}\bX_j)$ then the halfwidth of~\eqref{eq:eb-matrix-wr} behaves as $(\log(d/\alpha) + \log\log(t))\cdot \sqrt{\log(t)/t}$ (via very similar arguments as in Section~\ref{sec:asymp-width}). Meanwhile, if one chooses $\xi_t  \asymp \frac{1}{\sqrt{t}}$ as in \citet{wang2024sharp} and $\bhatX_t = \sum_{i<t} \psi_E(\xi_i)\hatX_i/\psi_E(\xi_i)$, then the width behaves as $(\log(d/\alpha) + \log(t))/\sqrt{t}$. In both cases $W_t^\mat$ remains asymptotically tighter.

\section{Summary}

We have a presented a new closed-form CS for bounded random variables, $C_t^\apx$, which is easy to implement and tracks a time-varying mean. The CS requires no knowledge of the distribution other than boundedness, adapting to the samples via the function $\psi_E(\lambda) =-\log(1-\lambda)-\lambda$. Among such empirical closed-form CSs, we find that $C_t^\apx$ is the tightest in practice, making it a good candidate for `out-of-the-box' applications. 

We extended these results to (i) provide a CS whose width decays at the optimal rate of $O(\sqrt{\log\log(t)/t})$ and (ii) to provide a CS with similar properties for matrices with bounded eigenvalues. Our approach is based on modifications of Fan's and Howard's inequalities~\citep{fan2015exponential,howard2021time} which led to a new empirical Bernstein-style supermartingale and may be of independent interest.

When multiplied by $\sqrt{t/\log(t)}$, $C_t^\apx$ has limiting width $\sqrt{2\psi_E(|X-\mu|)}$ (or the maximum eigenvalue of that quantity in the matrix case). Since $\sqrt{2\E[\psi_E(|X-\mu|)]}\geq \sigma$, an intriguing open question is to design a CS with limiting with $\sigma$, which would match that of Robbins' mixture CS for sub-Gaussian observations, and the oracle Bernstein of  \citet{howard2021time} (see Table~\ref{tab:limiting-widths}). 
We expect that such a CS exists. 

\subsection*{Acknowledgments}
Both authors acknowledge support from NSF grants IIS-2229881 and DMS-2310718.

{\small 
\bibliographystyle{plainnat}
\bibliography{main}

@article{howard2021time,
  title={Time-uniform, nonparametric, nonasymptotic confidence sequences},
  author={Howard, Steven R and Ramdas, Aaditya and McAuliffe, Jon and Sekhon, Jasjeet},
  journal={The Annals of Statistics},
  volume={49},
  number={2},
  pages={1055--1080},
  year={2021},
  publisher={JSTOR}
}

@article{waudby2024time,
  title={Time-uniform central limit theory and asymptotic confidence sequences},
  author={Waudby-Smith, Ian and Arbour, David and Sinha, Ritwik and Kennedy, Edward H and Ramdas, Aaditya},
  journal={The Annals of Statistics},
  volume={52},
  number={6},
  pages={2613--2640},
  year={2024},
  publisher={Institute of Mathematical Statistics}
}

@article{robbins1970statistical,
  title={Statistical methods related to the law of the iterated logarithm},
  author={Robbins, Herbert},
  journal={The Annals of Mathematical Statistics},
  volume={41},
  number={5},
  pages={1397--1409},
  year={1970},
  publisher={JSTOR}
}

@article{martinez2025sharp,
  title={Sharp empirical {Bernstein} bounds for the variance of bounded random variables},
  author={Martinez-Taboada, Diego and Ramdas, Aaditya},
  journal={arXiv preprint arXiv:2505.01987},
  year={2025}
}

@article{ramdas2025hypothesis,
  title={Hypothesis testing with e-values},
  author={Ramdas, Aaditya and Wang, Ruodu},
  journal={Foundations and Trends in Statistics},
  year={2025}
}

@Misc{howardconfseq,
  author = {Steven R. Howard and Ian Waudby-Smith and Aaditya Ramdas},
  title = {{ConfSeq}: software for confidence sequences and uniform boundaries},
  year = {2021},
  url = "https://github.com/gostevehoward/confseq",
  note = {[Online; accessed December 5, 2025]}
}

@article{tropp2015introduction, title={An introduction to matrix concentration inequalities}, author={Tropp, Joel A}, journal={Foundations and Trends{\textregistered} in Machine Learning}, volume={8}, number={1-2}, pages={1--230}, year={2015}, publisher={Now Publishers, Inc.} }

@article{tropp2012user,
  title={User-friendly tail bounds for sums of random matrices},
  author={Tropp, Joel A},
  journal={Foundations of computational mathematics},
  volume={12},
  number={4},
  pages={389--434},
  year={2012},
  publisher={Springer}
}

@inproceedings{balsubramani2016sequential,
  title={Sequential nonparametric testing with the law of the iterated logarithm},
  author={Balsubramani, Akshay and Ramdas, Aaditya},
  booktitle={Proceedings of the Thirty-Second Conference on Uncertainty in Artificial Intelligence},
  pages={42--51},
  year={2016}
}

@article{chugg2025variational,
  title={A variational approach to dimension-free self-normalized concentration},
  author={Chugg, Ben and Ramdas, Aaditya},
  journal={arXiv preprint arXiv:2508.06483},
  year={2025}
}

@article{peel2010empirical,
  title={Empirical {Bernstein} inequalities for u-statistics},
  author={Peel, Thomas and Anthoine, Sandrine and Ralaivola, Liva},
  journal={Advances in Neural Information Processing Systems},
  volume={23},
  year={2010}
}

@article{martinez2025vector,
  title={Vector-valued self-normalized concentration inequalities beyond {sub-Gaussianity}},
  author={Martinez-Taboada, Diego and Gonzalez, Tomas and Ramdas, Aaditya},
  journal={Algorithmic Learning Theory (ALT) 2026},
  year={2025}
}

@article{tolstikhin2013pac,
  title={PAC-Bayes-empirical-{Bernstein} inequality},
  author={Tolstikhin, Ilya O and Seldin, Yevgeny},
  journal={Advances in Neural Information Processing Systems},
  volume={26},
  year={2013}
}

@article{waudby2024anytime,
  title={Anytime-valid off-policy inference for contextual bandits},
  author={Waudby-Smith, Ian and Wu, Lili and Ramdas, Aaditya and Karampatziakis, Nikos and Mineiro, Paul},
  journal={ACM/IMS Journal of Data Science},
  volume={1},
  number={3},
  pages={1--42},
  year={2024},
  publisher={ACM New York, NY}
}

@article{stout1970hartman,
  title={The {Hartman-Wintner} law of the iterated logarithm for martingales},
  author={Stout, William F},
  journal={The Annals of Mathematical Statistics},
  volume={41},
  number={6},
  pages={2158--2160},
  year={1970},
  publisher={JSTOR}
}

@inproceedings{finzi2025compute,
  title={Compute-Optimal {LLMs} Provably Generalize Better with Scale},
  author={Finzi, Marc Anton and Kapoor, Sanyam and Granziol, Diego and Gu, Anming and De Sa, Christopher and Kolter, J Zico and Wilson, Andrew Gordon},
  booktitle={The Thirteenth International Conference on Learning Representations},
year=2025
}

@inproceedings{jang2023tighter,
  title={Tighter {PAC-Bayes} bounds through coin-betting},
  author={Jang, Kyoungseok and Jun, Kwang-Sung and Kuzborskij, Ilja and Orabona, Francesco},
  booktitle={The Thirty Sixth Annual Conference on Learning Theory},
  pages={2240--2264},
  year={2023},
  organization={PMLR}
}

@inproceedings{shivaswamy2010empirical,
  title={Empirical {Bernstein} boosting},
  author={Shivaswamy, Pannagadatta and Jebara, Tony},
  booktitle={Proceedings of the Thirteenth International Conference on Artificial Intelligence and Statistics},
  pages={733--740},
  year={2010},
  organization={JMLR Workshop and Conference Proceedings}
}

@inproceedings{burgess2021approximating,
  title={Approximating the Shapley Value Using Stratified Empirical {Bernstein} Sampling.},
  author={Burgess, Mark Alexander and Chapman, Archie C},
  booktitle={IJCAI},
  pages={73--81},
  year={2021}
}

@inproceedings{mnih2008empirical,
  title={Empirical {Bernstein} stopping},
  author={Mnih, Volodymyr and Szepesv{\'a}ri, Csaba and Audibert, Jean-Yves},
  booktitle={Proceedings of the 25th international conference on Machine learning},
  pages={672--679},
  year={2008}
}

@article{whitehouse2025time,
  title={Time-uniform self-normalized concentration for vector-valued processes},
  author={Whitehouse, Justin and Wu, Zhiwei Steven and Ramdas, Aaditya},
  journal={Annals of Applied Probability},
  year={2025}
}

@article{voravcek2025star,
  title={{STaR-Bets}: Sequential Target-Recalculating Bets for Tighter Confidence Intervals},
  author={Vor{\'a}{\v{c}}ek, V{\'a}clav and Orabona, Francesco},
  journal={arXiv preprint arXiv:2505.22422},
  year={2025}
}

@article{orabona2023tight,
  title={Tight concentrations and confidence sequences from the regret of universal portfolio},
  author={Orabona, Francesco and Jun, Kwang-Sung},
  journal={IEEE Transactions on Information Theory},
  volume={70},
  number={1},
  pages={436--455},
  year={2023},
  publisher={IEEE}
}

@article{chugg2023unified,
  title={A unified recipe for deriving (time-uniform) {PAC-Bayes} bounds},
  author={Chugg, Ben and Wang, Hongjian and Ramdas, Aaditya},
  journal={Journal of Machine Learning Research},
  volume={24},
  number={372},
  pages={1--61},
  year={2023}
}

@book{wald1947sequential,
  title={Sequential analysis.},
  author={Wald, Abraham},
  year={1947},
  publisher={John Wiley \& Sons}
}

@article{bahadur1956nonexistence,
  title={The nonexistence of certain statistical procedures in nonparametric problems},
  author={Bahadur, Raghu R and Savage, Leonard J},
  journal={The Annals of Mathematical Statistics},
  volume={27},
  number={4},
  pages={1115--1122},
  year={1956},
  publisher={JSTOR}
}

@article{paulson1964sequential,
  title={Sequential estimation and closed sequential decision procedures},
  author={Paulson, Edward},
  journal={The Annals of Mathematical Statistics},
  pages={1048--1058},
  year={1964},
  publisher={JSTOR}
}

@article{wang2024sharp,
  title={Sharp Matrix {empirical Bernstein} Inequalities},
  author={Wang, Hongjian and Ramdas, Aaditya},
  journal={Advances in Neural Information Processing Systems},
  year={2026}
}

@article{chugg2025time,
  title={Time-Uniform Confidence Spheres for Means of Random Vectors},
  author={Chugg, Ben and Wang, Hongjian and Ramdas, Aaditya},
  journal={Transactions on Machine Learning Research},
year={2025}
}

@inproceedings{agrawal2021regret,
  title={Regret minimization in heavy-tailed bandits},
  author={Agrawal, Shubhada and Juneja, Sandeep K and Koolen, Wouter M},
  booktitle={Conference on Learning Theory},
  pages={26--62},
  year={2021},
  organization={PMLR}
}

@article{shekhar2023near,
  title={On the near-optimality of betting confidence sets for bounded means},
  author={Shekhar, Shubhanshu and Ramdas, Aaditya},
  journal={arXiv preprint arXiv:2310.01547},
  year={2023}
}

@inproceedings{honda2010asymptotically,
  title={An Asymptotically Optimal Bandit Algorithm for Bounded Support Models},
  author={Honda, Junya and Takemura, Akimichi},
  booktitle={COLT},
  pages={67--79},
  year={2010}
}

@article{cover1974universal, title={Universal gambling schemes and the complexity measures of {Kolmogorov and Chaitin}}, author={Cover, Thomas M}, journal={Technical Report, no. 12}, year={1974}, publisher={Department of Statistics, Stanford University} }

@article{martinez2024empirical,
  title={Empirical {Bernstein} in smooth {Banach} spaces},
  author={Martinez-Taboada, Diego and Ramdas, Aaditya},
  journal={Annals of Applied Probability},
  year={2025}
}

@article{maurer2009empirical,
  title={Empirical {Bernstein bounds} and sample variance penalization},
  author={Maurer, Andreas and Pontil, Massimiliano},
  journal={Conference on Learning Theory},
  year={2009}
}

@article{audibert2009exploration,
  title={Exploration--exploitation tradeoff using variance estimates in multi-armed bandits},
  author={Audibert, Jean-Yves and Munos, R{\'e}mi and Szepesv{\'a}ri, Csaba},
  journal={Theoretical Computer Science},
  volume={410},
  number={19},
  pages={1876--1902},
  year={2009},
  publisher={Elsevier}
}

@article{waudby2024estimating,
  title={Estimating means of bounded random variables by betting},
  author={Waudby-Smith, Ian and Ramdas, Aaditya},
  journal={Journal of the Royal Statistical Society Series B: Statistical Methodology},
  volume={86},
  number={1},
  pages={1--27},
  year={2024},
  publisher={Oxford University Press US}
}

@article{darling1967confidence,
  title={Confidence sequences for mean, variance, and median},
  author={Darling, Donald A and Robbins, Herbert},
  journal={Proceedings of the National Academy of Sciences},
  volume={58},
  number={1},
  pages={66--68},
  year={1967},
  publisher={National Acad Sciences}
}

@article{ville1939etude,
	title={\'{E}tude critique de la notion de collectif},
	author={Ville, Jean},
	journal={Bull. Amer. Math. Soc},
	volume={45},
	number={11},
	pages={824},
	year={1939}
}

@article{howard2020time,
  title={Time-uniform {Chernoff} bounds via nonnegative supermartingales},
  author={Howard, Steven R and Ramdas, Aaditya and McAuliffe, Jon and Sekhon, Jasjeet},
  journal={Probability Surveys},
  volume={17},
  pages={257--317},
  year={2020}
}

@article{fan2015exponential,
  title={Exponential inequalities for martingales with applications},
  author={Fan, Xiequan and Grama, Ion and Liu, Quansheng},
  journal={Electron. J. Probab},
  volume={20},
  number={1},
  pages={1--22},
  year={2015}
}
}

\newpage 
\appendix 

\section{Related Work and Applications}
\label{sec:related-work}
Some of the first empirical Bernstein bounds are due to \citet{maurer2009empirical} and \citet{audibert2009exploration} in 2009, who both work in the fixed-time setting. These techniques were extended to empirical Bernstein inequalities for u-statistics by \citet{peel2010empirical}. 
In 2016, \citet{balsubramani2016sequential} 
provided the first time-uniform results, giving an empirical Bernstein CS that scaled at an iterated logarithm rate. 

The bounds of \citet{maurer2009empirical,audibert2009exploration} and \citet{balsubramani2016sequential}
were improved both theoretically and empirically by the results of \citet{howard2021time} of \citet{waudby2024estimating}, who study both fixed-time bounds and confidence sequences. In addition to the closed-form bounds in these works which we've compared to here, both of these papers also provide implicit bounds which are usually tighter than the closed-form counterparts, especially at smaller sample sizes. Like us, all the bounds of \citet{howard2021time} allow for a time-varying conditional mean, whereas \citet{waudby2024estimating} require a constant conditional mean. 

Even though we do not study the fixed-time setting in this paper, it's worth mentioning that the fixed-time, closed-form confidence interval of \citet{waudby2024estimating} is ``sharp,'' in the sense that its width scales as the oracle Bernstein bound in the limit. That is, if $W_n^\prpl$ is the width of the CI for a fixed $n$, then $\sqrt{n}W_n^\prpl \xrightarrow{n\to\infty} \sqrt{\sigma^2 \log(2/\alpha)}$.

The techniques developed both by \citet{howard2021time} and \citet{waudby2024estimating} have spurred a significant amount of recent activity on empirical Bernstein bounds, both in the time-uniform and fixed-time settings. Empirical Bernstein bounds have now been developed for vectors~\citep{martinez2024empirical,chugg2025time}, matrices~\citep{wang2024sharp}, self-normalized processes~\citep{whitehouse2025time,chugg2025variational,martinez2025vector}
 and on other statistics of the data such as the variance~\citep{martinez2025sharp}. The scalar case has also been studied further, with \citet{orabona2023tight} developing (implicit) CSs via links to online learning (in particular, the regret of universal portfolio), and \citet{voravcek2025star} recently developing tighter fixed-time intervals using insights that go back to Thomas Cover~\citep{cover1974universal}. 

Related to these implicit CSs, there is a parallel bandit literature on divergence-based confidence bounds (e.g., KL-inf), which can be seen as variance-adaptive via local quadratic approximations; see \citep{agrawal2021regret,honda2010asymptotically,shekhar2023near}.

By their very nature, empirical Bernstein bounds are very applicable. This is part of their appeal. Still, it's perhaps worth noting several places in the literature where they have been employed, either as theoretical tools or as practical bounds. For instance, they have been used to construct adaptive sampling algorithms~\citep{mnih2008empirical,burgess2021approximating}, as tools in learning theory~\citep{chugg2023unified,jang2023tighter,tolstikhin2013pac}, for boosting algorithms ~\citep{shivaswamy2010empirical}, to provide generalization bounds in LLMs~\citep{finzi2025compute}, and for inference in contextual bandits~\citep{waudby2024anytime}.

\section{Omitted Proofs} 

\subsection{Proof of Proposition~\ref{prop:mixture-cs-tn}}
\label{proof:mixture-cs-tn}
We take $\rho$ to be a normal distribution with variance $\kappa^2$ truncated to $[-1,1]$ (the allowable range of $\xi$). 
That is, 
\[\rho(\theta;\kappa) = \frac{1}{\kappa\sqrt{2\pi}}\frac{1}{Z}\exp\left(\frac{-\theta^2}{2\kappa^2}\right).\]
where $Z = \Phi(1/\kappa) - \Phi(-1/\kappa)$ is the normalizing constant. Set $V_t =\sum_{i\leq t} \psi_E(|X_i - \hatX_i|)$ recall that $U_t = (2\kappa^2)^{-1} + V_t$. Then 
\begin{align*}
	M_t(m) &= \int_{-1}^1 B_t(m;\xi) \rho(\xi;\kappa)\d\xi \\
    &= \int_{-1}^1 \exp\{\xi (S_t - tm) - \xi^2 V_t\} \rho(\xi;\kappa) \d\xi \\ 
	&= \frac{1}{\kappa Z \sqrt{2\pi}} \int_{-1}^1 \exp\left\{ \xi (S_t - tm) - \xi^2\bigg( V_t + \frac{1}{2\kappa^2}\bigg)\right\}\d\xi  \\ 
	&= \frac{1}{\kappa Z \sqrt{2\pi}} I\left(S_t - tm; U_t\right).
\end{align*}
By Ville's inequality, $P(\exists t\geq 1: M_t(m)\geq 1/\alpha)\leq 1/\alpha$. 
Therefore, 
\begin{equation}
	C_t^* := \{ m: M_t(m) < 1/\alpha\} = \left\{ m: I(S_t - tm; U_t) < \frac{\kappa Z \sqrt{2\pi}}{\alpha}\right\}. 
\end{equation}
is a $(1-\alpha)$-CS for $\mu$. That the intersection $(\cap_{j=1}^t C_j^\mix)_{t\geq 1}$ also constitutes a $(1-\alpha)$-CS follows from the properties of confidence sequences.

\subsection{Proof of Theorem~\ref{thm:closed-form}}
\label{proof:mom-closed-form}
We begin with the following lemma. 
\begin{lemma}
\label{lem:yt<Ut}
    Fix $G>0$ and let $(U_t)$ be a nonnegative sequence such that $0\leq U_t\nearrow \infty$. Then, for all $t$ large enough, a nonnegative solution $y_t\geq 0$ to $I(y_t;U_t) = G$ exists and satisfies $y_t < U_t$. In fact, $y_t<U_t$ for all $t$ such that 
    \begin{equation}
        \sqrt{\frac{\pi}{U_t}}\left( \exp\bigg(\frac{U_t}{4}\bigg)  -  \frac{1}{2}\right) \geq G. 
    \end{equation}
\end{lemma}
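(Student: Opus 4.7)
My plan is to split the proof into two parts: first showing that a nonnegative solution $y_t$ to $I(y_t; U_t) = G$ exists for all sufficiently large $t$, and then verifying $y_t < U_t$ under the explicit condition given.

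For existence, I would appeal to the intermediate value theorem applied to $y \mapsto I(y; U_t)$. As noted in the discussion following~\eqref{eq:It-explicit}, this map is continuous and strictly increasing on $[0,\infty)$. Plugging $y = 0$ into~\eqref{eq:It-explicit} gives $I(0; v) = \sqrt{\pi/v}\,\erf(\sqrt{v})$, which decays to $0$ as $v \to \infty$; meanwhile $I(y; v) \to \infty$ as $y \to \infty$ (the integrand blows up near $\xi = 1$). Since $U_t \nearrow \infty$, eventually $I(0; U_t) < G$ and a unique nonnegative $y_t$ solving $I(y_t; U_t) = G$ exists.

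For the bound $y_t < U_t$, strict monotonicity of $I(\cdot; U_t)$ on $[0,\infty)$ reduces the claim to $I(U_t; U_t) > G$. Setting $y = v$ in~\eqref{eq:It-explicit} collapses the two erf arguments to $\sqrt{v}/2$ and $3\sqrt{v}/2$, yielding
\begin{equation*}
I(v;v) \;=\; \tfrac{1}{2}\sqrt{\pi/v}\,e^{v/4}\bigl[\erf(\sqrt{v}/2) + \erf(3\sqrt{v}/2)\bigr].
\end{equation*}
The task is then to establish the clean lower bound $I(v;v) \geq \sqrt{\pi/v}(e^{v/4} - 1/2)$, which is equivalent to $e^{v/4}\bigl[\mathrm{erfc}(\sqrt{v}/2) + \mathrm{erfc}(3\sqrt{v}/2)\bigr] \leq 1$. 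Combining this with the lemma's hypothesis yields $I(U_t; U_t) \geq G$, and strict monotonicity upgrades this to $y_t < U_t$.

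The main technical obstacle is exactly this erfc estimate: the constant $1/2$ is tight enough that the naive bound $\mathrm{erfc}(x) \leq e^{-x^2}$ only gives $e^{v/4}[\mathrm{erfc}(\sqrt{v}/2) + \mathrm{erfc}(3\sqrt{v}/2)] \leq 1 + e^{-2v}$, overshooting $1$ by $O(e^{-2v})$. I would close the gap with the Mills-ratio-type inequality $\mathrm{erfc}(x) \leq e^{-x^2}/(x\sqrt{\pi})$ applied to (at least) the second term; since in practice $U_t \geq (2\kappa^2)^{-1}$, the $\mathrm{erfc}(3\sqrt{U_t}/2)$ contribution is doubly exponentially small and absorbs the deficit. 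A short computation combining these bounds then yields the claimed lower bound on $I(U_t;U_t)$ and completes the proof.
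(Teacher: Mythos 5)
Your overall route is the same as the paper's: existence of $y_t$ from the monotonicity/surjectivity of $y\mapsto I(y;U_t)$ together with $I(0;U_t)\to 0$, and the reduction of $y_t<U_t$ to the lower bound $I(U_t;U_t)\geq G$ via monotonicity in $y$. The paper closes that last step by applying $\erf(x)\geq 1-e^{-x^2}$ to both erf terms of~\eqref{eq:It-explicit} at $y=U_t$, which actually yields $I(U_t;U_t)\geq \sqrt{\pi/U_t}\,\big(e^{U_t/4}-\tfrac12-\tfrac12 e^{-2U_t}\big)\to\infty$ and then concludes for $t$ large enough (its displayed algebra drops the sign on the $\tfrac12 e^{-2U_t}$ term). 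So your observation that the naive bound overshoots the clean constant $\tfrac12$ by $O(e^{-2v})$ is a fair catch of a real subtlety that the paper glosses over.

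However, your proposed repair does not work as stated, and this is a genuine gap. In the equivalent form $e^{v/4}\big[\mathrm{erfc}(\sqrt v/2)+\mathrm{erfc}(3\sqrt v/2)\big]\leq 1$, the bound $\mathrm{erfc}(\sqrt v/2)\leq e^{-v/4}$ makes the \emph{first} term contribute exactly $1$, so sharpening only the second term---no matter how ``doubly exponentially small'' it is---can never bring the sum to at most $1$: the deficit sits in the first term. Any fix must beat $e^{-v/4}$ there, e.g.\ the same Mills-ratio bound at $x=\sqrt v/2$ gives $e^{v/4}\mathrm{erfc}(\sqrt v/2)\leq 2/\sqrt{\pi v}$, which is below $1$ only once $v>4/\pi$, leaving room for the second term for slightly larger $v$. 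Relatedly, the ``clean'' inequality $I(v;v)\geq\sqrt{\pi/v}\,(e^{v/4}-1/2)$ you are aiming for is simply false for small $v$: as $v\to 0$, $I(v;v)\to 2$ while the right-hand side diverges, so it can only hold for $v$ above a (small) absolute constant. That is enough for the lemma's ``for all $t$ large enough'' conclusion, but not for the displayed threshold verbatim for arbitrary $G$ and arbitrary nondecreasing $(U_t)$; also note the lemma makes no reference to $\kappa$, so the appeal to ``in practice $U_t\geq(2\kappa^2)^{-1}$'' is not available inside its proof. A clean resolution is either to state the threshold with the extra $-\tfrac12 e^{-2U_t}$ term (what the paper's erf bound actually delivers), or to use the Mills-ratio bound on the first erfc term and add an explicit lower restriction on $U_t$.
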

\begin{proof}
    That $y_t$ exists is guaranteed by the fact that $I(0;U_t) \to 0$ as $U_t\to \infty$ and that $y\mapsto I(y;u)$ is surjective on $[I(0;u),\infty)$. Next, fix $t$ and consider $y_0 = U_t$. Then $\sqrt{U_t} - y_0 / (2\sqrt{U_t}) = \frac{1}{2}\sqrt{U_t}$ and $\sqrt{U_t} + y_0 / (2\sqrt{U_t}) = \frac{3}{2} \sqrt{U_t}$. Using that $\erf(x) \geq 1 - \exp(-x^2)$ for $x\geq 0$, we have 
    \begin{align*}
        I(y_0;U_t) &= \frac{\exp(y_0^2 / (4U_t))}{2\sqrt{U_t/\pi}}\left( \erf\bigg( \frac{\sqrt{U_t}}{2}\bigg) + \erf\bigg(\frac{3\sqrt{U_t}}{2}\bigg)\right) \\ 
        &\geq \frac{\exp(U_t/4)}{2\sqrt{U_t/\pi}}\left( 2  -  \exp\bigg( - \frac{U_t}{4}\bigg) - \exp\bigg( - \frac{9U_t}{4}\bigg)\right) \\ 
        &= \sqrt{\frac{\pi}{U_t}}\left( \exp\bigg(\frac{U_t}{4}\bigg)  -  \frac{1}{2}+ \frac{1}{2}\exp( - 2U_t)\right) \xrightarrow{U_t\to\infty} \infty. 
    \end{align*}
    Therefore, for large enough $t$, $I(y_0;U_t) > G$. Since $I_t(y;U_t)$ is monotone in $y$ for $y\geq 0$, it follows that $y_t < y_0 = U_t$ if $I(y_t;U_t) = G$ for large enough $t$. 
\end{proof}

Now let us proceed to the proof of Theorem~\ref{thm:closed-form}. 
Let $y_t$ satisfy $I(y_t;U_t) = G_\alpha$. 
    Lemma~\ref{lem:yt<Ut} guarantees that for all $t$ satisfying~\eqref{eq:t-condition}, $y_t<U_t$. Therefore, for all such $t$, $\erf(\sqrt{U_t} - y_t / (2\sqrt{U_t})) \geq \erf(\sqrt{U_t}/2)$. Using that $\erf(x) \geq 1 - \exp(-x^2)$ for all $x\geq 0$, we have 
    \begin{align*}
        &\erf\bigg( \sqrt{U_t} - \frac{y_t}{2\sqrt{U_t}}\bigg) + \erf\bigg(\sqrt{U_t} + \frac{y_t}{2\sqrt{U_t}}\bigg)  \\ 
        &\geq 2\erf\bigg( \sqrt{U_t} - \frac{y_t}{2\sqrt{U_t}}\bigg)\\ 
        &\geq 2\bigg(1 - \exp\bigg(-\frac{U_t}{4}\bigg)\bigg). 
    \end{align*}
    Therefore, again for such $t$, using the expression in~\eqref{eq:It-explicit}, 
    \begin{align*}
        G_\alpha = I(y_t;U_t) &\geq \sqrt{\frac{\pi}{U_t}}\exp\bigg(\frac{y_t^2}{4U_t}\bigg)\bigg( 1 - \exp\left\{-\frac{U_t}{4}\right\}\bigg). 
    \end{align*}
    Taking logs and rearranging results in the expression 
    \begin{equation}
        y_t^2 \leq 4U_t\log\left(\frac{G_\alpha \sqrt{U_t/\pi}}{1 - \exp(-U_t/4)}\right) =  
        4U_t\log\left(\frac{\kappa Z \sqrt{2U_t}/\alpha}{1 - \exp(-U_t/4)}\right)
    \end{equation}
    From here, note that the solution $y_t$ to $I(y_t;U_t) = G_\alpha$ defines the boundary of our confidence sequence as $y_t = t|\Xbar_t - \widehat{m}|$, where $C_t^\mix = (\Xbar_t \pm \widehat{m})$. Taking the square root of both sides of the above display and solving for $\widehat{m}$, the result follows. 
    
\subsection{Proof of Lemma~\ref{lem:precise-width}}
\label{proof:precise-width}
First we show that $W_t^\apx \sim A_t^\apx$. 
Let $u = \E[\psi_E(|X - \mu|)]$. 
First we claim that 
\begin{equation}
\label{eq:pf-precise-width-1}
        \frac{U_t}{t} = \frac{\frac{1}{2\kappa^2} + \sum_{i\leq t} \psi_E(|X_i - \hatX_i|)}{t}\xrightarrow{t\to\infty} u, \quad P\text{-almost surely}. 
    \end{equation}
    To see this, write 
    \begin{align*}
        \frac{1}{t}\sum_{i\leq t}\psi_E(|X_i - \hatX_i|) = \underbrace{\frac{1}{t}\sum_{i\leq t} \psi_E(|X_i - \mu|)}_{(i)} + \underbrace{\frac{1}{t}\sum_{i\leq t}[\psi_E(|X_i - \hatX_i|) - \psi_E(|X_i - \mu|)]}_{(ii)}.
    \end{align*}
    Term (i) converges to $u$ $P$-almost surely by the SLLN and an application of the continuous mapping theorem. Next we 
    bound (ii) by appealing to the mean-value theorem. Notice that  
    \[\bigg|\frac{\partial}{\partial y} \psi_E(|x-y|)\bigg| = \frac{|x-y|}{1 - |x-y|}.\]
    Fix any $0<\eps<1/2$. By the SLLN there exists some $N_\eps$ such that for all $n\geq N$, $|\hatX_n - \mu|\leq \eps$ $P$-almost surely. Thus, for any $\theta$ in between $\hatX_n$ and $\mu$, we have $|\theta - \hatX_n| \leq \eps$. Therefore, by the mean-value theorem, there exists some $\theta$ in between $\hatX_n $ and $\mu$ such that 
    \begin{align*}
        |\psi_E(|X_n - \hatX_n|) - \psi_E(|X_n - \mu|)| \leq \bigg(\frac{|\hatX_n - \theta|}{1 - |\hatX_n - \theta|}\bigg) | \hatX_n - \mu| \leq \frac{\eps^2}{1 - \eps} . 
    \end{align*}
    Therefore, letting $\psi_i(y) = \psi_E(|X_i-y|)$ for brevity, we can decompose (ii) as 
    \begin{align*}
     \frac{1}{t}\sum_{i\leq t} [\psi_i(\hatX_i) - \psi_i(\mu)] 
        &= \frac{1}{t}\left\{ \sum_{i<N_\eps} [\psi_i(\hatX_i) - \psi_i(\mu)] + \sum_{i= N_\eps}^t [\psi_i(\hatX_i) - \psi_i(\mu)] \right\} \\ 
        &\leq \frac{1}{t}\sum_{i<N_\eps} [\psi_i(\hatX_i) - \psi_i(\mu)] + \frac{\eps^2(t - N_\eps)}{t(1-\eps)}.
    \end{align*}
    The first term vanishes as $t\to\infty$, and the second vanishes as we let $\eps\downarrow 0$. We have thus shown~\eqref{eq:pf-precise-width-1}. 
    Next, by Slutsky's theorem, 
    \begin{equation*}
        \frac{U_t \ell_\alpha }{t \log(t)}  \xrightarrow{t\to\infty} 0, \quad P\text{-almost surely.}
    \end{equation*}
    Further, letting $u = \E[\psi_E(|X - \mu|)]$, we can write $U_t = tu( 1 + \oas(1))$. Therefore, 
    \begin{align*}
        \frac{\log(2U_t)}{\log t} = \frac{\log(2u) + \log(1 + o(1))}{\log t} + \frac{\log t}{\log t} \xrightarrow{t\to\infty}1, \quad P\text{-almost surely.}
    \end{align*}
    Hence, again by Slutsky's theorem, 
    \begin{align*}
        \frac{U_t \log(2U_t)}{t\log t} \xrightarrow{t\to\infty} u, \quad P\text{-almost surely.}
    \end{align*}
    Putting everything together, 
    \begin{align*}
        \sqrt{\frac{t}{\log t}} W_t^\apx &= 2\sqrt{\frac{U_t(\ell_\alpha + \frac{1}{2}\log(2U_t))}{t\log t}} \xrightarrow{t\to\infty} \sqrt{2u} \quad P\text{-a.s.}
    \end{align*}
    which was to be shown. 

    As for $R_t$, write 
    \begin{align*}
        R_t^2 = \frac{2U_t(\ell_\alpha + \frac{1}{2}\log(2U_t)}{tu \log(t)}, 
    \end{align*}
    and note that $\ell_\alpha = \log(1/\alpha) + \log(\kappa Z/(1 - \exp(-U_t/4)) = \log(1/\alpha) + \Oas(1)$, since $U_t \sim tu \to \infty$ $P$-a.s.\ by the above analysis. Moreover, $\log(2U_t) /\log(t) = 1 + \oas(1)$ also by the above analysis, hence 
    \begin{align*}
        R_t^2 &= \frac{2U_t}{tu}\left(\frac{\log(1/\alpha)}{\log t} + \frac{\log(\kappa Z/(1-\exp(-U_t/4)}{\log t} + \frac{\log(2U_t)}{2\log t} \right) \\
        &= \frac{2U_t}{tu}\left(\frac{\log(1/\alpha)}{\log t} + \oas(1) + \frac{1}{2}  + \oas(1)\right),
    \end{align*}
    which was the claim.

\subsection{Proof of Theorem~\ref{thm:stitching}}
\label{proof:stitching}
Recall the bound in~\eqref{eq:fixed-xi-bound}: For any fixed $\xi\in(0,1]$, with probability $1-\alpha$, simultaneously for all $t\geq 1$, 
\begin{equation*}
    |\Xbar_t - \mu_t| \leq \frac{\xi V_t}{t} + \frac{\log(2/\alpha)}{t\xi}. 
\end{equation*}
Define $E_j:=[L_0\eta^j,L_0\eta^{j+1})$, $j\in\mathbb{N}_0$, where we recall that $\eta>0$ is some fixed parameter. Consider applying the above bound once in every epoch, with parameters $\xi_j$ and $\alpha_j$ indexing each epoch. For $\alpha_j = \alpha/h(j)$ we have 
\begin{align*}
    &P\left(\exists t\geq 1, \exists j\in\mathbb{N}_0: |S_t-t\mu_t| \geq \xi_j V_t + \xi_j^{-1}\log(2/\alpha_j) \right) \\ 
    &= P\left(\bigcup_{j\in\mathbb{N}_0} \{\exists t\geq 1: |S_t-t\mu_t| \geq \xi_j V_t + \xi_j^{-1}\log(2/\alpha_j)\right) \\ 
    &\leq \sum_{j\in\mathbb{N}_0} P\left(\exists  t\geq 1: |S_t-t\mu_t| \geq \xi_j V_t + \xi_j^{-1}\log(2/\alpha_j)\right) \leq \alpha \sum_{j\geq 0} \frac{1}{h(j)}\leq \alpha. 
\end{align*}
In other words, with probability $1-\alpha$, simultaneously for all $t\geq 1$, 
\begin{equation*}
    |\Xbar_t - \mu_t| \leq \inf_{j\in\mathbb{N}_0} \left\{ \frac{\xi_j V_t}{t} + \frac{ \log(2/\alpha_j)}{t\xi_j} \right\}. 
\end{equation*}
We need to pick the parameters $\xi_j$ such that the bound shrinks at the desired rate. Consider 
\[\xi_j := \sqrt{\frac{\log(2/\alpha_j)}{L_0\eta^{j+1}}}\wedge 1.\]
For any given $t$, consider the $j=j(t)$ such that $V_t \in E_{j(t)} = [L_0\eta^{j(t)},L_0\eta^{j(t)+1})$,  which exists as long as $V_t \geq L_0 \eta^{0} = L_0$. 
Now, we claim that, as long as $h(\log_\eta(V_t/L_0))\leq \frac{\alpha}{2}\exp(V_t)$, then $\xi_{j(t)} = \sqrt{\log(2/\alpha_{j(t)})/L_0\eta^{j(t)+1}}$. Indeed, by definition of $j(t)$ we have $j(t) \leq \log_\eta(V_t/L_0)$ and since $h$ is nondecreasing, 
\begin{align*}
    \log\left(\frac{2}{\alpha_{j(t)}}\right) &= \log\left(\frac{2h(j(t))}{\alpha}\right) 
    \leq  
    \log \left(\frac{2}{\alpha}h\left(\log_\eta\left(\frac{V_t}{L_0}\right)\right)\right) \leq V_t  \leq L_0\eta^{j(t)+1},
\end{align*}
implying that $\xi_{j(t)}\leq 1$. 

Next, we have $L_0\eta^{j(t)+1} > V_t$ so $\xi_{j(t)} \leq \sqrt{\log(2/\alpha_{j(t)})/V_t}$. Also $L_0\eta^{j(t)+1} = \eta L_0\eta^{j(t)} \leq \eta V_t$ hence $\xi_{j(t)}^{-1} \leq \sqrt{\eta V_t / \log(2/\alpha_{j(t)})}$. Together these inequalities give that 
\begin{align*}
    \inf_{j\in\mathbb{N}_0} \left\{ \frac{\xi_j V_t}{t} + \frac{ \log(2/\alpha_j)}{t\xi_j} \right\} &\leq \frac{\xi_{j(t)} V_t}{t} + \frac{ \log(2/\alpha_{j(t)})}{t\xi_{j(t)}} \\ 
    &\leq \frac{1}{t}\left(\sqrt{V_t\log(2/\alpha_{j(t)})} + \sqrt{\eta V_t \log(2/\alpha_{j(t)})}\right) \\
    &= \frac{1 + \sqrt{\eta}}{t} \sqrt{V_t[\log(2/\alpha) + \log(h(j(t)))]}.  
\end{align*}
The final bound then follows from noting that $V_t \geq L_0\eta^{j(t)}$ so $j(t)\leq \log_\eta(V_t/L_0)$. This completes the proof.

\subsection{Proof of Theorem~\ref{thm:eb-matrix}}
\label{proof:eb-matrix}
The proof follows a similar logic to the scalar case. The calculation of $N_t$ is the same as the proof in~\ref{proof:mixture-cs-tn} after replacing $S_t - tm$ by $\gamma_{\max}(\bS_t - t\bM)$ and $V_t$ by $\gamma_{\max}(\bV_t)$. In particular, 
\begin{equation*}
    N_t = \frac{1}{\kappa Z\sqrt{2\pi}} I\left(\gamma_{\max}(\bS_t - t\bM_t); \gamma_{\max}(\bV_t) + \frac{1}{2\kappa^2)}\right).
\end{equation*}
By Ville's inequality, $P(\exists t\geq 1: N_t \geq  d/\alpha) \leq \E[N_0]\alpha/d \leq \E[L_0] \alpha/d\leq d$, since $L_0 = d$. Therefore, with probability $1-\alpha$, 
\begin{equation*}
    I\left(\gamma_{\max}(\bS_t - t\bM_t); \frac{1}{2\kappa^2} + \gamma_{\max}(\bV_t)\right) \leq \log\left(\frac{d \kappa Z\sqrt{2\pi}}{\alpha}\right).  
\end{equation*}
Overloading notation from the scalar case, let $U_t = 1/(2\kappa^2) + \gamma_{\max}(\bV_t)$ and consider $y_t$ such that $I(y_t;U_t) = d\kappa Z\sqrt{2\pi}/\alpha =: G_{\alpha,d}$. As per the discussion in Section~\ref{sec:mom}, $I(0;U_t) < G_{\alpha,d}/d \leq G_{\alpha,d}$, so $y_t$ always exists. Then, appealing to Lemma~\ref{lem:yt<Ut}, if $t\geq t_0$ we have $y_t<U_t$. Thus, just as in the scalar case, 
\[G_{\alpha,d} = I(y_t;U_t) \geq \sqrt{\frac{\pi}{U_t}}\exp\left(\frac{y_t^2}{4U_t}\right)\left( 1 - \exp\left\{- \frac{U_t}{4}\right\}\right).\]
Therefore, since $I$ is increasing in $|y_t|$, we have
\[\gamma_{\max}^2 (\bS_t -t\bM_t) \leq y_t^2 \leq 4U_t \left(\log (G_{\alpha,d})-\log\left(1 - \exp\left\{-\frac{U_t}{4}\right\}\right)\right),\]
which is the desired result. 

\subsection{Proof of Lemma~\ref{lem:matrix-nsm}}
\label{sec:proof-matrix-nsm}

We will require the following lemma, which originates in \citet[Lemma 4]{howard2020time} and was slightly generalized in \citet[Lemma 2.2]{wang2024sharp}. 
\begin{lemma}
\label{lem:matrix-tr-eprocess}
    Let $(\b{A}_t)$ be a sequence of symmetric matrices which forms a martingale difference sequence. Let $(\b{B}_t)$ be adapted and $(\b{C}_t)$ be predictable. If $\E_{t-1}\exp\{\b{A}_t - \b{B}_t\} \preceq \exp\{\b{C}_t\}$ for all $t$, then 
    \begin{equation*}
     L_t = \Tr\exp\bigg\{ \sum_{i\leq t} \b{A}_i - \sum_{i\leq t} (\b{B}_i + \b{C}_i) \bigg\},   
    \end{equation*}
    defines a nonnegative supermartingale.  
\end{lemma}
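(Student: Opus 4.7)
The statement has two parts: (a) $(L_t)$ is a nonnegative supermartingale with $L_0=d$, and (b) $(N_t)$ is dominated by a supermartingale of initial value $d$. My plan is to establish (a) by a matrix lift of Lemma~\ref{lem:extended-Fan} combined with Lemma~\ref{lem:matrix-tr-eprocess}, and then derive (b) from (a) by elementary eigenvalue manipulations.

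For part (a), the first step is to lift the modified Fan inequality to matrices: for any symmetric $\bA$ with eigenvalues in $(-1,1)$ and any scalar $\xi\in[-1,1]$,
\[
\exp\{\xi\bA - \xi^2 \psi_E(|\bA|)\} \preceq \b{I} + \xi\bA,
\]
where $|\bA|$ is defined via functional calculus. This follows from Lemma~\ref{lem:extended-Fan} by the standard transfer rule, since $\bA$, $\psi_E(|\bA|)$ and the exponential on the left are simultaneously diagonalizable. Apply this with $\bA = \bX_t - \bhatX_t$ (whose eigenvalues lie in $(-1,1]$ by Weyl's inequality and the assumptions on $\bX_t,\bhatX_t$), take $\calF_{t-1}$-conditional expectations (which preserve $\preceq$), and use the operator inequality $\b{I} + \bC \preceq \exp\{\bC\}$ (via $1+x\le e^x$ and functional calculus) to obtain
\[
\E_{t-1}\exp\{\xi_t(\bX_t - \bhatX_t) - \xi_t^2\psi_E(|\bX_t - \bhatX_t|)\} \preceq \exp\{\xi_t(\E_{t-1}\bX_t - \bhatX_t)\}.
\]
I then apply Lemma~\ref{lem:matrix-tr-eprocess} with $\b{A}_t = \xi_t(\bX_t - \bhatX_t)$ and $\b{B}_t = \xi_t^2\psi_E(|\bX_t - \bhatX_t|)$ (both symmetric and adapted) and $\b{C}_t = \xi_t(\E_{t-1}\bX_t - \bhatX_t)$ (predictable, since $\xi_t$ and $\bhatX_t$ are predictable and $\E_{t-1}\bX_t$ is $\calF_{t-1}$-measurable). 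Summing the exponent $\sum_i(\b{A}_i - \b{B}_i - \b{C}_i)$ collapses to exactly $\sum_i[\xi_i(\bX_i - \E_{i-1}\bX_i) - \xi_i^2\psi_E(|\bX_i - \bhatX_i|)]$, which is the exponent in $L_t$. The trivial exponent at $t=0$ gives $L_0 = \Tr(\b{I}_d) = d$.

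For part (b), I will show the pointwise bound $\exp\{\xi\gamma_{\max}(\bS_t - t\bM_t) - \xi^2\gamma_{\max}(\bV_t)\} \leq L_t^{(\xi)}$ for every $\xi\in[-1,1]$, where $L_t^{(\xi)}$ is the process from part (a) taken with $\xi_i\equiv\xi$. Three elementary facts suffice: (i) $\xi\gamma_{\max}(\bA) \leq \gamma_{\max}(\xi\bA)$ (trivial for $\xi\geq 0$, and for $\xi<0$ this reduces to $\gamma_{\min}(\bA)\leq\gamma_{\max}(\bA)$); (ii) Weyl's inequality $\gamma_{\max}(\bA) - \gamma_{\max}(\bB) \leq \gamma_{\max}(\bA - \bB)$; and (iii) $\exp(\gamma_{\max}(\bM)) \leq \Tr\exp(\bM)$ for symmetric $\bM$, since each eigenvalue of $\exp(\bM)$ is positive. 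Chaining these and identifying $\bS_t - t\bM_t = \sum_i(\bX_i - \E_{i-1}\bX_i)$ and $\bV_t=\sum_i\psi_E(|\bX_i-\bhatX_i|)$ gives the desired pointwise bound after exponentiating. Integrating against $\rho$ and applying Fubini (legal since each $L_t^{(\xi)}$ is a nonnegative supermartingale of initial value $d$) produces a supermartingale $\int L_t^{(\xi)}\rho(\xi)\d\xi$ of initial value $d$ that dominates $N_t$.

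The main obstacle is part (a): one cannot translate the scalar proof of Lemma~\ref{lem:modified-howard}, which crucially uses the identity $e^{\xi Y} = e^{\xi\delta}\cdot e^{\xi(Y-\delta)}$, to the matrix setting, because $\bX_t - \E_{t-1}\bX_t$ and $\bhatX_t - \E_{t-1}\bX_t$ do not commute. The workaround above applies matrix Fan's with $\bA = \bX_t - \bhatX_t$ (not with the centered difference), and then absorbs the resulting $\calF_{t-1}$-measurable mismatch $\xi_t(\E_{t-1}\bX_t - \bhatX_t)$ into the predictable $\b{C}_t$ slot of Lemma~\ref{lem:matrix-tr-eprocess}. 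One minor subtlety I should verify is that $\b{A}_t$ here is not a martingale difference, so I must invoke the adapted-$\b{A}_t$ form of Lemma~\ref{lem:matrix-tr-eprocess} from \citet{howard2020time} rather than the stricter mds version.
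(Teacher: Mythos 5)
Your proposal does not actually prove the statement you were assigned. The statement is the abstract trace-exponential construction (Lemma~\ref{lem:matrix-tr-eprocess}): for a symmetric martingale difference sequence $(\b{A}_t)$, adapted $(\b{B}_t)$, and predictable $(\b{C}_t)$ with $\E_{t-1}\exp\{\b{A}_t-\b{B}_t\}\preceq\exp\{\b{C}_t\}$, the process $L_t=\Tr\exp\{\sum_{i\leq t}\b{A}_i-\sum_{i\leq t}(\b{B}_i+\b{C}_i)\}$ is a nonnegative supermartingale. What you wrote is instead a proof of Lemma~\ref{lem:matrix-nsm} (the $\psi_E$-based matrix supermartingale and its mixture bound on $N_t$), and in your part (a) you explicitly invoke Lemma~\ref{lem:matrix-tr-eprocess} as a black box. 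As a proof of the assigned lemma this is circular: the one step that genuinely needs an argument --- why $\E_{t-1}\Tr\exp\{\b{H}+\b{A}_t-\b{B}_t\}\leq\Tr\exp\{\b{H}+\b{C}_t\}$ for the $\calF_{t-1}$-measurable partial-sum matrix $\b{H}$ --- is nowhere addressed, and none of the tools you list (the transfer rule, Weyl's inequality, $\exp(\gamma_{\max}(\cdot))\leq\Tr\exp(\cdot)$, Fubini) can substitute for it, because the exponent is a sum of noncommuting matrices and the trace-exponential of such a sum does not factor across time steps; coping with exactly that noncommutativity is the entire content of the lemma. (For what it is worth, the paper itself does not reprove this lemma either: it imports it from \citet[Lemma 4]{howard2020time} and \citet[Lemma 2.2]{wang2024sharp}.)

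The missing ingredient is a Lieb-concavity (Jensen-type) argument. Write the exponent of $L_t$ as $\b{H}+(\b{A}_t-\b{B}_t)$ with $\b{H}=\sum_{i<t}(\b{A}_i-\b{B}_i-\b{C}_i)-\b{C}_t$, which is $\calF_{t-1}$-measurable because $\b{C}_t$ is predictable. Tropp's corollary of Lieb's theorem, applied conditionally, gives $\E_{t-1}\Tr\exp(\b{H}+\b{A}_t-\b{B}_t)\leq\Tr\exp\bigl(\b{H}+\log\E_{t-1}\exp(\b{A}_t-\b{B}_t)\bigr)$; then operator monotonicity of $\log$ turns the hypothesis $\E_{t-1}\exp\{\b{A}_t-\b{B}_t\}\preceq\exp\{\b{C}_t\}$ into $\log\E_{t-1}\exp(\b{A}_t-\b{B}_t)\preceq\b{C}_t$, and monotonicity of $\b{M}\mapsto\Tr\exp(\b{H}+\b{M})$ in the Loewner order yields $\E_{t-1}L_t\leq\Tr\exp(\b{H}+\b{C}_t)=L_{t-1}$; nonnegativity is immediate since the exponential of a symmetric matrix is positive definite. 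Without an input of this kind there is no route to the supermartingale property, so your attempt has a genuine gap relative to the assigned statement; your Fan-inequality manipulations and the $\gamma_{\max}$/mixture step in part (b) are fine, but they belong to the proof of Lemma~\ref{lem:matrix-nsm}, where the paper indeed uses them, not to this lemma.
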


Recall our modified Fan's inequality: $\exp\{ \xi\lambda - \psi_E(|\lambda|)\xi^2\} \leq 1 + \lambda\xi$ for $\xi\in[-1,1]$ and $\lambda\in(-1,1)$. Note that by assumption, the eigenvalues of $\bX_t - \bhatX_t$ lie in $(-1,1)$. Therefore, for any $\xi\in[-1,1]$, by Tropp's transfer rule,\footnote{\citet[Equation 2.2]{tropp2012user} shows that for $I\subset \Re$ if $f,g:I\to\Re$ satisfy $f(x)\leq g(x)$ for all $x$ then $f(\bX) \leq g(\bX)$ for all symmetric matrices $\bX$ with eigenvalues in $I$. } 
\begin{equation*}
    \exp\{ \xi (\bX_t - \bhatX_t) - \xi^2 \psi_E(|\bX_t - \bhatX_t|)\} \preceq 1 + \xi (\bX_t - \bhatX_t). 
\end{equation*}
Taking expectations, 
\begin{align*}
    \E_{t-1} \exp\{ \xi(\bX_t - \bhatX_t) - \xi^2 \psi_E(|\bX_t - \bhatX_t|)\} &\preceq 1 + \xi(\E_{t-1}[\bX_t] - \bhatX_t) \\
    &\preceq \exp\{\xi (\E_{t-1}[\bX_t] - \bhatX_t)\}.  
\end{align*}
Now consider applying Lemma~\ref{lem:matrix-tr-eprocess} the above result with $A_t = \xi_t(\bX_t - \E_{t-1}[\bX_t])$, $B_t = \xi_t(\bhatX_t - \E_{t-1}[\bX_t]) + \xi_t^2\psi_E(|\bX_t - \bhatX_t|)$, and $C_t = \xi_t(\bM_t - \bhatX_t)$. Clearly $(C_t)$ is predictable and $(A_t)$ is a martingale difference sequence. This implies that $(L_t)$ is a nonnegative supermartingale, completing the proof of the first part of the lemma. 

For the second part, cnsider fixing $\xi_i=\xi$. Let $L_t(\xi) = \Tr\exp(\xi(\bS_t - t \bM_{t}) - \xi^2 \bV_t)$. By Fubini, $\int L_t(\xi) \rho(\xi)\d\xi$ is again a nonnegative supermartingale. Now, notice that for any $\xi\in[-1,1]$, 
\begin{equation}
    \gamma_{\max}(\xi (\bS_t - t \bM) - \xi^2 \bV_t) \geq \xi \gamma_{\max}(\bS_t - t\bM) - \xi^2 \gamma_{\max}( \bV_t). 
\end{equation}
To see this, let $\b{u}$ be a unit vector such that $\b{u}^\top (\bS_t - t\bM_t)\b{u} = \gamma_{\max}(\bS_t - t\bM_t)$. Then 
\begin{align*}
   \gamma_{\max}(\xi (\bS_t - t \bM_t) - \xi^2 \bV_t)  &\geq \b{u}^\top (\xi (\bS_t - t \bM_t) - \xi^2 \bV_t) \b{u} \\ 
   &= \xi \gamma_{\max}(\bS_t - t\bM_t) - \xi^2 \b{u}^\top \bV_t \b{u} \\
   &\geq \xi \gamma_{\max}(\bS_t - t\bM_t) - \xi^2 \gamma_{\max}(\bV_t), 
\end{align*}
since $\gamma_{\max} (\bV_t) \geq \b{u}^\top \bV_t \b{u}$ and $\xi^2\geq 0$. Therefore, 
\begin{equation*}
    \Tr L_t(\xi) \geq \exp\left\{ \gamma_{\max}(\xi (\bS_t - t\bM_t) - \xi^2 \bV_t)\right\} \geq \exp\left\{ \xi \gamma_{\max}(\bS_t - t\bM_t) - \xi^2 \gamma_{\max}(\bV_t)\right\} , 
\end{equation*}
and 
\begin{equation*}
    \int \Tr L_t(\xi)\rho(\xi)\d\xi  \geq \int \exp\left\{\xi \gamma_{\max}(\bS_t - t\bM_t) - \xi^2 \gamma_{\max}(\bV_t) \right\}\rho(\xi)\d\xi = N_t.  
\end{equation*}
Thus, $(N_t)$ is upper bounded by a  nonnegative supermartingale. 

\subsection{Proof of Proposition~\ref{prop:mat-asymp-width}}
\label{proof:mat-asymp-width}

We begin by showing that $\gamma_{\max}(\bV_t)/t \to \gamma_{\max}(\Sigma_\psi)$ almost surely, where $\Sigma_\psi = \E_P[\psi(|\bX - \bM|)]$. Note that for $\b{A},\b{B}\in\calS_d$, $\gamma_{\max}$ (the operator norm) is 1-Lipschitz, i.e., 
\[|\gamma_{\max}(\b{A}) - \gamma_{\max}(\b{B})|\leq \gamma_{\max}(\b{A} - \b{B}).\]
(This follows from Weyl's inequality.) 
Consequently, it suffices to show that 
\[\gamma_{\max}\bigg(\frac{1}{t}\sum_{i\leq t} \psi_E(|\b{X}_i - \bhatX_i|) - \Sigma_\psi\bigg)\to 0,\]
almost surely. The map $F:\calS_d\to\calS_d$ given by $F(\b{A}) = F(\psi_E(|\b{A}|)$ is continuous. Since $\calS_d$ is compact (it is closed and bounded), $F$ is in fact uniformly continuous. In particular, for every $\eps>0$ there exists some $\delta>0$ such that $\gamma_{\max}(\b{A} - \b{B})\leq \delta$ implies $\gamma_{\max}(F(\b{A}) - F(\b{B}))\leq \eps$.

Now, note that $\gamma_{\max}(\bhatX_i - \bM)\to 0$ almost surely, so there exists some sequence $(r_n)$ such that $r_n\to 0$ almost surely with  $\gamma_{\max}(\bhatX_i - \bM)  \leq r_i$. 
Fix $\eps>0$ and let $\delta>0$ be the corresponding modulus of uniform continuity. For all $\omega\in\Omega$ in a probability-one event, there exists some $N = N(\omega)$ such that $
\gamma_{\max}\big(\bhatX_i(\omega) - \bM\big)\leq \delta$ for all $i\geq N$. 
Set
\[
\b{A}_i(\omega) := \bX_i(\omega) - \bhatX_i(\omega),
\qquad
\b{B}_i(\omega) := \bX_i(\omega) - \bM.
\]
Then
\[
\gamma_{\max}\big(\b{A}_i(\omega) - \b{B}_i(\omega)\big)
=
\gamma_{\max}\big(\bhatX_i(\omega) - \bM\big)
\leq \delta,
\]
where 
\[\b{Y}_i = \psi_E(|\bX_i - \bhatX_i|),\quad \overline{\b{Y}}_i = \psi_E(|\bX_i - \bM|).\]
So, by uniform continuity of $F$,
\[
\gamma_{\max}\big(\b{Y}_i(\omega) - \overline{\b{Y}}_i(\omega)\big)
=
\gamma_{\max}\big(F(\b{A}_i(\omega)) - F(\b{B}_i(\omega))\big)
\leq \eps
\quad\text{for all } i\ge N.
\]
Therefore, for $t\geq N$,
\begin{align*}
    \gamma_{\max}\bigg(\frac{1}{t} \sum_{i\leq t}\big[\b{Y}_i(\omega) - \overline{\b{Y}}_i(\omega)\big]\bigg)
    &\leq \frac{1}{t}\sum_{i=1}^{t}  \gamma_{\max}\big(\b{Y}_i(\omega) - \overline{\b{Y}}_i(\omega)\big) \\ 
    &\leq \frac{1}{t}\sum_{i=1}^{N-1} \gamma_{\max}\big(\b{Y}_i(\omega) - \overline{\b{Y}}_i(\omega)\big)
        + \frac{1}{t}\sum_{i=N}^t \eps \\
    &= O(1/t) + \eps,
\end{align*}
where the $O(1/t)$ term may depend on $\omega$ but vanishes as $t\to\infty$.
Taking $\limsup_{t\to\infty}$, we obtain
\[
\limsup_{t\to\infty}
\gamma_{\max}\bigg(\frac{1}{t} \sum_{i\leq t}\big[\b{Y}_i(\omega) - \overline{\b{Y}}_i(\omega)\big]\bigg)
\le \eps.
\]
Since $\eps>0$ was arbitrary, this implies
\[
\gamma_{\max}\bigg(\frac{1}{t} \sum_{i\leq t}\big[\b{Y}_i - \overline{\b{Y}}_i\big]\bigg)
\;\xrightarrow[t\to\infty]{\text{a.s.}}\; 0.
\]
Combining this with
\[
\frac{1}{t}\sum_{i\le t}\overline{\b{Y}}_i \xrightarrow{\text{a.s.}} \Sigma_\psi,
\]
we conclude that
\[
\gamma_{\max}\bigg(\frac{1}{t}\sum_{i\le t}\b{Y}_i\bigg)
=
\gamma_{\max}\bigg(\frac{1}{t}\sum_{i\le t}\psi_E(|\bX_i - \bhatX_i|)\bigg)
\xrightarrow{\text{a.s.}}\gamma_{\max}(\Sigma_\psi),
\]
which is what we wanted to show. The rest of the proof follows that of Lemma~\ref{lem:precise-width}.

\section{Additional theoretical results}

\subsection{On the width of $C_t^\prpl$}
\label{app:wsr-width}

Let \begin{equation*}
    W_t^\prpl = \frac{\log(2/\alpha) + \sum_{i\leq t} (X_i - \muhat_{i-1})^2 \psi_E(\lambda_i)}{\sum_{i\leq t} \lambda_i},
\end{equation*}
be the halfwidth of the closed-form empirical Bernstein bound of \citet{waudby2024estimating}. 
They instantiate this bound with 
\[\lambda_t = \sqrt{\frac{2\log(2/\alpha)}{\sigmahat_{t-1}^2 t\log (1+t)}}\wedge \frac{1}{2}, \quad \sigmahat_{t}^2 = \frac{1/4 + \sum_{i\leq t}(X_i - \muhat_i)^2}{t+1}, \quad \muhat_t = \frac{1/2 + \sum_{i\leq t}X_i}{t+1}.\]
The goal of this section is to analyze the asymptotic width of $W_t^\prpl$ by proving the following proposition: 

\begin{proposition}
\label{prop:wsr-asymp-width}
    Let $(X_t)$ be iid. If $\muhat_t$, $\sigmahat_t$, and $\lambda_t$ are as above, then 
    \begin{equation}
        W_t^\prpl \sim \frac{\sigma}{2} \sqrt{\frac{\log(2/\alpha) \log(t)}{2t}} (1 + \log\log(t)). 
    \end{equation}
\end{proposition}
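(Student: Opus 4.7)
The plan is to separately analyze the numerator and denominator of $W_t^\prpl$ and then combine. Throughout, I will be working in the iid setting, so I may use the SLLN freely, and all ``$\sim$'' statements are to be understood $P$-almost surely.

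\textbf{Step 1 (asymptotics of $\lambda_t$).} By the SLLN, $\muhat_t\xrightarrow{a.s.}\mu$ and $\sigmahat_t^2\xrightarrow{a.s.}\sigma^2$. Consequently, for all $t$ sufficiently large,
\[
\lambda_t = \sqrt{\frac{2\log(2/\alpha)}{\sigmahat_{t-1}^2 \, t\log(1+t)}} \;\sim\; c\,(t\log t)^{-1/2},
\qquad c := \sqrt{\frac{2\log(2/\alpha)}{\sigma^2}},
\]
and in particular the truncation at $1/2$ is inactive eventually.

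\textbf{Step 2 (denominator).} I would approximate $\sum_{i\le t}\lambda_i$ by an integral. Using that $\frac{d}{dt}\bigl[2\sqrt{t/\log t}\bigr]\sim (t\log t)^{-1/2}$, the standard integral test gives
\[
\sum_{i\le t}\lambda_i \;\sim\; c\int_2^t\frac{di}{\sqrt{i\log i}} \;\sim\; 2c\sqrt{\frac{t}{\log t}}.
\]
The early terms where the truncation at $1/2$ is active contribute only an $O(1)$ additive error, hence do not affect this asymptotic.

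\textbf{Step 3 (numerator).} Since $\psi_E(\lambda) = \tfrac{\lambda^2}{2} + O(\lambda^3)$ near $0$ and $\lambda_i\to 0$, we have $\psi_E(\lambda_i)\sim \lambda_i^2/2 \sim \frac{\log(2/\alpha)}{\sigma^2\, i\log i}$. The key step is to show
\[
\sum_{i\le t}(X_i - \muhat_{i-1})^2\psi_E(\lambda_i) \;\sim\; \log(2/\alpha)\,\log\log t.
\]
First expand $(X_i-\muhat_{i-1})^2 = (X_i-\mu)^2 + 2(X_i-\mu)(\mu-\muhat_{i-1}) + (\mu-\muhat_{i-1})^2$; since $\muhat_{i-1}\to\mu$ a.s.\ (with rate $O(i^{-1/2})$ up to log factors by the LIL) and $\psi_E(\lambda_i)=O(1/(i\log i))$, the two cross terms contribute $o(\log\log t)$ almost surely. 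This reduces the problem to analyzing
\[
\sum_{i\le t}(X_i-\mu)^2\,\psi_E(\lambda_i) \;=\; \sum_{i\le t}(X_i-\mu)^2\, w_i,\qquad w_i\sim \frac{\log(2/\alpha)}{\sigma^2\, i\log i}.
\]
For this I would apply a weighted strong law, using that $W_t:=\sum_{i\le t}w_i\to\infty$, that $(X_i-\mu)^2$ is iid with mean $\sigma^2$ and bounded (hence uniformly integrable), and that the weights are predictable. A clean way is Abel summation on $R_t:=\sum_{i\le t}\{(X_i-\mu)^2-\sigma^2\}$: since $R_t/t\to 0$ a.s.\ and the $w_i$ are monotone decreasing with $w_i=O(1/(i\log i))$, one obtains $\sum_{i\le t}\{(X_i-\mu)^2-\sigma^2\}w_i = o(W_t)$. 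Hence $\sum_{i\le t}(X_i-\mu)^2 w_i\sim \sigma^2 W_t\sim \log(2/\alpha)\log\log t$ (using $\sum_{i\le t}\tfrac{1}{i\log i}\sim \log\log t$). Finally, the residual $\log(2/\alpha)$ additive constant in the numerator gives the factor $1+\log\log t$.

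\textbf{Step 4 (combine).} Dividing Step~3 by Step~2,
\[
W_t^\prpl \;\sim\; \frac{\log(2/\alpha)(1+\log\log t)}{2c\sqrt{t/\log t}}
= \frac{\sigma}{2}\sqrt{\frac{\log(2/\alpha)\log t}{2t}}(1+\log\log t),
\]
which is the claim.

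\textbf{Main obstacle.} The delicate step is Step~3: handling a weighted sum of $(X_i-\muhat_{i-1})^2$ where both the summands and the weights are random and correlated with the history, and showing the deviation from the deterministic $\sigma^2 w_i$ behavior is lower order. The Abel summation argument combined with the SLLN rates for $\muhat_t$ and $\sigmahat_t$ should suffice, but one must carefully track that the error from replacing $\sigmahat_{i-1}^2$ by $\sigma^2$ inside $\psi_E(\lambda_i)$ also only contributes $o(\log\log t)$, which follows from a Taylor expansion of $1/\sigmahat_{i-1}^2$ around $\sigma^2$ and summation by parts.
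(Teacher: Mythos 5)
Your proposal is correct, and its skeleton is the same as the paper's: analyze the denominator $\sum_{i\le t}\lambda_i$ and the numerator $\sum_{i\le t}(X_i-\muhat_{i-1})^2\psi_E(\lambda_i)$ separately, reduce the random weights to the deterministic ones $\asymp 1/(i\log i)$ via $\sigmahat_{i-1}\to\sigma$ and $\psi_E(\lambda)\sim\lambda^2/2$, use the integral asymptotic $\sum_{k\le t}(k\log k)^{-1/2}\sim 2\sqrt{t/\log t}$, and expand $(X_i-\muhat_{i-1})^2$ about $\mu$ exactly as in the paper. Where you differ is in the supporting machinery for the numerator: the paper proves a weighted SLLN for $\sum_i (X_i-\mu)^2 w_i$ via Kolmogorov's three-series theorem (plus a separate lemma absorbing multiplicative $(1+o(1))$ factors), and disposes of the two error terms by a monotone-convergence/expectation bound and a martingale argument; you instead use Abel summation against the ordinary SLLN ($R_t=o(t)$ with monotone weights $w_i\asymp 1/(i\log(i+1))$ gives $\sum_i\{(X_i-\mu)^2-\sigma^2\}w_i=o(\log\log t)$) and kill the cross terms using the LIL rate $|\muhat_{i-1}-\mu|=O(\sqrt{\log\log i/i})$ a.s., which makes those sums outright convergent. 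Both routes are sound; your Abel-summation argument is arguably more elementary and self-contained, while the paper's weighted-SLLN lemma is stated in a reusable form (it is invoked again for the Hoeffding CS analysis) and its martingale treatment of the cross term avoids invoking the LIL. The one step you correctly flag as delicate — replacing $\sigmahat_{i-1}^2$ by $\sigma^2$ inside the weights with only $o(\log\log t)$ damage — is exactly what the paper's lemma on sums of the form $\sum_i a_i(1+b_i)$ with $b_i\to 0$ handles, so your sketch closes in the same way.
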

We prove Proposition~\ref{prop:wsr-asymp-width} via a sequence of lemmas. The first two are general statements regarding almost sure convergence that we had trouble finding elsewhere. 

\begin{lemma} 
\label{lem:sum-asymptotics}
    Suppose a nonnegative sequence $(a_t)$ has partial sums which diverge, and suppose another sequence $(b_t)$ converges to 0. Then $\sum_{i\leq t} a_i(1 + b_i) \sim \sum_{i\leq t} a_i$. The same statement holds if convergence and divergence hold almost surely. 
\end{lemma}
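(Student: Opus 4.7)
The plan is to reduce the claim to showing that $\sum_{i\leq t} a_i b_i = o\bigl(\sum_{i\leq t} a_i\bigr)$, which follows from a standard $\varepsilon$-$N$ splitting argument of the sort used in Cesàro-mean proofs.

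First I would write
\begin{equation*}
\sum_{i\leq t} a_i(1+b_i) = \sum_{i\leq t} a_i + \sum_{i\leq t} a_i b_i,
\end{equation*}
so that dividing through by $S_t := \sum_{i\leq t} a_i$ (which is eventually strictly positive, else the right-hand side would not diverge) the claim $\sum_{i\leq t} a_i(1+b_i)\sim S_t$ is equivalent to $T_t/S_t \to 0$, where $T_t := \sum_{i\leq t} a_i b_i$. Note we must work with $|b_i|$ rather than $b_i$ since $b_i$ can be negative.

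Next, fix $\varepsilon>0$. Since $b_i\to 0$, there exists $N=N(\varepsilon)$ such that $|b_i|<\varepsilon$ for all $i\geq N$. Split
\begin{equation*}
|T_t| \leq \sum_{i<N} a_i|b_i| + \varepsilon \sum_{i=N}^t a_i \leq C_N + \varepsilon S_t,
\end{equation*}
where $C_N := \sum_{i<N}a_i|b_i|$ is a finite constant (depending only on $\varepsilon$, not $t$), using nonnegativity of the $a_i$. Dividing by $S_t$ gives $|T_t|/S_t \leq C_N/S_t + \varepsilon$. Taking $\limsup_{t\to\infty}$ and using $S_t\to\infty$ yields $\limsup_t |T_t|/S_t \leq \varepsilon$; since $\varepsilon$ was arbitrary we conclude $T_t/S_t\to 0$, which is the desired claim.

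For the almost sure version, the entire argument above is pointwise in $\omega$: on the probability-one event where $b_i(\omega)\to 0$ and $S_t(\omega)\to\infty$, the exact same splitting (with $N$ and $C_N$ now depending on $\omega$) gives $T_t(\omega)/S_t(\omega)\to 0$. I do not expect any serious obstacle here; the only subtlety worth flagging is that $C_N$ may depend on $\omega$ in the almost sure case, but this is harmless since $S_t(\omega)\to\infty$ on the same event.
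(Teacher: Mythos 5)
Your proposal is correct and follows essentially the same route as the paper's proof: decompose the sum as $S_t + \sum_{i\leq t} a_i b_i$, show the second term is $o(S_t)$ via the $\varepsilon$-$N$ splitting with the constant $C_N$, and conclude by taking the limsup; the pointwise-in-$\omega$ remark handles the almost sure case just as the paper intends.
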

\begin{proof}
Let $A_t = \sum_{i\leq t}a_i$ and $C_t = \sum_{i\leq t} a_ib_i$. 
Write $
\sum_{i=1}^t a_i(1+b_i) = A_t + C_t$. 
We claim $C_t = o(A_t)$, which implies
\[
\frac{\sum_{i=1}^t a_i(1+b_i)}{A_t}
= 1 + \frac{C_t}{A_t} \longrightarrow 1,
\]
which is the desired conclusion. 
Fix $\eps>0$. Since $b_t \to 0$, there exists $N$ such that
$|b_i|\le \eps$ for all $i\ge N$. For $t \ge N$,
\[
|C_t|
\le \sum_{i=1}^{N-1} a_i |b_i| + \sum_{i=N}^t a_i |b_i|
\le C_N + \eps \sum_{i=N}^t a_i,
\]
where $C_N := \sum_{i=1}^{N-1} a_i |b_i| < \infty$ is a constant. Dividing by $A_t$ and using $A_t = \sum_{i=1}^t a_i$,
\[
\frac{|C_t|}{A_t}
\le \frac{C}{A_t} + \eps \cdot \frac{\sum_{i=N}^t a_i}{A_t}
= \frac{C}{A_t} + \eps \left(1 - \frac{\sum_{i=1}^{N-1} a_i}{A_t}\right).
\]
As $t\to\infty$ we have $A_t\to\infty$, so $C/A_t\to 0$ and
$\sum_{i=1}^{N-1} a_i / A_t \to 0$. Hence
\[
\limsup_{t\to\infty} \frac{|C_t|}{A_t} \le \eps.
\]
Since $\eps>0$ was arbitrary, we conclude $C_t/A_t \to 0$, i.e.\ $C_t = o(A_t)$ as claimed.
\end{proof}

\begin{lemma}[Weighted SLLN]
\label{lem:weighted-slln}
    Let $(Y_t)$ be a sequence of bounded iid random variables with mean $\mu$ and let $(w_t)$ be a sequence of positive deterministic weights such that $\sum_{i=1}^\infty w_i = \infty$ and $\sum_{i=1}^\infty w_i^2<\infty$.  Then 
    \begin{equation}
      \frac{\sum_{i\leq t} w_i Y_i}{\sum_{i\leq t} w_i} \xrightarrow{a.s.}\mu,  
    \end{equation}
    and consequently 
    \begin{equation}
        \sum_{i\leq t}w_i Y_i \sim \mu\sum_{i\leq t}w_i. 
    \end{equation}
\end{lemma}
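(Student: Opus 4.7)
The plan is to prove the first conclusion via the classical combination of Kolmogorov's convergence theorem and Kronecker's lemma, and then derive the asymptotic equivalence as an immediate corollary.

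Set $W_t := \sum_{i \leq t} w_i$ and center the data by $Z_i := w_i(Y_i - \mu)$, so that $(Z_i)$ is independent, mean-zero, and (since $Y_i$ is bounded) satisfies $\mathrm{Var}(Z_i) = w_i^2 \sigma^2$ where $\sigma^2 := \mathrm{Var}(Y_1) < \infty$. Because the weights are positive, $W_n \geq w_1 > 0$ for all $n$, hence $\mathrm{Var}(Z_n/W_n) \leq (\sigma^2/w_1^2)\, w_n^2$. The hypothesis $\sum_n w_n^2 < \infty$ therefore gives $\sum_n \mathrm{Var}(Z_n/W_n) < \infty$, and Kolmogorov's one-series theorem yields almost-sure convergence of $\sum_n Z_n/W_n$.

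Since $W_t \uparrow \infty$ by the divergence hypothesis $\sum_i w_i = \infty$, Kronecker's lemma, applied with normalizer $W_t$ and summands $Z_k = W_k \cdot (Z_k/W_k)$, then gives $W_t^{-1}\sum_{i\leq t} Z_i \to 0$ almost surely. Unwinding the definition of $Z_i$ yields exactly the first claim $\sum_{i\leq t} w_i Y_i / W_t \to \mu$ a.s. The equivalent formulation $\sum_{i\leq t} w_i Y_i \sim \mu W_t$ then follows immediately in the nontrivial case $\mu \neq 0$ by dividing both sides by $\mu W_t$.

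There is no substantial obstacle, since the result is a textbook application of the two classical tools. The only care needed is to apply Kolmogorov's theorem to the normalized variables $Z_n/W_n$ rather than to the raw $Z_n$, whose variances $w_n^2 \sigma^2$ need not be summable on their own; the crude lower bound $W_n \geq w_1$ is the cheap trick that makes the variance series summable and lets Kronecker's lemma then do the rest of the work.
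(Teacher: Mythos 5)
Your proposal is correct. It follows the same basic template as the paper's proof (center the variables, reduce to almost-sure convergence of a random series), but the machinery differs slightly: the paper applies Kolmogorov's three-series theorem directly to $Z_n = w_n(Y_n-\mu)$ — the truncation being vacuous because the summands are uniformly bounded and $\sup_n w_n<\infty$ — and concludes that $\sum_n w_n(Y_n-\mu)$ itself converges a.s., after which no Kronecker-type argument is needed: one simply divides the (a.s. convergent, hence bounded) partial sums by the divergent normalizer $\sum_{i\le t} w_i$. You instead prove convergence of the normalized series $\sum_n Z_n/W_n$ via the one-series (Khintchine--Kolmogorov) theorem and then invoke Kronecker's lemma. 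Both are valid under the stated hypotheses; your crude bound $W_n\ge w_1$ correctly transfers $\sum_n w_n^2<\infty$ into summability of $\operatorname{Var}(Z_n/W_n)$, and your route has the side benefit of being the standard argument that would still work under the weaker condition $\sum_n w_n^2/W_n^2<\infty$, whereas the paper's route establishes the stronger intermediate fact that the weighted centered series converges. Your caveat that the asymptotic-equivalence statement $\sum_{i\le t} w_iY_i\sim \mu\sum_{i\le t} w_i$ requires $\mu\neq 0$ is appropriate (the paper applies the lemma with a strictly positive mean, and its $\sim$ is understood in the almost-sure sense), so nothing is missing.
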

\begin{proof}
    We appeal to Kolmogorov's three-series theorem, which states that if $(Z_n)$ is a sequence of independent random variables, then $\sum_{n=1}^\infty Z_n$ converges almost surely if there exists some $c>0$ such that, letting $T_n = Z_n \mathbf{1}\{|Z_n| \leq c\}$, the following three series converge: (i) $\sum_{n=1}^\infty P(|Z_n|\geq c)$, (ii) $\sum_{n=1}^\infty \E T_n$, and (iii) $\sum_{n=1}^\infty \E[(T_n - \E T_n)^2]$.  Let us apply this result with $Z_n = w_n(Y_n-\mu)$ and any $c\geq \sup_n |w_n|K$ where $K \geq\sup_n |Y_n - \mu|$ (which exists by assumption), so that $T_n = Z_n$.  Let $\sigma^2 = \E[(Y_1-\mu)^2]$.  For the series in (i), we have $P(|w_n (Y_n-\mu)\geq c)=0$ by our selection of $c$. Similarly, 
    for (ii), we have $\E[T_n] = \E[w_n (Y_n-\mu)] = w_n \E[Y_n-\mu] = 0$. And for (iii), $c$ once again has no effect, and 
    $\sum_{n=1}^\infty \E[(T_n - \E T_n)^2] = \sum_{n=1}^\infty \E[w_n^2 (Y_n-\mu)^2]  \leq \sigma^2 \sum_{n=1}^\infty w_n^2 < \infty$. We conclude that $\sum_{n=1}^\infty w_n(Y_n-\mu)$ converges almost surely, hence $\sum_n w_n(Y_n -\mu)/ \sum_n w_n$ converges almost surely to 0. This is the desired conclusion. 
\end{proof}

The following two lemmas help us analyze the asymptotics of $\sum_{i\leq t} \lambda_i$. 

\begin{lemma}
\label{lem:1/ilogi}
    \begin{equation*}
        \sum_{k=1}^t \frac{1}{\sqrt{k\log(k+1))}} \sim 2\sqrt{\frac{t}{\log t}}. 
    \end{equation*}
\end{lemma}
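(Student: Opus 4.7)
The plan is a standard integral comparison followed by explicit antidifferentiation. Let $f(x) := 1/\sqrt{x\log(x+1)}$. Since $f$ is positive, continuous, and eventually strictly decreasing on $[2,\infty)$, for every $k \geq 2$ we have
\begin{equation*}
\int_k^{k+1} f(x)\,dx \;\leq\; f(k) \;\leq\; \int_{k-1}^k f(x)\,dx,
\end{equation*}
so telescoping gives $\sum_{k=2}^t f(k) = \int_1^t f(x)\,dx + O(1)$ as $t \to \infty$. Hence it suffices to show
\begin{equation*}
\int_1^t \frac{dx}{\sqrt{x\log(x+1)}} \;\sim\; 2\sqrt{\frac{t}{\log t}}.
\end{equation*}

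For the integral, I would first replace $\log(x+1)$ by $\log x$ on $[2,t]$: writing $\log(x+1) = \log x + \log(1 + 1/x)$ and using $\log(1+1/x) = O(1/x)$ yields $1/\sqrt{\log(x+1)} = (1+o(1))/\sqrt{\log x}$ uniformly on $[M,t]$ for any $M\to\infty$ chosen slowly, so by Lemma~\ref{lem:sum-asymptotics} (or its integral analogue) the $\log(x+1)$ integral and the $\log x$ integral are asymptotically equal. Then I would verify by direct differentiation that
\begin{equation*}
\frac{d}{dx}\!\left(\frac{2\sqrt{x}}{\sqrt{\log x}}\right) \;=\; \frac{1}{\sqrt{x\log x}}\left(1 - \frac{1}{\log x}\right) \;=\; \frac{1}{\sqrt{x\log x}}\bigl(1 + o(1)\bigr).
\end{equation*}
Integrating this identity from $2$ to $t$ and noting that the boundary term at $2$ is a bounded constant gives
\begin{equation*}
\int_2^t \frac{dx}{\sqrt{x\log x}} \;=\; \frac{2\sqrt{t}}{\sqrt{\log t}} + \int_2^t \frac{dx}{\sqrt{x}\,(\log x)^{3/2}} + O(1),
\end{equation*}
and the remainder integral is $o(\sqrt{t/\log t})$ by the same antiderivative calculation (it scales as $\sqrt{t}/(\log t)^{3/2}$).

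Combining the two steps gives $\sum_{k=1}^t f(k) = 2\sqrt{t/\log t}(1+o(1))$, as claimed. There is no substantial obstacle here; the only care required is keeping track of which lower-order terms are absorbed into $o(1)$ and handling the mild discrepancy between $\log x$ and $\log(x+1)$, both of which are standard.
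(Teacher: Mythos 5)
Your proposal is correct. Both you and the paper begin with the same integral comparison, reducing the claim to the asymptotics of $I(t)=\int_1^t \frac{dx}{\sqrt{x\log(x+1)}}$, and both ultimately rest on the same derivative computation $\frac{d}{dx}\bigl(2\sqrt{x/\log x}\bigr)=\frac{1}{\sqrt{x\log x}}\bigl(1-\frac{1}{\log x}\bigr)$. Where you diverge is in how that computation is used: the paper checks $I(t)\to\infty$ and applies l'H\^opital's rule directly to the ratio $I(t)/F(t)$ with $F(t)=2\sqrt{t/\log t}$, which lets it compare $I'(t)=1/\sqrt{t\log(t+1)}$ with $F'(t)$ in one stroke and thereby sidesteps any separate handling of the $\log(x+1)$ versus $\log x$ discrepancy; you instead swap $\log(x+1)$ for $\log x$ first (via the uniform $1+o(1)$ ratio and the integral analogue of Lemma~\ref{lem:sum-asymptotics}), then integrate the derivative identity by the fundamental theorem of calculus and bound the remainder $\int_2^t \frac{dx}{\sqrt{x}(\log x)^{3/2}}$, which is indeed $\asymp \sqrt{t}/(\log t)^{3/2}=o(\sqrt{t/\log t})$ by iterating the same antiderivative trick. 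Your route is slightly more elementary (no l'H\^opital) and makes the size of the error terms explicit, at the cost of the extra $\log(x+1)\to\log x$ reduction and the remainder bookkeeping; the paper's route is shorter but needs the divergence check to legitimize l'H\^opital. Only cosmetic care is needed on your side (start the sum/integral at $k=2$ so $\log k>0$, and phrase the uniform replacement with a fixed $\eps$ and threshold $M_\eps$ rather than an ``$M\to\infty$ chosen slowly''), but there is no gap.
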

\begin{proof}
For $x\geq 1$ let $
f(x) = 1/\sqrt{x\log (x+1)}$. 
Then $f$ is positive and decreasing on $(1,\infty)$, so 
\[
\int_1^t f(x)\,dx \;\le\; \sum_{k=1}^t f(k)
\;\le\; f(1) + \int_1^t f(x)\,dx.
\]
Define
\[
I(t) := \int_1^t \frac{dx}{\sqrt{x\log (x+1)}}.
\]
The inequalities above show that $
\sum_{k=1}^t f(k) = I(t) + O(1)$. 
In particular, if $I(t)\to\infty$ then 
\begin{equation}
\label{eq:pf-ilogi-1}
\frac{\sum_{k=1}^t f(k)}{I(t)} \longrightarrow 1.    
\end{equation}
In particular, if we can show that $I(t) \sim 2\sqrt{t/\log(t)}=:F(t)$, since in that case~\eqref{eq:pf-ilogi-1} holds and we  thus have $\sum_{k=1}^t f(k) \sim I(t) \sim F(t)$ which is the desired result. 

To see that $I(t) \sim F(t)$ first compute 
\[
F'(t)
= \frac{1}{\sqrt{t\log t}}
  \left(1 - \frac{1}{\log t}\right) \text{~ and ~} I'(t) = \frac{1}{\sqrt{t\log(t+1)}}. 
\]
Next note that $I(t)\xrightarrow{t\to\infty}\infty$ since 
\[I(t) \geq \int_{t/2}^t f(x) dx \geq \frac{t}{2} f(t) = \frac{1}{2}\sqrt{\frac{t}{\log(t+1)}} \to\infty. \]
Therefore, since both $I(t)$ and $F(t)$ tend to $+\infty$ as $t\to\infty$, so we can
apply l'H\^opital's rule to analyze the limit of the ratio $I(t)/F(t)$:
\[
\lim_{t\to\infty} \frac{I(t)}{F(t)}
= \lim_{t\to\infty} \frac{I'(t)}{F'(t)}
= \lim_{t\to\infty}
\frac{\sqrt{t\log t}}{\sqrt{t\log(t+1)}(1  - 1/\log t)} 
= 1.
\]
Thus $I(t) \sim F(t)$, which completes the proof. 
\end{proof}

\begin{lemma}
\label{lem:lambdat-asymptotics}
    \begin{equation}
        \sum_{i=1}^t\lambda_i \sim 2\sqrt{\frac{2\log(2/\alpha)}{\sigma^2}} \sqrt{\frac{t}{\log t}}. 
    \end{equation}
\end{lemma}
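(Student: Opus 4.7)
The plan is to peel off the truncation, replace $\sigmahat_{t-1}^2$ by its almost-sure limit $\sigma^2$, and then invoke Lemmas~\ref{lem:sum-asymptotics} and \ref{lem:1/ilogi}.

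First I would argue that the truncation $\wedge\,1/2$ in the definition of $\lambda_t$ is eventually inactive. Since $\muhat_{t-1}\to\mu$ and $t^{-1}\sum_{i\leq t-1}X_i^2\to\E[X_1^2]$ almost surely by the SLLN, a routine manipulation gives $\sigmahat_{t-1}^2\to\sigma^2$ almost surely. Consequently
\[
\sqrt{\frac{2\log(2/\alpha)}{\sigmahat_{t-1}^2\, t\log(1+t)}}\xrightarrow{a.s.}0,
\]
so almost surely there exists a (random) $T$ with $\lambda_t=\sqrt{2\log(2/\alpha)/(\sigmahat_{t-1}^2 t\log(1+t))}$ for all $t\geq T$. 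The first $T-1$ terms contribute at most $T/2=O(1)$ to $\sum_{i\leq t}\lambda_i$, which is negligible once we divide by $\sqrt{t/\log t}\to\infty$.

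Next, set $c_\alpha=\sqrt{2\log(2/\alpha)/\sigma^2}$ and write, for $i\geq T$,
\[
\lambda_i = \frac{c_\alpha}{\sqrt{i\log(i+1)}}\cdot\frac{\sigma}{\sigmahat_{i-1}}
= a_i\,(1+b_i),
\qquad a_i:=\frac{c_\alpha}{\sqrt{i\log(i+1)}},\quad b_i:=\frac{\sigma}{\sigmahat_{i-1}}-1.
\]
Then $a_i>0$, $b_i\to 0$ almost surely, and $\sum a_i=\infty$ (by Lemma~\ref{lem:1/ilogi}). Applying Lemma~\ref{lem:sum-asymptotics} (in its almost-sure form) yields
\[
\sum_{i=1}^t\lambda_i \;\sim\; \sum_{i=1}^t a_i \;=\; c_\alpha\sum_{i=1}^t\frac{1}{\sqrt{i\log(i+1)}}\quad P\text{-a.s.},
\]
and then Lemma~\ref{lem:1/ilogi} gives
\[
\sum_{i=1}^t\lambda_i \;\sim\; 2c_\alpha\sqrt{\frac{t}{\log t}} \;=\; 2\sqrt{\frac{2\log(2/\alpha)}{\sigma^2}}\sqrt{\frac{t}{\log t}},
\]
which is the claim.

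The only subtle point is the bookkeeping around the random threshold $T$ after which the truncation is inactive and the approximation $\sigmahat_{i-1}\approx\sigma$ kicks in; but since the discarded terms form an almost-surely bounded prefix and the target rate diverges, they do not affect the asymptotic equivalence. Everything else is a direct combination of the SLLN with the two supporting lemmas.
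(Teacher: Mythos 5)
Your proposal is correct and follows essentially the same route as the paper: write $\lambda_i$ as the deterministic term $\sqrt{2\log(2/\alpha)/(\sigma^2 i\log(i+1))}$ times a factor $(1+s_i)$ with $s_i\to 0$ almost surely (via $\sigmahat_{i-1}\to\sigma$), then conclude with Lemmas~\ref{lem:sum-asymptotics} and~\ref{lem:1/ilogi}. Your explicit handling of the truncation $\wedge\,1/2$ being eventually inactive is a nice extra bit of care that the paper's proof leaves implicit.
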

\begin{proof}
The strong law of large numbers guarantees that there exists a sequence $(s_t)$ which converges to 0 almost surely such that $\sigma/\sigmahat_{t-1} = 1 + s_t$. Therefore, 
\begin{align*}
    \sum_{i\leq t} \lambda_i &= \sum_{i\leq t} \sqrt{\frac{2\log(2/\alpha)}{\sigmahat_{i-1}^2 i\log(i+1)}\cdot \frac{\sigma^2}{\sigma^2}} 
    = \sum_{i\leq t} \sqrt{\frac{2\log(2/\alpha)}{\sigma^2 i\log(i+1)}} (1 + s_i) \sim \sum_{i\leq t} \sqrt{\frac{2\log(2/\alpha)}{\sigma^2 i\log(i+1)}}. 
\end{align*}
The proof is then completed by appealing to Lemmas~\ref{lem:sum-asymptotics} and~\ref{lem:1/ilogi}. 
\end{proof}

Finally, we analyze the asymptotics of the empirical variance-like term. 
\begin{lemma}
\label{lem:psiE-asymptotics}
\begin{equation}
    \sum_{i\leq t} (X_i-\muhat_{i-1})^2\psi_E(\lambda_i) \sim \log(2/\alpha)\log\log t,
\end{equation}  
\end{lemma}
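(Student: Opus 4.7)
The plan is to analyze the summand asymptotically and then apply the weighted strong law (Lemma~\ref{lem:weighted-slln}) together with the ``multiplicative perturbation'' fact (Lemma~\ref{lem:sum-asymptotics}). Because $\lambda_i$ decays like $1/\sqrt{i\log i}$, the truncation at $1/2$ is eventually inactive, and Taylor-expanding $\psi_E(\lambda)=\lambda^2/2+O(\lambda^3)$ near zero gives
\begin{equation*}
\psi_E(\lambda_i)=\frac{\log(2/\alpha)}{\sigmahat_{i-1}^2\,i\log(i+1)}\bigl(1+o_{\text{a.s.}}(1)\bigr).
\end{equation*}
Combining this with $\sigmahat_{i-1}^2\xrightarrow{a.s.}\sigma^2$ (SLLN applied to $(X_i-\muhat_i)^2$), we obtain $\psi_E(\lambda_i)=\frac{\log(2/\alpha)}{\sigma^2\,i\log(i+1)}(1+\rho_i)$ with $\rho_i\xrightarrow{a.s.}0$.

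Next, expand $(X_i-\muhat_{i-1})^2=(X_i-\mu)^2+2(X_i-\mu)\delta_{i-1}+\delta_{i-1}^2$ where $\delta_{i-1}:=\mu-\muhat_{i-1}\xrightarrow{a.s.}0$. This splits the target sum into three pieces $S_1+S_2+S_3$. For the dominant piece, I would first use Lemma~\ref{lem:weighted-slln} with $Y_i=(X_i-\mu)^2$ (bounded, iid, mean $\sigma^2$) and weights $w_i=1/(i\log(i+1))$ (for which $\sum w_i=\infty$ and $\sum w_i^2<\infty$) to get $\sum_{i\leq t}w_i(X_i-\mu)^2\sim \sigma^2\sum_{i\leq t}w_i$. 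A routine integral comparison, analogous to Lemma~\ref{lem:1/ilogi}, yields $\sum_{i\leq t}1/(i\log(i+1))\sim\log\log t$. Absorbing the factor $(1+\rho_i)$ into the leading term via Lemma~\ref{lem:sum-asymptotics} (applied to the nonnegative sequence $w_i(X_i-\mu)^2$ once $1+\rho_i>0$ eventually) gives $S_1\sim \log(2/\alpha)\log\log t$.

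The remaining pieces $S_2$ and $S_3$ are $o(\log\log t)$ by a standard Ces\`aro-type argument: if $\eps_i\to 0$ and $(w_i)$ is nonnegative with divergent partial sums, then $\sum_{i\leq t}w_i\eps_i=o(\sum_{i\leq t}w_i)$ (split at the threshold $|\eps_i|<\eps$ and let $t\to\infty$, then $\eps\downarrow 0$). Apply this with $\eps_i=\delta_{i-1}^2$ to kill $S_3$, and with $\eps_i=2(X_i-\mu)\delta_{i-1}$ (bounded in absolute value by $2|\delta_{i-1}|\to 0$ since $X_i\in[0,1]$) to kill $S_2$. Summing yields the claimed equivalence.

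The main obstacle is purely bookkeeping: three independent layers of approximation (Taylor expansion of $\psi_E$, consistency of $\sigmahat_{i-1}$, and replacement of $\muhat_{i-1}$ by $\mu$) must all be shown to contribute at lower order than $\log\log t$. No single step is deep; each reduces to Lemma~\ref{lem:sum-asymptotics}, Lemma~\ref{lem:weighted-slln}, or an integral test. One minor subtlety to verify is that although $1+\rho_i$ need not be nonnegative for small $i$, a finite prefix contributes $O(1)$ to every $S_k$ and can be absorbed into the $o(\log\log t)$ remainder before applying Lemma~\ref{lem:sum-asymptotics} on the tail.
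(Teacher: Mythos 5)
Your proof is correct and follows the same overall skeleton as the paper's: replace $\psi_E(\lambda_i)$ by $\lambda_i^2/2$ up to a $(1+o(1))$ factor, replace $\sigmahat_{i-1}^2$ by $\sigma^2$, expand $(X_i-\muhat_{i-1})^2$ around $\mu$, treat the leading term with the weighted SLLN (Lemma~\ref{lem:weighted-slln}) together with $\sum_{i\le t}1/(i\log(i+1))\sim\log\log t$, and absorb the multiplicative $(1+\rho_i)$ errors with Lemma~\ref{lem:sum-asymptotics}. The one place you genuinely diverge is the treatment of the two remainder terms. The paper bounds the squared-bias term by showing its expected sum is finite (hence the sum is $O(1)$ a.s.) and handles the cross term with a martingale argument, both yielding $O(1)$; you instead observe that $\delta_{i-1}^2$ and $2(X_i-\mu)\delta_{i-1}$ tend to zero a.s.\ (the latter because $|X_i-\mu|\le 1$) and invoke a Ces\`aro/Toeplitz argument to conclude both contributions are $o\!\left(\sum_{i\le t}w_i\right)=o(\log\log t)$, which is all that is needed. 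Your route is more elementary: it uses only $\muhat_{i-1}\to\mu$ a.s.\ and boundedness, and it sidesteps the paper's martingale step, which is stated rather loosely there (a.s.\ boundedness of the partial sums does not follow from $\E[M_t]=0$ alone and really wants an $L^2$-bounded martingale convergence argument using $\sum_i w_i^2<\infty$); the price is a weaker $o(\log\log t)$ bound in place of $O(1)$, which is immaterial for the claimed asymptotic equivalence. The bookkeeping caveats you flag — eventual inactivity of the $\wedge\,1/2$ truncation since $\lambda_i\to 0$ a.s., and discarding a finite prefix (where $1+\rho_i$ could be nonpositive) before applying Lemma~\ref{lem:sum-asymptotics} — are handled correctly.
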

\begin{proof}
    \citet{waudby2024estimating} show that $\lambda^2/(2\psi_E(\lambda)) \to 1$ as $\lambda\to 0$ (note their definition of $\psi_E(\lambda)$ is scaled by a factor of 4). Therefore, 
    \[\frac{\lambda_t^2/2}{\psi_E(\lambda_t)} = 1 + s_t,\]
    where $s_t\to 0$ almost surely. Moreover, $\sigmahat_{t-1}^2/\sigma^2 = 1 + r_t$ for $r_t\to 0$ almost surely, hence 
    \begin{align}
        \sum_{i\leq t} (X_i - \muhat_{i-1})^2 \psi_E(\lambda_i) & = \frac{1}{2}\sum_{i\leq t} \lambda_i^2 (X_i - \muhat_{i-1})^2 (1 + s_i) \\ 
        &= \log(2/\alpha) \sum_{i\leq t}  \frac{(X_i - \muhat_{i-1})^2}{\sigmahat_{i-1}^2 i\log(i+1)} (1 + s_i) \\ 
        &= \frac{\log(2/\alpha)}{\sigma^2} \sum_{i\leq t}  \frac{(X_i - \muhat_{i-1})^2}{i\log(i+1)} (1 + s_i)(1 + r_i). \label{eq:pf-psiE-asymptotics-1}
    \end{align}
    Let $A_t = \sum_{i\leq t} (X_i - \muhat_{i-1})^2 / (i\log(i+1))$ and note that $(X_i - \muhat_{i-1})^2 = (X_i - \mu)^2 + (\muhat_{i-1}-\mu)^2 - 2(X_i -\mu)(\muhat_{i-1}-\mu)$, so 
    \begin{equation*}
        A_t = \sum_{i\leq t} \frac{(X_i - \muhat_{i-1})^2}{i\log(i+1)} = \underbrace{\sum_{i\leq t} \frac{(X_i - \mu)^2}{i\log(i+1)}}_{(i)} + \underbrace{\sum_{i\leq t} \frac{(\mu - \muhat_{i-1})^2}{i\log(i+1)}}_{(ii)} -\underbrace{2 \sum_{i\leq t} \frac{(X_i - \mu)(\muhat_{i-1}-\mu)}{i\log(i+1)}}_{(iii)}. 
    \end{equation*}
    Letting $w_i = (i \log(i+1))^{-1}$, term (i) is asymptotically equivalent to $\sigma^2\log\log(t)$ by  Lemma~\ref{lem:weighted-slln}.  For (ii), note that $\E[(\muhat_{t-1} - \mu)^2] \leq C/t$ for some constant $C>0$. Therefore, applying the monotone convergence theorem, 
    \begin{align*}
        \E\bigg[\sum_{i\leq t} \frac{(\mu - \muhat_{i-1})^2}{i \log(i+1)}\bigg] &= \sum_{i\leq t} \frac{\E(\mu - \muhat_{i-1})^2}{i\log(i+1)} \leq \sum_{i\leq t} \frac{C}{i^2\log(i+1)} <\infty.  
    \end{align*}
    This shows that the sum in (ii) is $O(1)$ almost surely. Indeed, if it was $o(1)$ with some positive probability, then the expectation would be infinite. 
    Finally, (iii) can be handled by a martingale convergence theorem. Note that $\mu_{t-1}-\mu$ is $\calF_{t-1}$-measurable. Hence $N_t = (X_t - \mu)(\muhat_{t-1}-\mu)$ satisfies $\E[N_t|\calF_{t-1}] = 0$, so letting $M_t = \sum_{i\leq t}w_i N_i$ where $w_i = 1/(i\log(i+1))$ we have 
    \begin{equation*}
        \E\left[M_t\right] = 
        \E\left[\E\left[M_t|\calF_{t-1}\right]\right] = \E[M_{t-1}] = \dots = 0, 
    \end{equation*}
    once again implying that $\sup_t M_t$ is $O(1)$ almost surely, which in turn demonstrates that the sum in (iii) is $O(1)$ almost surely. Therefore, $A_t \sim \sigma^2\log\log(t)$. We may therefore invoke Lemma~\ref{lem:sum-asymptotics} to see that 
    \begin{align*}
        \frac{\log(2/\alpha)}{\sigma^2}\sum_{i\leq t}  \frac{(X_i - \muhat_{i-1})^2}{i\log(i+1)} (1 + s_i)(1 + r_i)  \sim \frac{\log(2/\alpha)}{\sigma^2} A_t \sim \log(2/\alpha) \log\log(t), 
    \end{align*}
    which when combined with~\eqref{eq:pf-psiE-asymptotics-1} is the desired conclusion.  
\end{proof}

From here, using Lemmas~\ref{lem:lambdat-asymptotics} and \ref{lem:psiE-asymptotics}, we have 
\begin{align*}
    W_t^\prpl &\sim \frac{\log(2/\alpha) + \log(2/\alpha)\log\log t}{\frac{2}{\sigma}\sqrt{2t\log(2/\alpha)/\log(t)}} \\ 
    %&= \frac{\sigma\sqrt{\log(2/\alpha)}}{2\sqrt{2}} \sqrt{\frac{\log(t)}{t}} ( 1 + \log\log t) \\ 
    &= \frac{1}{2} \sqrt{\frac{\sigma^2 \log(2/\alpha) \log(t)}{2t}}(1 + \log\log t),
\end{align*}
which is precisely the claim in Proposition~\ref{prop:wsr-asymp-width}. 

Meanwhile, we might be curious about the case when the asymptotic width does not depend on $\alpha$. Consider choosing $\lambda_t$ independent of $\alpha$, so  
\[\lambda_t = \sqrt{\frac{2}{\sigma_{t-1}^2 t\log(1+t)}}\wedge \frac{1}{2}.\]
Then by slightly modifying the lemmas above we may conclude that $\sum_{i\leq t} \lambda_i \sim 2\sqrt{2t / (\sigma^2\log t)}$ and $\sum_{i\leq t}(X_i - \muhat_{i-1})^2 \psi_E(\lambda_i)\sim \log\log(t)$. Therefore, in this case,  
\begin{align*}
    W_t^\prpl \sim \frac{\log(2/\alpha) + \log\log t}{\frac{2}{\sigma}\sqrt{2t/\log(t)}} = \frac{1}{2}\sqrt{\frac{\sigma^2 \log(t)}{2t}} (\log(2/\alpha) + \log\log(t)).  
\end{align*}

\subsection{On the asymptotic width of the predictable plug-in Hoeffding CS}
\label{sec:hoeff}

For the curious reader, let us also analyze the asymptotic width of the natural CS for sub-Gaussian random variables, following the predictable plug-in approach. Let $(X_t)_{t\geq 1}$ share a constant conditional mean $\mu$ and be $\sigma$-sub-Gaussian. That is, $\E_{t-1}[\exp(\lambda(X_t - \mu)]\leq \exp(\lambda^2\sigma^2/2)$ for all $t\geq 1$. Then, a mildly generalized argument of \citet[Proposition 1]{waudby2024estimating} gives that 
\begin{equation}
    C_t^\hoeff := \bigg(\frac{\sum_{i\leq t}\lambda_i X_i}{\sum_{i\leq t}\lambda_i} \pm \frac{\log(2/\alpha) + \frac{\sigma^2}{2}\sum_{i\leq t}\lambda_i^2}{\sum_{i\leq t}\lambda_i}\bigg). 
\end{equation}
constitutes $(1-\alpha)$-CS for $\mu$. 
Let $W_t^\hoeff$ denote its halfwidth. A natural choice is 
\[\lambda_t = \sqrt{\frac{2\log(2/\alpha)}{\sigma^2t\log(t+1)}}.\]
Using the analysis in the previous section, it follows that 
\[\sum_{i\leq t} \lambda_i \sim 2\sqrt{\frac{2\log(2/\alpha) t}{\sigma^2\log t}},\]
and $\sum_{i\leq t} \lambda_i^2 \sim 2\log(2/\alpha) \log\log(t)/\sigma^2$. 
Therefore, with this choice of $\lambda_t$,
\begin{align*}
    W_t^\hoeff \sim \frac{1}{2} \sqrt{\frac{\sigma^2\log(2/\alpha)\log(t)}{2t}}(1 + \log\log(t)). 
\end{align*}
Note that this is exactly precisely the same width as $W_t^\wsr$. Of course, also like in the case of $W_t^\wsr$, one can get rid of $\alpha$ dependence in the limit by choosing $\lambda_t$ to be independent of $\alpha$. In this case, for any finite $t$, the width will depend on $\log(1/\alpha)$ instead of $\sqrt{\log(1/\alpha)}$.

\subsection{Comparison to Robbins' mixture}
\label{sec:robbins-mixture}

Suppose $(X_t)$ are $\sigma^2$-sub-Gaussian with conditional mean $\mu$. That is, 
\begin{equation*}
 \E[\exp(\lambda X_t - \lambda^2\sigma^2/2)|\calF_{t-1}]\leq 1 \text{~ for all ~} t\geq 1.    
\end{equation*} 
Then, for any $a>0$, one can show that $(C_t^\rob)$ is a $(1-\alpha)$-CS for $\mu$, where $C_t^\rob := (\Xbar_t \pm W_t^\rob)$ and  
\begin{equation}
    W_t^\rob := \sqrt{\frac{2(1 + at\sigma^2)}{at^2}\left(\log(1/\alpha) + \frac{1}{2}\log(1 + at\sigma^2)\right)}.
\end{equation}
This CS originates with \citet{robbins1970statistical} who was one of the pioneers of using the method of mixtures to obtain CSs via nonnegative supermartingales. We use the superscript ``Rob'' to refer to Robbins. This explicit CS can be found in \citet[Section 3.2]{howard2021time} and in \citet[Equation (6)]{waudby2024time}. A version for random vectors can be found in \citet{chugg2025time}. 

Let us compare $W_t^\rob$ to $W_t^\apx$ in Theorem~\ref{thm:closed-form}. Set $U_t^\circ := 1 + at\sigma^2$. Then 
\begin{equation}
    W_t^\rob = \sqrt{\frac{2 U_t^\circ}{t^2}\left(\log(1/\alpha) + \frac{1}{2}\log(U_t^\circ)\right)}.
\end{equation}
This bears a heavy resemblance to $W_t^\apx = \frac{2}{t}\sqrt{U_t(\ell_\alpha + \frac{1}{2}\log(2U_t))}$, though the latter has several additional constants due to fact that we use a \emph{truncated} Gaussian in the mixture. Note that $U_t^\circ \sim at\sigma^2$, so $\log(U_t^\circ) /\log(t) =1 + \oas(1)$. Therefore,  
\begin{align*}
    \sqrt{\frac{t}{\log t}} W_t^\rob = \sqrt{\frac{2U_t^\circ}{t}\left(\frac{\log(1/\alpha)}{\log t} + \frac{1}{2} + \oas(1)\right)}. 
\end{align*}
Observe that that this term has the same dependence on $\alpha$ as $R_t$ in~\eqref{eq:apx-asymp-ratio}. In particular, 
\begin{equation}
    \lim_{t\to\infty} \sqrt{\frac{t}{\log t}}W_t^\rob = \sigma, 
\end{equation}
which like $A_t^\apx$, also has no dependence on $\alpha$. 

\subsection{\texorpdfstring{On the relationship between $\psi_E$ and $\sigma^2$}{On the relationship between sub-exp and variance}}
\label{app:psiE-vs-sigma}

Let us first prove that $\E\psi_E(|X-\mu|)\leq C_\mu \sigma^2$ for some constant $C_\mu$ depending only on $\mu$.  
Let $r(x) = \sum_{k\geq 2} x^{k-2}/k$. 
Note that $\psi_E(x) = \sum_{k\geq 2}x^k/k = x^2r(x)$. Since $r$ is an increasing function of $x\geq 0$ and $|X-\mu|\leq \max\{ \mu,1-\mu\} =: g(\mu)$, we have 
\begin{align*}
    \E\psi_E(|X-\mu|) \leq \E\bigg(|X-\mu|)^2 \sum_{k\geq 2} \frac{g^{k-2}(\mu)}{k}\bigg) = \sigma^2 r(g(\mu)) = \sigma^2 \frac{\psi_E(g(\mu))}{g^2(\mu)}.  
\end{align*}
Next, we show that this upper bound is achievable. For a fixed $\mu<1/2$ consider $X = 1$ with probability $\eps$ and $X = \frac{\mu-\eps}{1-\eps}$ with probability $1-\eps$. Then $\E X= \mu$ 
and $\E(X- \mu)^2 = g^2(\mu)\eps + (\frac{\mu-\eps}{1-\eps} - \mu)^2(1-\eps) = g^2(\mu) \eps + \eps^2 g^2(\mu)/(1-\eps)$. Also, 
$\E\psi_E(|X-\mu|) = \psi_E(1-\mu)\eps + \psi_E(g(\mu)\eps/(1-\eps))(1-\eps)$. 
So, 
\begin{align*}
\frac{\E\psi_E(|X-\mu|)}{\sigma^2} &= \frac{\eps\psi_E(g(\mu)) + \psi_E(\frac{g(\mu)\eps}{1-\eps})(1-\eps)}{\eps g^2(\mu) + \eps^2 g^2(\mu)/(1-\eps)} \\
&= \frac{\psi_E(g(\mu)) + \psi_E(\frac{g(\mu)\eps}{1-\eps})\frac{1-\eps}{\eps}}{ g^2(\mu) + \eps g^2(\mu)/(1-\eps)} \xrightarrow{\eps\to 0} \frac{\psi_E(g(\mu)}{g^2(\mu)}.    
\end{align*}

To give a sense of how the worst case difference between $\E\psi_E(|X-\mu|)$ and $\sigma^2$ affects our bounds, Figure~\ref{fig:psiE-vs-sigma} plots $C_t^\apx$ against the oracle Bernstein bound of \citet{howard2021time} for this two-point distribution. We find that $\eps=0.01$ maximizes the differences between the two CSs.

\begin{figure}
    \centering
    \begin{subfigure}{0.4\textwidth}
         \includegraphics[width=0.9\linewidth]{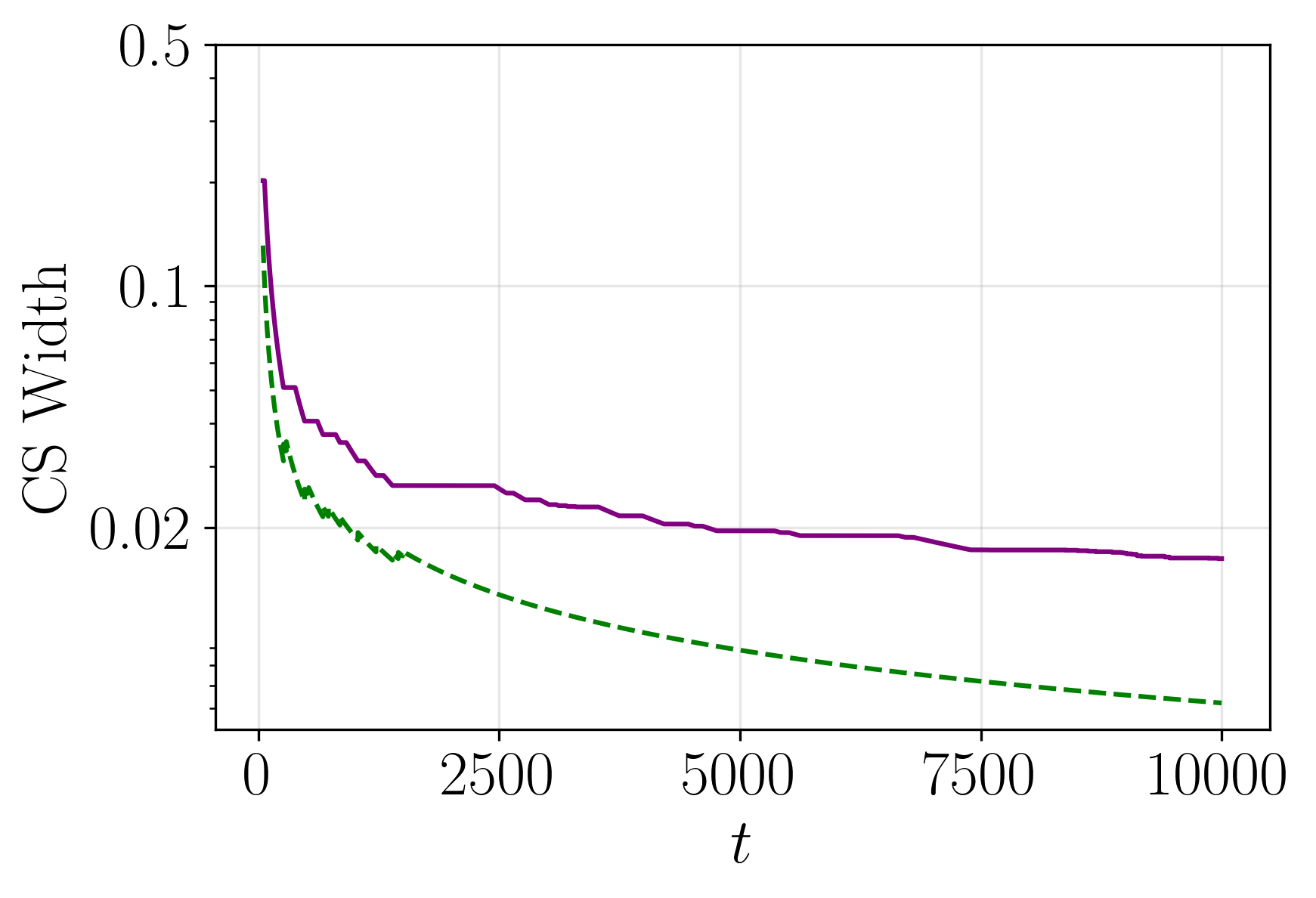}     
    \end{subfigure}
    \begin{subfigure}{0.4\textwidth}
         \includegraphics[width=0.9\linewidth]{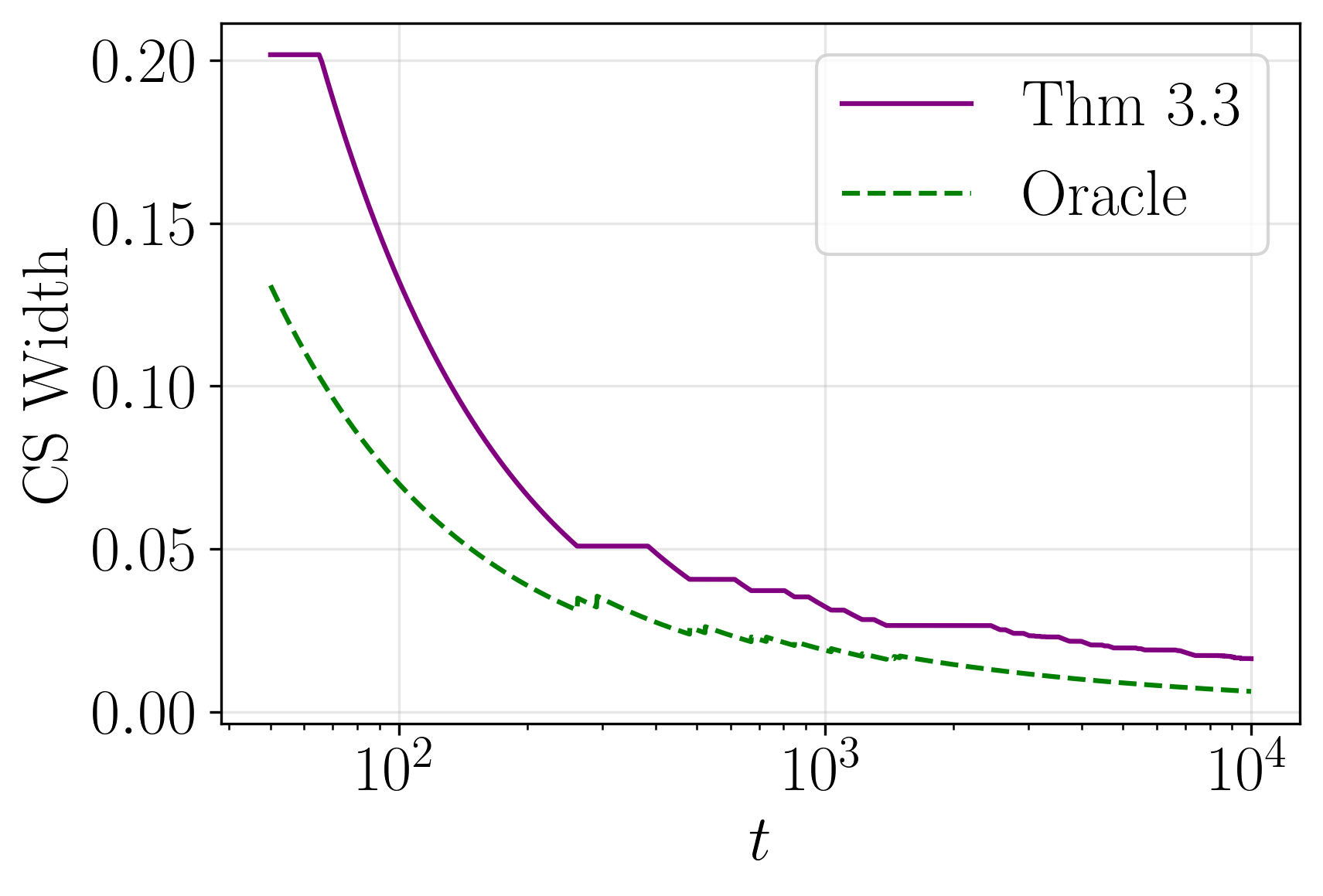}     
    \end{subfigure}
    \caption{Comparison of $C_t^\apx$ against the oracle Bernstein Bern on the two point distribution discussed in Appendix~\ref{app:psiE-vs-sigma} which maximizes the ratio $\E\psi_E(|-\mu|)/\sigma^2$. We use $\eps=0.01$ and $\alpha=0.05$. For $C_t^\apx$ we use $\kappa=0.25$ as usual.}
    \label{fig:psiE-vs-sigma}
\end{figure}

\subsection{On the width of two oracle Bernstein CSs}
\label{sec:oracle-bernstein}

Here we comment on two oracle (i.e., non-empirical) Bernstein CSs, both given by \citet{howard2021time}. The first uses the method of mixtures and has width scaling as $O(\sqrt{\log(t)/t})$, and the second uses stitching (see Section~\ref{sec:stitching}) and has width $O(\sqrt{\log\log(t)/t})$. Here we examine their asymptotic widths (multiplied by $\sqrt{\log(t)/t}$ and $\sqrt{\log\log(t)/t}$ respectively), showing that they are both asymptotically sharp, as defined in the introduction. That is, the mixture bound has limiting with $\sigma$ and the stitched bound has limiting width $\sqrt{2\sigma^2}$. 

To present both bounds, first let us recall that a bounded random variable $X\in[0,1]$ satisfies \emph{Bernstein's} condition with parameter $c=1$. That is, $\E[|X|^k]\leq \frac{k!}{2}\sigma^2$. 
In the language of \citet{howard2021time} this implies that the process $(\sum_{i\leq t} (X_i - \E_{i-1}[X_i]))_{t\geq 1}$ is sub-gamma with $c=1$ and variance proxy $V_t = t\sigma^2$. We can thus apply both the gamma-exponential conjugate mixture boundary \citet[Proposition 9]{howard2021time} and the stitched boundary \citet[Theorem 1]{howard2021time} to obtain two CSs for $\mu_t$. 

Let us first consider the conjugate mixture boundary. 
\citet[Proposition 2]{howard2021time} implies that the width $W_t$ of this CS behaves as 
\[W_t = \sqrt{\frac{\sigma^2}{t} \log\left(\frac{t\sigma^2}{2\pi\alpha f^2(0)}\right) + o\left(\frac{1}{t}\right)},\]
where $f$ is the density of the gamma distribution with respect to the Lebesgue measure (see Equation (66) in Howard et al.\ for the precise parameters of the gamma distribution used to construct the CS). Similarly to the analysis done in Section~\ref{sec:robbins-mixture}, we see that $W_t$ obeys $\lim_{t\to\infty} \sqrt{t/\log(t)} \cdot W_t = \sigma$. 

Next, the stitched boundary behaves similarly to the CS presented in Section~\ref{sec:sota}, but with the variance proxy $V_t = t\sigma^2$ instead of the empirical variance. As was done in Section~\ref{sec:stitching}, we can take limits and then let $\eta,s\to 1$ to see that the width $W_t$ obeys $\sqrt{t/\log\log(t)} W_t \to \sqrt{2\sigma^2}$ as $t\to\infty$. Note that, since we're letting $\eta$ and $s$ approach 1, it's really a \emph{sequence} of such bounds that has limiting width $\sqrt{2\sigma^2}$. The limit is not achieved by any one bound. 
This is also demonstrated by \citet[Corollary 1]{howard2021time}.

\section{Additional experiments and simulation details}
\label{app:experiments}

\begin{figure}[t]
	\centering
    \begin{subfigure}[t]{0.40\linewidth}
        \includegraphics[height=4.5cm]{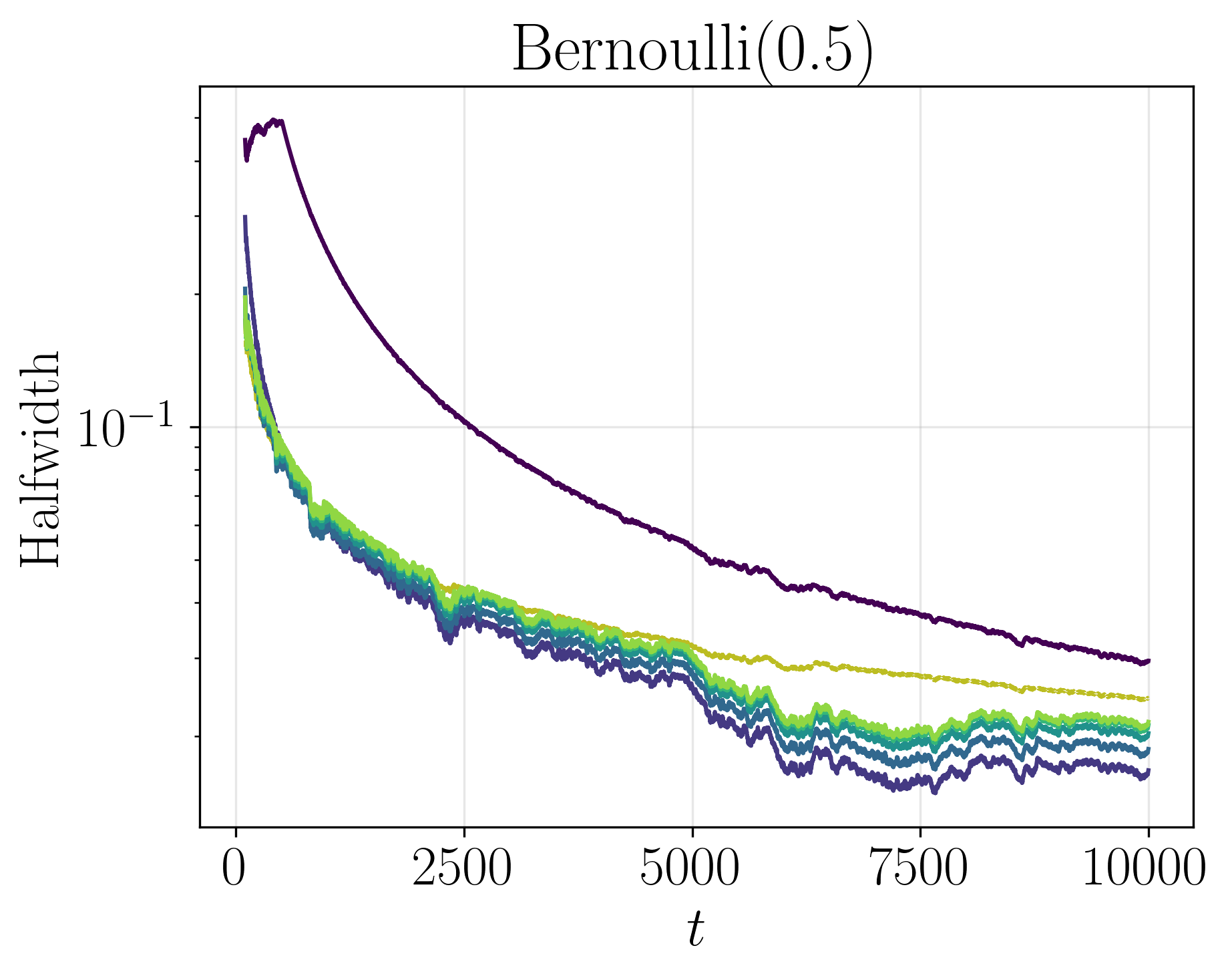}
        % \caption{}
    \end{subfigure}
    \begin{subfigure}[t]{0.59\linewidth}
        \includegraphics[height=4.5cm]{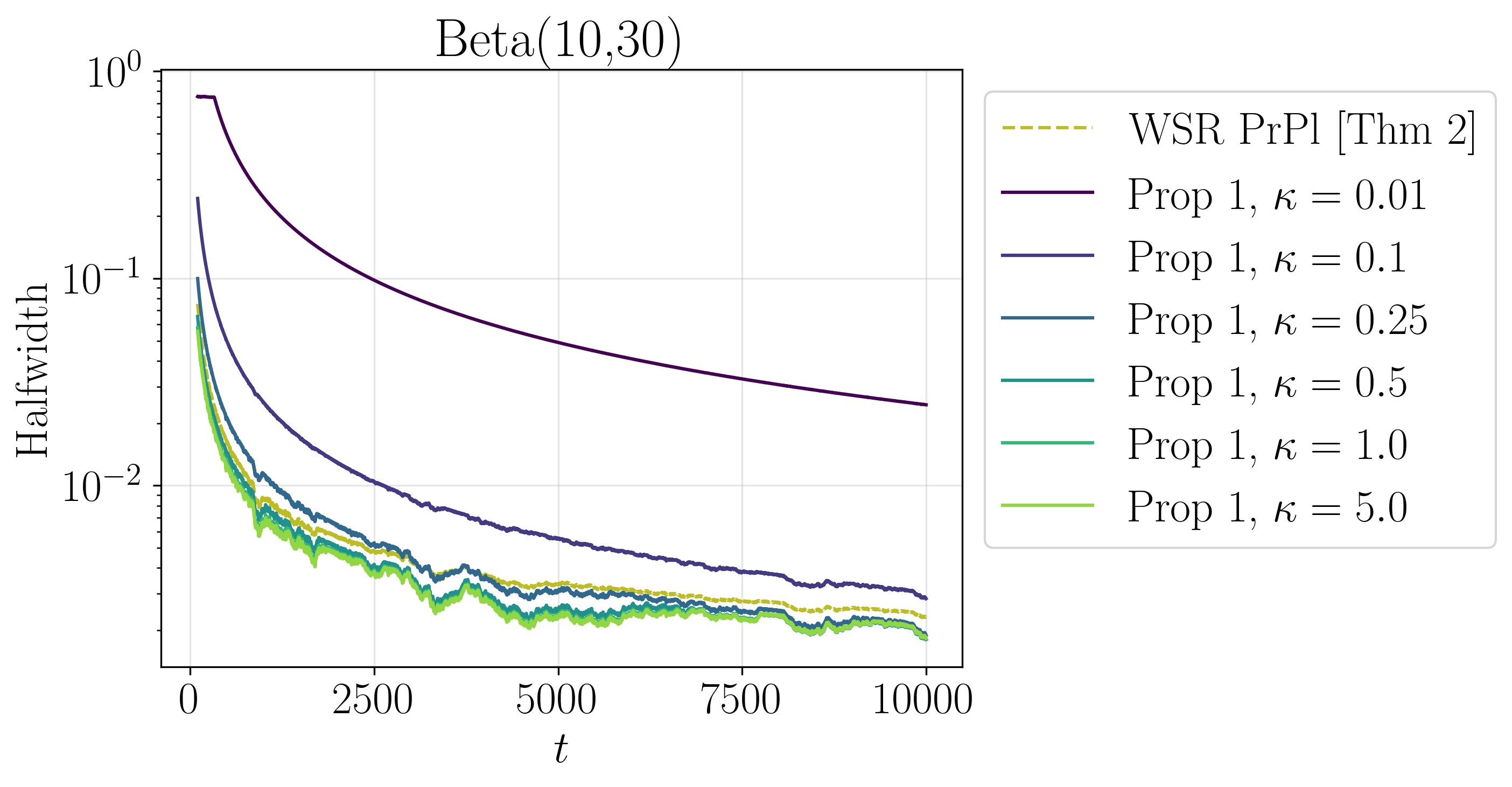}
        % \caption{}
    \end{subfigure}    
	\caption{The effect of $\kappa$ on the width of $C_t^\mix$. We compare the resulting bounds with the empirical Bernstein bound of \citet{waudby2024estimating}. Observations were drawn iid from either Ber(0.5) or $\text{ Beta(10,30)}$. We use $\alpha=0.01$.} 
	\label{fig:effect-of-kappa}
\end{figure}

Code to implement our CSs and replicate the figures can be found at \url{https://github.com/bchugg/empirical-bernstein}. To implement the CSs of \citet{waudby2024estimating} and \citet{howard2021time} we use the confseq package~\citep{howardconfseq}, which can be found at \url{https://github.com/gostevehoward/confseq}. At the time of this writing, the latter package requires Python 3.10 or lower. 

Here we present some additional experiments to get a handle on the effect of $\kappa$ in Proposition~\ref{prop:mixture-cs-tn} and Theorem~\ref{thm:closed-form}. Figure~\ref{fig:effect-of-kappa} plots the effect of $\kappa$ on the shape of $C_t^\mix$. We see that $\kappa$ can have a noticeable effect on the width of the CS, but that as $\kappa$ grows the widths of the bounds converge. This is predicted by the analysis below, and also by Remark~\ref{rem:alternative-dists}. Unfortunately, for small values of $\kappa$, the effect of $\kappa$ on the width is frustratingly difficult to predict. As demonstrated by the distinction between the  Bernoulli and Beta distribution in the figure, small values of $\kappa$ for some distributions, but have the best performance for others. As we noted in the main paper, we find that $\kappa=0.25$ is a good default choice across all distributions. 

We also emphasize that $\kappa$ cannot be data-dependent for the bound to retain its validity. That is, a practitioner should not compute the bound multiple times with different values of $\kappa$ and then take the tightest one. $\kappa$ should either be fixed at the beginning of the experiment or chosen based on a holdout dataset. If a practitioner \emph{does} recompute the bound multiples times with different values of $\kappa$, then such selection effects should be taken into account with a Bonferroni correction. 

Finally, let us note that we can get a handle on the effect of $\kappa$ analytically by noting that $\kappa$ affects the width of the boundary through $\ell_\alpha$ and the term $\kappa Z$, where $Z = \Phi(1/\kappa) - \Phi(-1/\kappa) = \erf(1/\sqrt{2\kappa^2})$. The Maclaurin series of the error function is $\erf(z) = \frac{2}{\sqrt{\pi}}( z - z^3/3 + z^5/10 - z^7/42 + \dots)$, hence for $\kappa$ large ($z$ small) we have $\kappa Z \sim 2/\sqrt{\pi}$, which is independent of $\kappa$. This is, of course, consistent with the fact that as $\kappa\to\infty$, a truncated Gaussian approaches a uniform distribution.

\end{document}